\documentclass[a4paper,12pt]{article}
\usepackage[T1]{fontenc}
\usepackage{iwona,amsmath,amsthm,amsfonts,amssymb,microtype,thmtools,underscore,mathtools,anyfontsize,float} 
\usepackage[usenames,dvipsnames,svgnames,table]{xcolor}
\usepackage[unicode=true]{hyperref}
\hypersetup{%
colorlinks,
breaklinks=true,
linkcolor={black},
citecolor={black},
urlcolor={blue!60!black},
pdftitle={Tree-Partitions and Small-Spread Tree-Decompositions},
pdfauthor={Distel--Kaul--Kaul--Wood}
}
\usepackage[noabbrev,capitalise]{cleveref}
\crefname{lem}{Lemma}{Lemmas}
\crefname{thm}{Theorem}{Theorems}
\crefname{cor}{Corollary}{Corollaries}
\crefname{prop}{Proposition}{Propositions}
\crefname{conj}{Conjecture}{Conjectures}
\crefname{ques}{Question}{Questions}
\crefname{rmk}{Remark}{Remarks}
\crefname{openproblem}{Open Problem}{Open Problems}
\crefformat{equation}{(#2#1#3)}
\Crefformat{equation}{Equation #2(#1)#3}

\usepackage[shortlabels]{enumitem}
\setlist[itemize]{topsep=0ex,itemsep=0ex,parsep=0.4ex}
\setlist[enumerate]{topsep=0ex,itemsep=0ex,parsep=0.4ex}
\usepackage[longnamesfirst,numbers,sort&compress]{natbib}
\makeatletter
\def\NAT@spacechar{~}
\makeatother
\usepackage[margin=30mm]{geometry}
\renewcommand{\baselinestretch}{1.065}
\allowdisplaybreaks
\usepackage{tikz}
\usetikzlibrary{positioning}
\usetikzlibrary{patterns}
\newcommand{\defn}[1]{\textcolor{Maroon}{\emph{#1}}}
\newcommand{\mathdefn}[1]{\textcolor{Maroon}{#1}}

\DeclarePairedDelimiter{\ceil}{\lceil}{\rceil}

\renewcommand{\geq}{\geqslant}
\renewcommand{\leq}{\leqslant}
\renewcommand{\emptyset}{\varnothing}
\newcommand{\TT}{\mathcal{T}}
\DeclareMathOperator{\dist}{dist}

\DeclareMathOperator{\tw}{tw}
\DeclareMathOperator{\dtw}{dtw}
\DeclareMathOperator{\tpw}{tpw}
\DeclareMathOperator{\pw}{pw}

\renewcommand{\thefootnote}{\fnsymbol{footnote}}
\allowdisplaybreaks
\theoremstyle{plain}
\newtheorem{thm}{Theorem}
\newtheorem{lem}[thm]{Lemma}
\newtheorem{cor}[thm]{Corollary}
\newtheorem{prop}[thm]{Proposition}
\newtheorem{ques}[thm]{Question}

\newtheorem{obs}[thm]{Observation}
\theoremstyle{definition}

\begin{document}
\title{\bf\Large Tree-Partitions and Small-Spread Tree-Decompositions\footnote{This work was initiated at the \href{https://www.matrix-inst.org.au/events/structural-graph-theory-downunder-ll/}{Structural Graph Theory Downunder II} workshop at the Mathematical Research Institute MATRIX (March 2022). Preliminary versions of this paper, only with the results for tree-partitions (and with weaker constants than in \cref{ImprovedTreePartition}), were published as 
``Tree-Partitions with Bounded Degree Trees'' in the 
\href{https://www.matrix-inst.org.au/2021-matrix-annals/}{2021-22 MATRIX Annals}, and as ``Tree-Partitions with Small Bounded Degree Trees'' on the arXiv~\citep{DW22a}.}}
\author{Marc Distel\,\footnotemark[3] 
\qquad Neel Kaul\,\footnotemark[3]  
\qquad Raj Kaul\,\footnotemark[3]  
\qquad David~R.~Wood\,\footnotemark[3]}
\footnotetext[3]{School of Mathematics, Monash University, Melbourne, Australia (\texttt{\{marc.distel,neel.kaul,raj.kaul,david.wood\}@monash.edu}). Research of D.W.\ supported by the Australian Research Council and by NSERC. Research of M.D., N.K.\ and R.K.\ is supported by an Australian Government Research Training Program Scholarship.}

\maketitle
\vspace*{-2ex}


\begin{abstract}
Tree-decompositions and treewidth are of fundamental importance in structural and algorithmic graph theory. The \defn{spread} of a tree-decomposition is the minimum integer $s$ such that every vertex lies in at most $s$ bags. A tree-decomposition is \defn{domino} if it has spread 2, which is the smallest interesting value of spread. So that spread 1 becomes interesting, one can relax the definition of tree-decomposition to \defn{tree-partition}, which allows the endpoints of each edge to be in the same bag or adjacent bags, while demanding that each vertex appears in exactly one bag. Ding and Oporowski [1995] showed that every graph $G$ with treewidth $k$ and maximum degree $\Delta$ has a tree-partition with width $O(k\Delta)$. We prove the same result with an improved constant, and with the extra property that the underlying tree has maximum degree $O(\Delta)$ and $O(|V(G)|/k\Delta)$ vertices. This result implies (with an improved constant) the best known upper bound on the domino treewidth of $O(k\Delta^2)$, due to Bodlaender [1999]. Moreover, solving an open problem of Bodlaender, we show this upper bound is best possible, by exhibiting graphs with domino treewidth $\Omega(k\Delta^2)$ for $k\geq 2$. On the other hand, allowing the spread to be a function of $k$, we show that width $O(k\Delta)$ can be achieved. This result exploits a connection to chordal completions, which we show is best possible, a result of independent interest.
\end{abstract}

\renewcommand{\thefootnote}{\arabic{footnote}}

\newpage

\section{Introduction}
\label{Intro}

\subsection{Tree-Decompositions} 

A \defn{tree-decomposition} of a graph $G$ is a collection $(B_x\subseteq V(G):x\in V(T))$ of subsets of $V(G)$ (called \defn{bags}) indexed by the nodes of a tree $T$, such that: (a) for every edge $uv\in E(G)$, some bag $B_x$ contains both $u$ and $v$; and (b) for every vertex $v\in V(G)$, the set $\{x\in V(T):v\in B_x\}$ induces a non-empty subtree of $T$. Properties (a) and (b) are respectively called the `edge-property' and `vertex-property' of tree-decompositions. The \defn{width} of a tree-decomposition is the maximum size of a bag,
minus $1$. The \defn{treewidth} of a graph $G$, denoted by \defn{$\tw(G)$}, is the minimum width of a tree-decomposition of $G$. Treewidth is the standard measure of how similar a graph is to a tree. Indeed, a connected graph has treewidth 1 if and only if it is a tree. Treewidth is of fundamental importance in structural and algorithmic graph theory; see \citep{Reed03,HW17,Bodlaender98} for surveys.

While width is the most important property of tree-decompositions, several other properties are also studied, such as
chromatic number of the bags~\citep{Seymour16,HK17,BFMMSTT19,HRWY21}, independence number of the bags~\citep{Yolov18,DMS21}, diameter of the bags \citep{CDN16,Lokshtanov10,BS24,DG07},
or treewidth of the bags~\citep{LNW,HW25c}.

We start by considering the following property. The \defn{spread} of a vertex $v$ in a tree-decomposition is the number of bags that contain $v$. Several papers have studied the spread of vertices in tree-decompositions~\citep{Wood25,DO95,BG26,BodEng-JAlg97,Norin26} and in path-decompositions~\citep{DM04a,DM04b,DM05,DM07} (where it is called \defn{persistence}). 

Note that in a tree-decomposition of width $k$, if a vertex $v$ has spread $s$ then $\deg(v)\leq ks$. Thus, it is necessary that the width or the spread depends on (the maximum) degree. This paper explores trade-offs between the spread and the width in tree-decompositions of graphs with given treewidth and given maximum degree. 

In a tree-decomposition of a connected graph, if every vertex has spread 1, then the edge-property implies that there is only one bag. So spread 2 is the smallest interesting value. We return to tree-decompositions with spread 2 in \cref{IntroSmallSpread}. The next definition relaxes the edge-property of tree-decompositions by allowing adjacent vertices to be in adjacent bags, so that `spread 1' becomes interesting.

\subsection{Tree-Partitions}

For a graph~$G$ and a tree~$T$, a \defn{$T$-partition} of~$G$ is a partition~${(V_x \colon x\in V(T))}$ of~${V(G)}$ indexed by the nodes of~$T$, such that for every edge~${vw}$ of~$G$, if~${v \in V_x}$ and~${w \in V_y}$, then~${x = y}$ or~${xy \in E(T)}$. The \defn{width} of a $T$-partition is~${\max\{ |{V_x}| \colon x \in V(T)\}}$. The \defn{tree-partition-width}\footnote{Tree-partition-width has also been called \defn{strong treewidth} \citep{BodEng-JAlg97,Seese85}.} of a graph $G$, denoted by \defn{$\tpw(G)$}, is the minimum width of a tree-partition of $G$. 

Tree-partitions were independently introduced by \citet{Seese85} and \citet{Halin91}, and have since been widely investigated \citep{Bodlaender-DMTCS99,BodEng-JAlg97,DO95,DO96,Edenbrandt86, Wood06,Wood09,BGJ22}. Applications of tree-partitions include graph drawing~\citep{CDMW08,GLM05,DMW05,DSW07,WT07}, 
graphs of linear growth~\citep{CDGHHHMW23}, 
nonrepetitive graph colouring~\citep{BW08}, 
clustered graph colouring~\citep{ADOV03,LO18}, 
monadic second-order logic~\citep{KuskeLohrey05}, 
network emulations~\citep{Bodlaender-IPL88, Bodlaender-IC90, BvL-IC86, FF82}, 
size Ramsey numbers~\citep{DKCPS,KLWY21}, 
and the edge-{E}rd{\H{o}}s-{P}{\'o}sa property~\citep {RT17,GKRT16,CRST18}. 
The key property in all these applications is that each vertex appears only once in the tree-partition (unlike in tree-decompositions). Tree-partitions are also related to graph product structure theory since a graph $G$ has a $T$-partition of width at most $k$ if and only if $G$ is isomorphic to a subgraph of $T\boxtimes K_k$ for some tree $T$; see~\citep{UTW,DJMMW24,DEMWW22} for example. 

Bounded tree-partition-width implies bounded treewidth, as noted by \citet{Seese85}. In particular, for every graph~$G$,
\[\tw(G) \leq 2\tpw(G)-1.\]
Of course, ${\tw(T) = \tpw(T) = 1}$ for every tree~$T$. But in general, $\tpw(G)$ can be much larger than~$\tw(G)$. For example, fan graphs on~$n$ vertices have treewidth~2 and tree-partition-width~$\Omega(\sqrt{n})$. 
On the other hand, the referee of \citep{DO95} showed that if the maximum degree and treewidth are both bounded, then so is the tree-partition-width, which is one of the most useful results about tree-partitions\footnote{\cref{TreeProduct} is stated in \cite{DO95} with ``$\tw(G)$'' instead of ``$\tw(G)+1$'', but a close inspection of the proof shows that ``$\tw(G)+1$'' is needed.}. A graph $G$ is \defn{non-trivial} if $E(G)\not=\emptyset$. Let \defn{$\Delta(G)$} be the maximum degree of a vertex of $G$. 
 
\begin{thm}[\cite{DO95}]
\label{TreeProduct}
For any non-trivial graph $G$,
$$\tpw(G) \leq 24 (\tw(G)+1)\Delta(G).$$
\end{thm}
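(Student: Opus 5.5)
We prove the stronger inductive statement that is standard for such results: for every graph $G$ with $k:=\tw(G)+1$ and $\Delta:=\Delta(G)\geq 1$, and every set $S\subseteq V(G)$ with $|S|\leq c_1 k\Delta$ (for a suitable constant $c_1$), there is a $T$-partition of $G$ of width at most $c_2 k\Delta$ in which $S$ is contained in a single part $V_r$ (the ``root'' part). Applying this with $S=\emptyset$ (or a single vertex) gives the theorem. We may assume $G$ is connected, since tree-partitions of components can be joined by adding edges between the trees. The induction is on $|V(G)|$. If $|V(G)|\leq c_2k\Delta$, put everything in one bag.

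Otherwise, first grow $S$ so that it is large. If $|S|<c_1k\Delta$, add vertices to $S$ (for instance neighbours of $S$, or arbitrary vertices if $S=\emptyset$) until $|S|$ is between roughly $\tfrac12 c_1k\Delta$ and $c_1k\Delta$. This is possible since $|V(G)|$ is large. Now use the standard balanced-separator lemma for treewidth: there is a set $X$ with $|X|\leq k$ such that every component of $G-X$ contains at most $\tfrac12|S|$ vertices of $S$. Let the root part be $V_r:=S\cup X$, of size at most $c_1k\Delta+k$. For each component $C$ of $G-V_r$, let $S_C:=N(V_r)\cap V(C)$, the neighbours of the root part in $C$.

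The key step is bounding $|S_C|$. Each vertex of $S_C$ is adjacent to a vertex of $V_r\cap(N(C))$. The vertices of $X$ contribute at most $k\Delta$ neighbours in total. The vertices of $S$ adjacent to $C$ all lie in the unique component of $G-X$ containing $C$, so there are at most $\tfrac12|S|$ of them, contributing at most $\tfrac12|S|\Delta$ neighbours. This bound is too weak because of the factor $\Delta$. So the naive accounting fails, and this is the main obstacle.

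The fix I would try is to change what is inducted on. Require $|S|\leq c_1k\Delta$, and place into the root bag not $S$ itself but $S\cup X$, while passing to each child the \emph{next layer}, so that the child part's root is $S_C$. The separator is then chosen for $S$ weighted so that each component of $G-X$ sees at most half of $S$. Then $|S_C|\leq \Delta(\tfrac12|S|+k)\cdots$, which still has the extra $\Delta$.

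Alternatively, and this is the version I would commit to, take the tree-partition to be built from BFS layers. Root the parts at $S$, let the children be the components $C$ with new seed set $S_C$, and choose the separator \emph{for $S_C$-type sets} in the previous step. Each child seed is then at most $|X|\Delta+|S\cap N(C)|\cdot\Delta$. If instead we balance so that $S$ is split with $X$ of size $k$ chosen relative to the seed set itself, each recursion makes the seed set shrink by half and grow by at most $k\Delta$. Choosing $c_1=24$ then yields $|S_C|\leq \tfrac12 c_1k\Delta+k\Delta\leq c_1k\Delta$ for $c_1\geq 2$, with parts of size $|S|+|X|\leq 24k\Delta+k$. Making this accounting fully consistent is where I expect the delicate work, and the constant $24$ comes out of it.
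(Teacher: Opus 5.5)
There is a genuine gap, and it sits exactly at the obstacle you identified in your third paragraph. Your final accounting $|S_C|\leq \tfrac12 c_1k\Delta+k\Delta$ is false. Since $S$ is placed in the root part, the seed handed to a component $C$ of $G-V_r$ is not a subset of $S$. It is the set of neighbours in $C$ of the attachment set $N(V(C))\cap(S\cup X)$. A single balancing bag only gives $|N(V(C))\cap(S\cup X)|\leq \tfrac12|S|+k$, hence $|S_C|\leq\Delta(\tfrac12|S|+k)=\Theta(k\Delta^2)$ when $|S|=\Theta(k\Delta)$. You have dropped the factor $\Delta$ on the first term, so the seed does not ``shrink by half,'' and rebalancing with respect to some other set does not change this.

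This is not a bookkeeping problem. In any $T$-partition with $S\subseteq V_r$, each component $C$ of $G-V_r$ lies in the subtree of a single child $y$ of $r$, and all of $N(V_r)\cap V(C)$ must lie in $V_y$. So the root part itself must ensure that every component of $G-V_r$ contains only $O(k\Delta)$ neighbours of $V_r$. With $|S|=\Theta(k\Delta)$ and one separator bag of size $k$, your argument gives nothing better than $\Theta(k\Delta^2)$. The constant $24$ therefore does not ``come out'' of your recursion.

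The missing idea is a much finer separator with controlled attachments. The paper derives the statement from the stronger \cref{ImprovedTreePartition}, using $8\Delta-3\leq 24\Delta$. In \cref{lem: NewTreePartitionWidthBound}, with $k=\tw(G)+1$, the argument runs as follows.
\begin{enumerate}[(1)]
\item Pad $S$ to $|S|=4k\Delta$.
\item Apply \cref{TreewidthSetSep} with $b=2\Delta-1$. This gives a union $X$ of $2\Delta-1$ bags such that each component of $G-X$ contains at most $|S|/(2\Delta)=2k$ vertices of $S$.
\item This alone is insufficient, because a component could attach to $\Theta(\Delta)$ of those bags, i.e.\ to $\Theta(k\Delta)$ separator vertices, which again gives $\Theta(k\Delta^2)$. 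So apply \cref{TwoBags} to enlarge $X$ to a union $Y$ of at most $4\Delta-3$ bags. Then each component $C$ of $G-Y$ has $N(V(C))\cap Y$ inside two bags, hence at most $2k$ such vertices.
\item Now every component of $G-(S\cup Y)$ contains at most $(2k+2k)\Delta=4k\Delta$ neighbours of the root part $S\cup Y$. The root part has size at most $4k\Delta+(4\Delta-3)k=k(8\Delta-3)$, so the induction closes.
\end{enumerate}
Both ingredients are absent from your proposal: splitting $S$ into $O(k)$-size pieces using $\Theta(\Delta)$ bags, and bounding each component's attachment to the separator by $O(k)$.
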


\citet{Wood09} showed that \cref{TreeProduct} is best possible up to the multiplicative constant, and also improved the constant $24$ to $9+6\sqrt{2}\approx 17.48$. 

Our first result improves the constant in \cref{TreeProduct} to 8, where the tree $T$ indexing the tree-partition has two extra properties, namely that $\Delta(T)$ and $|V(T)|$ are small. 

\begin{restatable}{thm}{ImprovedTreePartition}
\label{ImprovedTreePartition}
For any integers $k,d\geq 1$, every graph with treewidth at most $k-1$ and  maximum degree at most $d$ has a $T$-partition of width at most $k(8d-3)$, for some tree $T$ with $\Delta(T)\leq 4d-1$ and $|V(T)|\leq \ceil{\frac{|V(G)|}{kd}}$. In particular, for any non-trivial graph $G$,
\[ \tpw(G) \leq (\tw(G)+1)(8\Delta(G)-3).\]
\end{restatable}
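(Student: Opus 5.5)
We prove a stronger, inductive lemma and derive the theorem from it. The lemma mirrors the Ding--Oporowski/Wood approach with careful bookkeeping. Fix $k,d\geq1$ and let $G$ have treewidth at most $k-1$ and maximum degree at most $d$. The statement we prove by induction on $|V(G)|$ is:
\begin{quote}
For every set $S\subseteq V(G)$ with $|S|\leq k(4d-1)$ (say, with a slightly different threshold to be tuned), $G$ has a $T$-partition of width at most $k(8d-3)$ such that $S$ is contained in the bag of a root $r$ of $T$. Moreover, $\deg_T(r)\leq 2d-1$ (or some similar bound), $\Delta(T)\leq 4d-1$, and $|V(T)|\leq \max\{1,\lceil |V(G)|/kd\rceil\}$.
\end{quote}
The theorem follows by taking $S=\emptyset$ (or a single vertex).

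For the inductive step, first handle the base case. If $|V(G)|\leq k(8d-3)$, put everything in one bag. Otherwise, enlarge $S$ to a set $S'$ of size comparable to $k(4d-1)$, so that the root bag is not wastefully small; this is the mechanism that yields the bound on $|V(T)|$. Then use the standard separator lemma for treewidth. Since $\tw(G)\leq k-1$, there is a set $X$ with $|X|\leq k$ such that every component of $G-X$ contains at most half of $S'$. We place $S'\cup X$ into the root bag. Let $N$ be the set of neighbours of $S'\cup X$ outside it; then $|N|\leq d|S'\cup X|$. We group the components of $G-(S'\cup X)$ into groups $G_1,\dots,G_m$, and for each group recurse with $S_i:=N\cap V(G_i)$. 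The children roots then contain all of $N$, which guarantees the edge property.

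The key quantitative steps are as follows. (i) Combine components greedily so that each group has $|S_i|$ at most the threshold $k(4d-1)$, while the number of groups $m$ (the degree of the root) is bounded. Each component's portion of $S'$ is halved, which controls its share of $N$. (ii) Check that the root bag has size at most $|S'|+k\leq k(8d-3)$, and that the total degree of a non-root node is at most $1+m\leq 4d-1$. (iii) Check the node count. Each non-leaf bag has at least $kd$ vertices, after arranging that recursion only occurs on graphs with many vertices and that small leftover pieces are merged into leaves. Summing over the subtrees then gives $|V(T)|\leq\lceil |V(G)|/kd\rceil$.

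I expect step (i), with the arithmetic of the constants, to be the main obstacle. We must choose the thresholds so that $|N\cap G_i|\leq$ threshold after splitting, while also having $m\leq 4d-2$ children. A plain greedy bin packing of the components by their $N$-weight, with bins of capacity about $k(4d-1)$ and total weight at most $d(|S'|+k)\approx k d(4d)$, gives roughly $2d$ bins. The factor $2$ is the standard greedy loss. I would try the balanced-separator variant first: split $S'$ in half weighted by neighbourhood mass, to get $S_i$ sizes bounded by half of $d|S'|$. I would then tune the threshold $k(4d-1)$ against the target $8d-3$ so that every inequality closes exactly.
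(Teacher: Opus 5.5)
Your architecture matches the paper's: root bag $S'\cup X$, children seeded with $N(S'\cup X)$, and components grouped into at most $4d-2$ groups. However, step (i) fails with the separator you chose, and no tuning of constants rescues it. A component $Q$ of $G-(S'\cup X)$ lies in one component $C$ of $G-X$. So the only available bound is $|N\cap V(Q)|\le d\,(|S'\cap V(C)|+|N(V(C))\cap X|)\le d(|S'|/2+k)$, and nothing in your setup prevents this from being attained.

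Grouping cannot help, because components of $G-B_r$ cannot be split between children: each must lie in a single child subtree. When the incoming $S$ has size equal to the threshold $\beta$, you would need $d(\beta/2+k)\le\beta$. This is false for every $\beta$ once $d\ge 2$. Your weighted variant, giving $|S_i|\le d|S'|/2$, fails for $d\ge 3$ for the same reason.

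To keep $|S_i|$ below the threshold, two things are needed. The $S$-part of each component must shrink by a factor of about $1/(2d)$. \emph{And} the number of separator vertices adjacent to each component must be $O(k)$ rather than $|X|$. The paper gets both:
\begin{enumerate}[(1)]
\item Pad $S$ to exactly $4kd$.
\item Apply \cref{TreewidthSetSep} with $b=2d-1$. This gives a union of $2d-1$ bags leaving at most $|S|/(2d)=2k$ vertices of $S$ in each component.
\item Apply \cref{TwoBags} to enlarge this to a union $Y$ of at most $4d-3$ bags such that, for each component $C$ of $G-Y$, $N(V(C))\cap Y$ lies in two bags. So it has at most $2k$ vertices.
\end{enumerate}
Hence $|S_C|\le d(2k+2k)=4kd$, matching the hypothesis $|S|\le 4kd$. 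Also $|B_r|\le 4kd+(4d-3)k=k(8d-3)$, which is exactly where the constant $8d-3$ comes from. Without the two-bag property, even a $\Theta(d)$-bag separator only gives $|S_C|\le d(2k+|Y|)=\Theta(kd^2)$.

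Step (iii) also needs a different mechanism. An inductive bound $|V(T)|\le\max\{1,\lceil |V(G)|/kd\rceil\}$ does not propagate, because the ceilings over the $m$ subtrees accumulate. Your sketch also does not say how to control many tiny leaves, which a lower bound on non-leaf bags does not address.

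The paper instead carries two invariants through the induction: every bag with at most $2kd$ vertices is a leaf, and every node has at most one such child. Both come from choosing the grouping $G_1,\dots,G_m$ with $m$ minimum. Write $\gamma(G_i)=|V(G_i)\cap N(S\cup Y)|$; minimality forces $\gamma(G_i)+\gamma(G_j)>4kd$ for $i\ne j$. The same inequality, summed over pairs, gives $m<d|S\cup Y|/(2kd)\le 4d-\tfrac32$. Consequently the small nodes are matched to their (non-small) parents, so at least half the nodes have more than $2kd$ vertices. A direct count at the end then gives $|V(T)|\le|V(G)|/kd$.
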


We now elaborate on the two extra properties of tree-partitions in \cref{ImprovedTreePartition}. 

First consider the maximum degree of $T$: For any tree-partition $(B_x:x\in V(T))$ of a graph $G$ with width $k$, for each node $x\in V(T)$, there are at most $\sum_{v\in B_x}\deg(v)$ edges between $B_x$ and $G-B_x$. Thus we may assume that $\deg_T(x)\leq |B_x|\Delta(G)\leq k\Delta(G)$, otherwise delete an `unused' edge of $T$ and add an edge to $T$ between leaf vertices of the resulting component subtrees. It follows that if $\tpw(G)\leq k$, then $G$ has a $T$-partition of width at most $k$ for some tree $T$ with maximum degree at most $\max\{k\Delta(G),2\}$. By \cref{TreeProduct}, every graph $G$ has a $T$-partition of width at most $24 (\tw(G)+1)\Delta(G)$ for some tree $T$ with maximum degree at most $24 (\tw(G)+1)\Delta(G)^2$. This fact has been used in several applications of \cref{TreeProduct} (see \citep{CDMW08,DSW07} for example). \cref{ImprovedTreePartition}, which is proved in \cref{TreePartitionProof},  enables a $\tw(G)\Delta(G)^2$ term to be replaced by a $\Delta(G)$ term in the aforementioned applications. Also note that the linear upper bound on $\Delta(T)$ in \cref{ImprovedTreePartition} is best possible even for trees (see \cref{LowerBound} in \cref{TPW-LowerBound}).

Now consider the second property, the number of vertices in $T$. This property is motivated by results about size-Ramsey numbers by \citet{DKCPS}, where tree-partitions of $n$-vertex graphs with $\tw(G)\in O(\sqrt{n})$ play a critical role, and it is essential that $|V(T)| \ll |V(G)|$ and $\Delta(T)$ is independent of $\tw(G)$. 
In \cref{ImprovedTreePartition}, 
$|V(T)|\ll |V(G)|$ when $\tw(G)$ is unbounded, and $\Delta(T)$ is independent of $\tw(G)$. Thus \cref{ImprovedTreePartition} is suitable for such applications to size-Ramsey numbers, in particular, \cref{ImprovedTreePartition} improves upon a similar result of \citet[Lemma~2.1]{DKCPS} which has an extra $O(\log n)$ factor in the width. 
Finally, note that the bound, $|V(T)| \leq O( \frac{|V(G)|}{kd})$, in \cref{ImprovedTreePartition} is best possible if the width is $O(kd)$, since $|V(G)|\leq w|V(T)|$ in any $T$-partition of width~$w$.


Here we give an example of \cref{ImprovedTreePartition}. \citet{AST90} proved that every $K_t$-minor-free $n$-vertex graph $G$ has treewidth at most  $t^{3/2}n^{1/2}-1$. \cref{ImprovedTreePartition} thus implies:

\begin{cor}
\label{MinorClosedClass}
Every $K_t$-minor-free $n$-vertex graph with maximum degree $\Delta$ has a $T$-partition of width at most $8t^{3/2}\Delta n^{1/2}$, for some tree $T$ with $\Delta(T)\leq 4\Delta-1$ and $|V(T)|\leq n^{1/2}/\Delta t^{3/2}$.
\end{cor}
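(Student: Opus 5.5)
The plan is to apply \cref{ImprovedTreePartition} directly, feeding in the treewidth bound of \citet{AST90}. Let $G$ be a $K_t$-minor-free $n$-vertex graph with maximum degree $\Delta$. If $\Delta=0$, then $G$ is edgeless. Here I expect to handle it separately by grouping the vertices into bags indexed by a path, with bag sizes respecting the width bound; I have not checked whether the stated bound on $|V(T)|$ genuinely survives in this degenerate case. So assume $\Delta\geq 1$ and set $d:=\Delta$.

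The \citet{AST90} bound gives $\tw(G)\leq t^{3/2}n^{1/2}-1$. Since $\tw(G)$ is an integer, this means $\tw(G)\leq k-1$ for the integer $k:=\lfloor t^{3/2}n^{1/2}\rfloor\geq 1$. \cref{ImprovedTreePartition}, applied with these $k$ and $d$, yields a $T$-partition of $G$ with two properties. First, its width is at most $k(8\Delta-3)\leq 8k\Delta\leq 8t^{3/2}\Delta n^{1/2}$, which is the claimed width bound. Second, $\Delta(T)\leq 4\Delta-1$, exactly as claimed.

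For the number of nodes, \cref{ImprovedTreePartition} gives $|V(T)|\leq \lceil n/(k\Delta)\rceil$. Substituting $k\approx t^{3/2}n^{1/2}$ gives $n/(k\Delta)\approx n^{1/2}/(\Delta t^{3/2})$, which is the claimed bound.

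The only delicate step is rounding, and it is where I expect any real difficulty. Replacing $t^{3/2}n^{1/2}$ by its floor $k$ can only help the width bound. However, it slightly increases $n/(k\Delta)$, and the theorem also contributes an outer ceiling. So the bound on $|V(T)|$ that this argument actually delivers is $\lceil n/(\lfloor t^{3/2}n^{1/2}\rfloor\Delta)\rceil$. This exceeds $n^{1/2}/\Delta t^{3/2}$ by at most a lower-order rounding error (bounded by a constant factor whenever $t^{3/2}n^{1/2}\geq 1$, which always holds). I would therefore record the node bound in that form, or read the corollary's bound up to this rounding, as is implicit in its statement. If an exact inequality without rounding were required, I would go back into the proof of \cref{ImprovedTreePartition} to see whether its count of nodes, which is presumably $n/(kd)$ before rounding, holds for non-integral $k$. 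That would avoid having to round $t^{3/2}n^{1/2}$ at all.
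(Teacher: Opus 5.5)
Your argument is the paper's own: the corollary is obtained there in one line by inserting the Alon--Seymour--Thomas bound $\tw(G)\le t^{3/2}n^{1/2}-1$ into \cref{ImprovedTreePartition} with $d=\Delta$. Your width and $\Delta(T)$ bounds are correct. Taking $k=\lfloor t^{3/2}n^{1/2}\rfloor$ is in fact more careful than the paper, which plugs in a non-integral $k$.

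Your unease about the node count is justified, but it should be resolved differently from how you suggest. The inequality $|V(T)|\le n^{1/2}/(\Delta t^{3/2})$ cannot hold as literally written. Its right-hand side is below $1$ whenever $n<\Delta^2t^3$ (for example $G=K_2$, or any star, with $t=3$), while $|V(T)|\ge 1$. The paper's derivation simply drops the ceiling from \cref{ImprovedTreePartition}. So the statement must be read as $|V(T)|\le\lceil n^{1/2}/(\Delta t^{3/2})\rceil$, and no argument can remove that ceiling.

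What your proposed fallback can remove is the floor on $k$. The proofs of \cref{ImprovedTreePartition} and of the lemma behind it go through with only cosmetic changes for real $k\ge 1$ (pad $S$ to $\lfloor 4kd\rfloor$ vertices). This yields $|V(T)|\le\max\{1,n/(kd)\}$ with $k=t^{3/2}n^{1/2}$. The term $1$ comes only from the case where the root bag has at most $2kd$ vertices, so that $T$ is a single node.

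For the same reason, your claim that the rounding loss is bounded by a constant factor is false. The floor costs at most a factor $2$, but the ceiling is an additive $+1$. That is arbitrarily large relative to $n^{1/2}/(\Delta t^{3/2})$ when the latter is small.

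Finally, $\Delta=0$ cannot be rescued by grouping vertices along a path. In that case the claimed width would be $0$ and the claimed bound would be $\Delta(T)\le -1$. That case is simply excluded, matching the hypothesis $d\ge 1$ of \cref{ImprovedTreePartition}.
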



\subsection{Tree-Decompositions with Small Spread}
\label{IntroSmallSpread}

A tree-decomposition is \defn{domino} if every vertex has spread at most 2. The \defn{domino treewidth} of a graph $G$, denoted \defn{$\dtw(G)$}, is the minimum width of a domino tree-decomposition of $G$. See \citep{DO95,Bodlaender-DMTCS99,BodEng-JAlg97,Wood09} for work on domino tree-decompositions. \citet{DO95} and \citet{BodEng-JAlg97} both proved that graphs of bounded treewidth $k$ and bounded maximum degree $\Delta$ have bounded domino treewidth. The previously best known bound is $\dtw(G)\leq (9k+7)\Delta(\Delta+1)-1$ by \citet{Bodlaender-DMTCS99}.

\citet{BodEng-JAlg97} observed that domino tree-decompositions can be used to construct tree-partitions; in particular, $\tpw(G)\leq \dtw(G) +1$. Conversely, we now show that tree-partitions can be used to construct  domino tree-decompositions. Let $(B_x:x\in V(T))$ be a tree-partition of $G$. Root $T$ at an arbitrary node. For each $x\in V(T)$ let $B'_x$ be the set of vertices $w\in V(G)$ such that either $w\in B_x$, or $w$ has a neighbour in $B_x$ and $w$ is in $B_y$ for some child $y$ of $x$ in $T$. Note that $|B'_x|\leq |B_x| (\Delta(G)+1)$, and $(B'_x:x\in V(T))$ is  a domino tree-decomposition of $G$. Thus
\begin{align}
\label{NewMakeDomino}
\dtw(G) \leq \tpw(G) ( \Delta(G)+1) -1.
\end{align}

%




\cref{ImprovedTreePartition} and \eqref{NewMakeDomino} imply that every graph with treewidth $k$ and maximum degree $\Delta$ satisfies 
\begin{equation}
    \label{ImprovedDTW}
    \dtw(G) \leq (k+1)(\Delta+1)(8\Delta-3)-1,
\end{equation}
which is a small constant factor improvement over the above-mentioned bound due to \citet{Bodlaender-DMTCS99}. 


Since every tree indexes a tree-partition of itself with width 1, \cref{NewMakeDomino} implies that $\dtw(T)\leq\Delta(G)$ for every tree $T$. In contrast, for treewidth at least 2, we show that quadratic dependence on $\Delta$ is essential in any upper bound on the domino treewidth.

\begin{thm}
\label{DTWLowerBound}
For any integers $k\geq 2$ and $\Delta\geq k+7$ there exists a graph $G$ with treewidth at most $k$, maximum degree at most $\Delta$, and domino treewidth $\Omega(k\Delta^2)$.
\end{thm}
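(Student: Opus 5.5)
We will construct, for given $k\geq 2$ and $\Delta$, a graph $G$ of treewidth at most $k$ and maximum degree at most $\Delta$ in which any domino tree-decomposition must have a bag of size $\Omega(k\Delta^2)$. The guiding intuition is \eqref{NewMakeDomino}: in a domino decomposition, each vertex $v$ lies in at most two bags. Hence all of its $\deg(v)$ neighbours must lie in the union of those two bags, so the spread constraint forces neighbourhoods to be ``packed'' into few bags. We want a graph where this packing compounds over two levels, giving $\Delta\cdot\Delta\cdot k$.

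The construction I would try is a ``blown-up complete ternary-ish tree of depth two''. Start with a root structure $R$: a clique-like core or a long path (treewidth at most $k$) whose vertices have degree about $\Delta$. Attach to each vertex of $R$ about $\Delta/2$ gadgets $H$. Each gadget $H$ is itself a graph of treewidth $k$ (for example $K_{k}$ joined to a long path, or a $k$-th power of a path) that is highly connected internally. Each vertex of $H$ is in turn adjacent to about $\Delta$ vertices of the next level. The property to exploit is this: if a connected subgraph $X$ is highly connected, say it contains $k+1$ internally disjoint paths between its parts, then in any tree-decomposition some bag contains a separator hitting $X$ substantially. With spread $2$, the vertices of $X$ can only occupy a bounded region of the tree. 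Concretely, I would prove a lemma that in a domino tree-decomposition of width $w$, the set of nodes whose bags meet a connected set $X$ of diameter $r$ forms a subtree of diameter $O(r)$. This holds because adjacent vertices share a bag and each vertex covers at most two adjacent nodes. So every vertex within distance $r$ of $v$ appears in bags within tree-distance $O(r)$ of $v$'s bags.

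Given that lemma, the plan is a counting argument. Pick a vertex $v$ whose ball of radius $2$ (or $3$) contains $\Omega(k\Delta^2)$ vertices. These vertices all lie in bags within a subtree of bounded diameter $D$. That alone is not enough, since the subtree can have many nodes, so we must use treewidth-$k$ structure that forces high degree. I would design $G$ so that the ball is ``$k$-connected-ish'', made of $\Theta(\Delta^2)$ copies of $K_{k}$ glued along a $\Delta$-regular pattern of low diameter. Such a ball cannot be split across many bags without some bag capturing a large separator. The assumption $k\geq2$ is used here: for trees ($k=1$) the separators are single vertices, and $\dtw\leq\Delta$.

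The main obstacle is turning ``bounded-diameter subtree'' into ``one big bag''. I would try this first: take the central edge $xy$ of the region, and use the fact that each vertex $u$ of the ball lies in at most two bags together with all its neighbours. Any vertex not in $B_x\cup B_y$ then lies entirely on one side, and edges crossing between sides must be covered by $B_x\cap B_y$. Next, build the gadgets so that the ball contains $\Omega(\Delta)$ vertex-disjoint ``fans'' of size $\Theta(k\Delta)$, each joined to the centre by $k$ disjoint paths of length $1$. Then a balanced-separator argument at the bounded-depth region forces either one side to absorb almost everything, giving a big bag by iterating inward, or $\Omega(k\Delta^2)$ vertices in the separating bags. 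Verifying maximum degree $\leq\Delta$ needs $\Delta\geq k+7$: each core vertex spends $k-1$ edges inside its $K_k$ plus a few path edges, leaving about $\Delta-k-7\geq 0$ for pendant gadgets. This is the step I expect to require the most care.
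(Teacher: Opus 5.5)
\textbf{The gap.} Your outline breaks down at exactly the step you flag. Nothing in it forces a single bag of size $\Omega(k\Delta^2)$, and you never fix a concrete graph.

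The heuristic you offer for that step targets the wrong mechanism. In a graph of treewidth $k$, every vertex set has balanced separators of size $k+1$. So connectivity or balanced-separator arguments can never force bags larger than $O(k)$.

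In a domino decomposition, large bags can only come from the spread constraint acting on high-degree vertices. A vertex lying in at most two adjacent bags drags its whole neighbourhood into their union. One vertex yields only $\Omega(\Delta)$ this way. To reach $\Omega(\Delta^2)$ you must force about $\Delta$ high-degree vertices, together with their neighbourhoods, into two bags.

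The paper's case analysis shows that at each level the only alternative to this is pushing the same configuration one level deeper, and that costs just one vertex in a fixed bag. So a counting argument on a bounded-radius ball is not what drives the bound: the depth of the construction has to be about $\Delta^2$.

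Your degree bookkeeping also does not close. By your own count, at $\Delta=k+7$ a core vertex has no edges left for the $\Theta(\Delta)$ gadgets you need. You also give no reason why gluing copies of $K_k$ should multiply the domino width by $k$.

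\textbf{What the paper does instead.} Its proof supplies both missing ingredients.

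\emph{The deep recursion.} It uses the outerplanar graphs $G_{d,n}$: for every edge $uv$ of the last layer, add a $(2d-1)$-vertex path whose first $d$ vertices see $u$ and whose last $d$ vertices see $v$. It also uses their halves $H_{d,n}$. It proves by induction on $n$ that if $L_0\cup L_1\subseteq B_b$ (resp.\ $u\in B_a$, $\{u,v\}\subseteq B_b$, $v\in B_c$ with $a,b,c$ distinct), then $|B_b|\ge n$ or the width is at least $d^2-d$. There are two possibilities.
\begin{enumerate}[(a)]
\item The level-1 path lies in the intersection of two bags. Then all $\approx 2d^2$ vertices of levels $0$--$2$ lie in two bags.
\item The same configuration reappears, with the same bag $B_b$, in a copy of $H_{d,n-1}$ or $G_{d,n-1}$ disjoint from $L_0$.
\end{enumerate}
Take $n=d^2-d+1$ and merge the two bags containing the root. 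This gives $\dtw(H_{d,n})\ge\frac12(d^2-d-1)$ with maximum degree $2d+4$.

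\emph{The factor $k$.} This comes from the product $H\square K_{2k-1}$. Take a domino decomposition of the product. The sets $\{v:|C_v\cap B_t|\ge k\}$ then form a domino decomposition of $H$: two majorities of a $(2k-1)$-clique meet, so their bags are adjacent, and the perfect matchings between cliques give the edge property. Hence $\dtw(H\square K_{2k-1})+1\ge k(\dtw(H)+1)$. Meanwhile the treewidth is at most $6k-4$ and the maximum degree is $2(d+k+1)$. The hypothesis $\Delta\ge k+7$ is only parameter fitting.
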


\cref{DTWLowerBound}, which is proved in \cref{DominoTreewidthLowerBound}, solves an open problem of  \citet{Bodlaender-DMTCS99}, who asked whether the above-mentioned $O(k\Delta^2)$ upper bound is best possible. \cref{DTWLowerBound} says the answer is `yes'. The best previous lower bound was $\Omega(k\Delta)$ \citep{Bodlaender-DMTCS99,Wood09}.

The graph in the $k=2$ case of \cref{DTWLowerBound} is outerplanar (and the proof shows that this case is the heart of the matter). This says that quadratic dependence on $\Delta$ is essential in any upper bound on the domino treewidth, even for outerplanar graphs, which are one of the simplest classes beyond trees. On the other hand, for outerplanar graphs we show that the quadratic dependence on $\Delta$ in \cref{DTWLowerBound} is peculiar to spread 2:





\begin{thm}
\label{OuterplanarUpperBound}
Every outerplanar graph $G$ with maximum degree $\Delta\geq 1$ has a tree-decomposition with width at most $6\Delta-5$ such that each vertex has spread at most 3.
\end{thm}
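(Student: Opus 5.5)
Since outerplanar graphs have treewidth at most 2, I will build the decomposition directly from a BFS layering, exploiting the fact that outerplanar BFS layers have linear (path-like) structure. First I reduce to the 2-connected case. A tree-decomposition of each block with spread at most 3 can be glued at the cut vertices. To avoid raising the spread of a cut vertex $v$ beyond 3, I would choose $v$ as the root of the layering in every block except the one containing its parent. Then $v$ lies in a single small bag in those blocks, and I can identify all these bags at $v$ by merging bags. I expect the width to still pick up a term of order $\Delta$ from this merging, which I will need to budget for. A simpler alternative is to add edges so that $G$ becomes 2-connected while keeping the maximum degree $O(\Delta)$, but that changes the constants, so I prefer gluing.

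For a 2-connected outerplanar $G$ with Hamiltonian cycle $C$, I fix a root $r$ and BFS layers $L_0,L_1,\dots$. Two standard facts about outerplanar graphs (used also in layered treewidth arguments) drive the construction:
\begin{itemize}
\item each connected component of $G[L_{i}\cup L_{i+1}\cup\cdots]$ has a set of attachments in $L_{i-1}$ that forms a contiguous interval of $C$;
\item each component $X$ of $G[L_i]$ is a path.
\end{itemize}
The standard layered-partition argument then gives a tree $T$ whose nodes are the components $X$ of the layers, with $X$ adjacent to the component of $L_{i-1}$ containing its parents. This is a tree-partition, but its parts can be long paths. Instead of taking whole components as parts, I would subdivide each path component $X$ into short consecutive segments. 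Each segment $S$ of $X$ corresponds to a set of children components in $L_{i+1}$ that attach to it, and by outerplanarity and planarity, consecutive segments share at most one boundary child component.

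Concretely, I plan to build a tree-decomposition in which each vertex $v\in L_i$ appears in at most three bags:
\begin{itemize}
\item its own bag, consisting of $v$ together with its two path-neighbours in $X$;
\item the bag of its parent-side attachment;
\item one bag at the next level.
\end{itemize}
The bag for a vertex $v$ (or a small segment) would be $v$, its at most $\Delta$ neighbours in $L_{i+1}$, and its neighbours in $L_{i-1}\cup L_i$. I would arrange the tree by ordering along $C$ so that the bags covering the children of $v$ form a caterpillar hanging off $v$'s bag. Counting at most $\Delta$ neighbours per vertex and three vertices per local window gives width roughly $3\cdot 2\Delta$, which matches the $6\Delta-5$ bound after subtracting the overcounted shared vertices.

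The main obstacle is making the vertex-property hold. The bags containing a given vertex must be connected in $T$, and edges between consecutive vertices of a path component $X$ whose children interleave must be covered without increasing the spread. I would handle this using the interval property: the children of consecutive vertices $u,v$ of $X$ occupy consecutive arcs of $C$, sharing at most one vertex $w$ adjacent to both. I would place $u,v,w$ together in a single junction bag, and then verify by case analysis that each vertex lies only in its own bag, its parent's bag, and one junction bag. I would first try to prove spread $\le 3$ by charging each occurrence of a vertex to one of these three roles, and then do the width count.
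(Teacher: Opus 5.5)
There is a genuine gap, and it sits exactly where you flag the main obstacle. You work with the BFS layering of $G$ itself. In a non-triangulated outerplanar graph, a component of $G[L_i]$ can attach to two non-adjacent vertices of $L_{i-1}$ that lie in different components of $G[L_{i-1}]$. Already for $G=C_6=(a_0,\dots,a_5)$ rooted at $a_0$, every layer component is a singleton, and $a_3$ has its parents $a_2,a_4$ in different components of $G[L_2]$. So your graph on layer components is the $6$-cycle, not a tree, and the same happens whenever a face spans several layers. There is then no tree $T$ whose path-nodes you can subdivide, and the junction-bag case analysis has nothing to stand on.

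The missing idea, which is the paper's route, is to triangulate first. Zig-zag triangulating every internal face (\cref{OuterplanarChordalCompletion}) gives an outerplanar \emph{chordal} completion $G'$ with $\Delta(G')\leq 3\Delta-2$. Chordality forces the attachment set $K\subseteq L_{i-1}$ of each component of $G'[L_i]$ to be a clique. With outerplanarity giving $|K|\leq 2$, $K$ is a single vertex or a single edge of one path component of $G'[L_{i-1}]$. This is also where the constant comes from: $6\Delta-5=2(3\Delta-2)-1$, where $2\Delta'-1$ is the width achievable for chordal outerplanar graphs of maximum degree $\Delta'$. Your count of ``roughly $3\cdot 2\Delta$ after subtracting overcounted vertices'' is not a derivation of this.

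Second, the bags you describe cannot give spread $3$. If the bag of each vertex $v$ contains $v$ together with its neighbours in $L_{i-1}\cup L_i\cup L_{i+1}$, then every vertex lies in the bag of each of its neighbours, so its spread can be $\deg(v)+1$. Adding junction bags only makes this worse. To get spread $3$, each vertex must enter essentially one bag ``from above'', and the paper does this in the chordal setting as follows:
\begin{enumerate}[(1)]
\item Root each path component $P$ of each layer at an endpoint $u_P$, and let $Y_w$ consist of $w$ and its child on $P$.
\item Charge every vertex of a child component of $\{u_P\}$ to $u_P$. Whenever $w$ is the parent of $v$ on $P$, charge every vertex of a child component of $\{v,w\}$ or of $\{v\}$ to $w$.
\item Set $B_x:=Y_x\cup\{z: z\text{ is charged to }x\}$, and let $T$ be the tree given by the charging relation.
\end{enumerate}
Then $x$ lies only in $B_x$, in $B_z$ for its parent $z$ on $P$, and in $B_y$ for the vertex $y$ it is charged to. These three nodes are connected in $T$, since all of $P$ is charged to $y$ and hence $yx,yz\in E(T)$. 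Also $B_x\subseteq N_{G'}[Y_x]$ with $Y_x$ a clique of size at most $2$, which gives $|B_x|\leq 2\Delta'$.

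This argument works for any connected chordal outerplanar graph, so your reduction to the $2$-connected case is unnecessary. As you set it up, that reduction either pushes a cut vertex to spread $4$ or adds an unbudgeted $\Theta(\Delta)$ to some bag.
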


We generalise the $O(\Delta)$ width bound in \cref{OuterplanarUpperBound} for arbitrary graphs of bounded treewidth as follows.

\begin{restatable}{thm}{LinearWidthBoundedSpread}
\label{LinearWidthBoundedSpread}
Every graph $G$ with treewidth $k$ and maximum degree $\Delta$ has a tree-decomposition with width $O(k\Delta)$, such that each vertex in $G$ has spread $2^{O(k)}$.
\end{restatable}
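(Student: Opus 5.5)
The plan is to go through chordal completions, as the abstract suggests. Take a tree-decomposition of $G$ of width $k$ and consider the natural candidate: find a chordal supergraph $H$ of $G$ with $\omega(H)\leq O(k)$ such that every vertex $v$ has $\deg_H(v)\leq 2^{O(k)}\Delta$, or better, such that each vertex lies in at most $2^{O(k)}$ maximal cliques of $H$. Given such an $H$, take a clique-tree of $H$; its bags are the maximal cliques. The spread of $v$ is then the number of maximal cliques containing $v$. The width is at most $\omega(H)-1=O(k)$, which is even better than the required $O(k\Delta)$. So the real content is to find a chordal completion $H$ of bounded clique number in which each vertex lies in few maximal cliques. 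This cannot hold in general: a star has treewidth $1$, yet its centre lies in $\Delta$ maximal cliques of any chordal completion with bounded clique number. So we must allow bags of size $O(k\Delta)$. The refined plan is to first group vertices and then complete.

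\textbf{Step 1.} Apply \cref{ImprovedTreePartition} to obtain a $T$-partition $(V_x:x\in V(T))$ of $G$ with width $w=O(k\Delta)$ and $\Delta(T)\leq 4\Delta-1$. The quotient graph is a tree, but the tree-decomposition it induces (pairs of adjacent parts) has spread up to $\Delta(T)$, which is too large.

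\textbf{Step 2.} Instead, combine the tree-partition with a width-$k$ tree-decomposition $(B_y:y\in V(S))$ of $G$. For each part $V_x$, consider the edges from $V_x$ to its children parts $V_{x_1},\dots,V_{x_m}$ with $m\leq 4\Delta$. Group these children according to how they attach via the tree-decomposition $S$, namely by the set of at most $k+1$ separator vertices of $V_x$ that they see. For each vertex $v$, the number of distinct "types" it participates in will be $2^{O(k)}$. For each type we create one bag containing $V_x$'s relevant separator vertices together with the children of that type, arranged in a caterpillar or path fashion. This way $v\in V_x$ appears in one bag per type that contains it, rather than one bag per child.

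\textbf{Step 3.} Verify the edge-property and the vertex-property, check that the width is $O(k\Delta)$ (a bag consists of $O(k)$ vertices of $V_x$ plus one child part of size $O(k\Delta)$), and check that the spread is $2^{O(k)}$. The count of types containing $v$ is bounded by the number of subsets of a size-$(k+1)$ separator containing $v$, which is $2^{k}$. Since $v$ also appears in bags attached to its parent's structure, we get a further constant factor.

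\textbf{Main obstacle.} The hard step is Step 2: making the vertex-property hold. Each vertex of $V_x$ must occupy a connected subtree, while only its "type" bags plus its own part bag contain it. The natural fix is to order the children so that bags sharing a vertex $v$ are consecutive. This amounts to finding a chordal (interval-like) completion on the separator structure in which each vertex lies in $2^{O(k)}$ maximal cliques. I would attempt this by induction on $k$: remove one separator vertex, recurse on the remaining $k$, and double the spread at each level. This doubling is exactly where the $2^{O(k)}$ arises, consistent with the paper's remark that this chordal-completion connection is best possible.
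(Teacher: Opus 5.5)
There is a genuine gap. Step~2 carries all of the content, but it is not an argument, and its quantitative claims do not hold.

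In the tree-partition from \cref{ImprovedTreePartition}, a child of $x$ is a pseudo-component: a union of possibly many components of $G-(S\cup Y)$, grouped only so that $\gamma(G_i)\leq 4kd$. So its attachment set in $B_x$ need not lie in one bag of the width-$k$ decomposition, and need not have size $O(k)$.

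More seriously, your claim that at most $2^k$ types contain $v$ presumes that every attachment set containing $v$ lies in one fixed $(k+1)$-set. Nothing forces this, since $v$ can lie in many bags of the decomposition. So the number of types containing $v$ is bounded only by the number of children adjacent to $v$, which can be $\Delta$.

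Grouping by type also saves nothing by itself. Every vertex of a type $\tau$ is adjacent to every child of type $\tau$. With one bag per child, as in your width count, $v$ still lands in one bag per child it sees, i.e.\ up to $\Delta$ bags. With one bag per type, a bag may hold up to $4\Delta-2$ non-small child parts of size more than $2k\Delta$ each, giving width $\Theta(k\Delta^2)$. The vertex-property step is then deferred to an unspecified induction on $k$, so no spread bound is actually proved.

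The missing idea is in your first paragraph, which you abandoned too early. The star only rules out using \emph{maximal cliques} as bags. It does not rule out bags that are closed neighbourhoods in a chordal completion, whose size is governed by the maximum degree rather than the clique number. Your first candidate, a completion of bounded clique number and small degree, is exactly what is needed.

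The paper argues as follows.
\begin{enumerate}[(a)]
\item Take the chordal completion $G'$ of \cref{ChordalCompletionMaxDegree}, with $\omega(G')\leq 14(k+1)$ and $\Delta(G')\leq k(\Delta+1)$, and fix a simplicial orientation.
\item Apply \cref{kTree}. Make a bag $N_{G'}[s]$ for a source $s$. Assign each component $X$ of $G'-N_{G'}[s]$ to the source $v$ of the clique $C_X$ of vertices of $N_{G'}(s)$ with a neighbour in $X$.
\item Recurse, with source $v$, on the subgraph $G'_v$ induced by the union of $V(X)\cup C_X$ over all $X$ assigned to $v$. Attach each resulting bag $N_{G'_v}[v]$ to $N_{G'}[s]$.
\end{enumerate}
The width is at most $\Delta^+(G')\leq k(\Delta+1)$.

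For the spread, let $w\in N_{G'}(s)$ have closed in-neighbourhood $\{s=v_0,v_1,\dots,v_d=w\}$ in topological order. Then $w$ occurs only in the subproblems of $v_1,\dots,v_d$, with in-degree at most $d-i$ in that of $v_i$. By induction its spread is at most $1+\sum_{i=1}^{d}2^{d-i}=2^d\leq 2^{\omega(G')}$.

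This doubling over the in-clique is where $2^{O(k)}$ comes from; no tree-partition is needed.
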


The proof of \cref{LinearWidthBoundedSpread} depends on a result about chordal completions with small maximum degree, due to \citet{Wood25}. We show that this degree bound is best possible, which is of independent interest. This material is presented in \cref{ChordalCompletions}. The proofs of \cref{OuterplanarUpperBound,LinearWidthBoundedSpread} are completed in \cref{BoundedSpread}.

\cref{LinearWidthBoundedSpread} sits between domino treewidth (spread 2) and the results where spread is allowed to depend on degree. \citet{DO95} first proved that every graph with bounded treewidth has a tree-decomposition of bounded width, where each vertex $v$ has spread $O(\deg(v))$. In this result the hidden dependence on treewidth is exponential (or greater). Much better bounds were obtained by \citet{Wood25}:

\begin{thm}[\protect\citep{Wood25}]
\label{SmallSpread}
Every graph $G$ with treewidth $k$ has a tree-decomposition with width at most $14k+13$ such that each vertex $v$ has spread at most $\deg_G(v)+1$. 
\end{thm}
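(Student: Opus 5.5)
The statement is \cref{SmallSpread}, which we are citing from \citet{Wood25}; here is how I would prove it directly. The plan is the standard recursive balanced-separator construction of a tree-decomposition (as in Reed's proof that treewidth is approximated by separators), modified so that every bag containing a vertex $v$, other than the first, also contains a \emph{new} neighbour of $v$. Since $G$ has treewidth $k$, every vertex-weighting of $G$ has a balanced separator of size at most $k+1$: there is a bag of a width-$k$ tree-decomposition whose removal leaves components each of weight at most half the total. We recursively process pairs $(H,S)$, where $H$ is a connected subgraph of $G$, $S\subseteq V(G)\setminus V(H)$, and every vertex of $S$ has a neighbour in $H$. We aim for the invariant $|S|\leq c(k+1)$ for a suitable constant $c$. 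Processing $(H,S)$ creates one bag $S\cup X$ with $X\subseteq V(H)$, and then recurses.

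\textbf{Choice of $X$.} Let $N\subseteq V(H)$ consist of one chosen neighbour in $H$ of each $s\in S$, so $|N|\leq|S|$. Let $Y$ be a separator of $H$ of size at most $k+1$ that is balanced with respect to the weighting given by $N$; one may also balance with respect to the vertices of $S$ attached to each component, in the style of Reed. Set $X:=N\cup Y$. For each component $C$ of $H-X$, recurse on $(C,S_C)$, where $S_C$ is the set of vertices of $S\cup X$ having a neighbour in $C$. Restricting the boundary to vertices with a neighbour in $C$ is what keeps every vertex of $S_C$ eligible to receive a chosen neighbour in the next step. Validity as a tree-decomposition is routine. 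Each edge either lies inside some $S\cup X$, or joins $S_C$ to $C$ and is therefore captured deeper in the recursion. The bags containing a given vertex form a subtree, since $v$ enters at the node where it lies in $X$ and then descends only through boundaries.

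\textbf{Spread.} Each vertex lies in $X$ for exactly one node, namely the one where it is first placed. Every other bag containing $v$ arises from a pair $(H,S)$ with $v\in S$. By construction, that bag contains a chosen neighbour $u\in N\subseteq X$ of $v$, and $u$ is placed in $X$ at that node and at no other node. So distinct such bags receive distinct neighbours of $v$, and the spread of $v$ is at most $1+\deg_G(v)$. This part is clean.

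\textbf{Main obstacle: the width, with the constant $14k+13$.} A bag has size at most $|S|+|N|+|Y|\leq 2|S|+k+1$, so the target bound requires roughly $|S|\leq 6.5(k+1)$. The difficulty is that $X$ now contains $N$, which may be as large as $|S|$. Consequently a child boundary $S_C$ could include parts of $S$, of $N$ and of $Y$, and a single balanced separator does not halve $S\cup N$ simultaneously. To overcome this, I would balance $Y$ with respect to the combined weight of $S\cup N$, with each vertex of $S$ counted via a component it touches. Then each child receives at most about half of the old boundary material plus $k+1$ new separator vertices. If a single step does not reduce the boundary enough, I would alternate two kinds of steps: ``neighbour steps'' that only add $N$, and ``separator steps'' that only add $Y$. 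Either way, the boundary is kept below the threshold $c(k+1)$, and solving the resulting linear recurrence for $c$ should yield the stated width $14k+13$.
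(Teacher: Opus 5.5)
Your spread mechanism is right: placing one private neighbour $n_s\in N$ of each boundary vertex into $X$ at every node does give spread at most $\deg_G(v)+1$, and validity is routine. But the width bound, which is the real content of the theorem, is only asserted (``should yield''), and with the separators you propose the induction does not close. Two things go wrong.

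First, $Y$ is a separator of $H$, while $S$ lies outside $H$. A vertex $s\in S$ can be adjacent to many components of $H-X$, and then it lies in all of their boundaries. So no weighting on $V(H)$ bounds $|S_C\cap S|$; in particular, `counting $s$ via one component it touches' does not. The separator has to be taken in $G$, with the vertices of $S\cup N$ themselves as the weighted set. Then every vertex of $(S\cup N)\setminus Y$ adjacent to a component $C$ of $H-X$ lies in the same component of $G-Y$ as $C$.

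Second, even after this fix a single bag only gives a $\frac12$-balance. That yields $|S_C|\le\frac12(|S|+|N|)+(k+1)\le|S|+(k+1)$, so the boundary need not shrink and no invariant $|S|\le c(k+1)$ follows for any constant $c$. Your alternating fallback does not rescue this. Neighbour steps can double the boundary, and one-bag separator steps only bring it back to $c(k+1)+(k+1)$. Moreover, bags created in separator steps contain no private neighbour of the boundary vertices, so your counting argument would only give spread about $2\deg_G(v)+1$.

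The missing idea is a separator made of \emph{two} bags that splits $S\cup N$ into thirds, namely \cref{TreewidthSetSep} with $b=2$. This is Corollary~11 of \citet{Wood25}; the paper quotes \cref{SmallSpread} from there without proof, so there is no in-paper proof to compare against. The construction goes as follows.
\begin{itemize}
\item Maintain the invariant $|S|\le 6(k+1)$, and choose $N$ as you do.
\item Let $Y$ be a union of two bags of a width-$k$ tree-decomposition of $G$ such that each component of $G-Y$ contains at most $\frac13|S\cup N|\le 4(k+1)$ vertices of $S\cup N$.
\item Set $X:=N\cup(Y\cap V(H))$.
\end{itemize}
Each component $C$ of $H-X$ avoids $Y$, so it lies in one component $D$ of $G-Y$. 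Since $S_C\setminus Y\subseteq (S\cup N)\cap V(D)$, we get $|S_C|\le 4(k+1)+|Y|\le 6(k+1)$, which restores the invariant. Each bag has size at most $|S|+|N|+|Y|\le(6+6+2)(k+1)=14(k+1)$, which is exactly width $14k+13$. At every non-root node $N\neq\emptyset$, so $X$ is non-empty and the recursion terminates. Your spread argument then applies verbatim.
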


The method of \citet{Wood25} was pushed further by \citet{BG26}, improving the width bound in \cref{SmallSpread} from  $14k+13$ to $(3+\epsilon)k$ (for any fixed $\epsilon>0$), at the expense of a constant factor increase in the spread bound. 

The following table summarises the known upper bounds on width and spread in a tree-decomposition of a graph with treewidth $k$ and maximum degree $\Delta$. Results in grey are superseded by subsequent improved bounds.

\setlength{\tabcolsep}{1ex}
\begin{center}
\begin{tabular}{ccl}
\hline
&\\[-2.5ex]
$\text{width}\leq$ & $\text{spread}(v)\leq$ & Reference\\[0.5ex]
\hline
&\\[-2ex]
\textcolor{lightgray}{$2^{k+1}(k+1)-1$} & \textcolor{lightgray}{$1+3^{2^k}\,\deg(v)$} & \textcolor{lightgray}{\hypersetup{citecolor=lightgray}\citet{DO95}}\\[0.5ex]
$14k+13$ & $\deg(v)+1$ & \citet{Wood25}\\[0.5ex]
$(3+\epsilon)k$ & $O_\epsilon(\deg(v))$ & \citet{BG26}\\[0.5ex]
$O(k\Delta)$ & $2^{O(k)}$ & \cref{LinearWidthBoundedSpread}\\[0.5ex]
\textcolor{lightgray}{$f(k,\Delta)$} & \textcolor{lightgray}{$2$} & \textcolor{lightgray}{\hypersetup{citecolor=lightgray}\citet{BodEng-JAlg97}}\\[0.5ex]
\textcolor{lightgray}{$O(k^2\Delta^3)$} & \textcolor{lightgray}{$2$} & \textcolor{lightgray}{\hypersetup{citecolor=lightgray}\citet{DO95}}\\[0.5ex]
\textcolor{lightgray}{$(9+o(1))k\Delta^2$} & \textcolor{lightgray}{$2$} & 
\textcolor{lightgray}{\hypersetup{citecolor=lightgray}\citet{Bodlaender-DMTCS99} }\\[0.5ex]
$(8+o(1))k\Delta^2$ & $2$ & Equation~\cref{ImprovedDTW}\\[0.5ex]
\hline
\end{tabular}
\end{center}

\section{Tree-Partition Proofs}
\label{TreePartitionProof}

This section proves \cref{ImprovedTreePartition}, which constructs tree-partitions of small width, where the underlying tree has small degree and order. The proof relies on the following two lemmas. (When combined the lemmas are similar to Corollary~2.3 of \citet{Bodlaender-DMTCS99}.)\ 

\begin{lem}[{\protect\citep[Corollary~11]{Wood25}}]
\label{TreewidthSetSep}
For any tree-decomposition $\TT$ of a graph $G$, for any set $S\subseteq V(G)$, for any integer $b\geq 1$, there is a set $X\subseteq V(G)$, equal to the union of $b$ bags in $\TT$, such that each component of $G-X$ has at most $\frac{|S|}{b+1}$ vertices in $S$. 
\end{lem}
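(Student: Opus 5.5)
We prove the lemma by induction on $b$, in the strengthened form where $S$ may carry nonnegative weights $w\colon S\to\mathbb{R}_{\geq 0}$: for any such weighting there is a union $X$ of $b$ bags of $\TT$ such that every component of $G-X$ has weight at most $\frac{w(S)}{b+1}$. The case of the statement is unit weights. Let $\TT=(B_x:x\in V(T))$. The basic tool is the classical separator argument. Orient each edge $xy$ of $T$ towards the side containing more than a given threshold $\tau$ of weight, where the weight of a side is the weight of $S$-vertices in the union of its bags minus $B_x\cap B_y$. If no edge has such a side, any node serves as a sink. Otherwise follow the orientation to a sink node $z$. Every component of $G-B_z$ lies within the bags of a single component of $T-z$, by the vertex- and edge-properties. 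Hence every component of $G-B_z$ has weight at most $\tau$.

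The induction proceeds as follows.

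For $b=1$, take $\tau=w(S)/2$, which gives the classical fact that some single bag is a balanced separator.

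For the inductive step, the difficulty is that a single bag may leave one component of weight about $w(S)/2$, while many small components fragment the rest. So rather than recursing inside components, we choose the first bag greedily. Root $T$ and pick a node $z$ that is deepest (furthest from the root) among nodes $x$ for which the weight of $S\cap(\bigcup_{y\text{ descendant of }x}B_y)$ exceeds $\frac{w(S)}{b+1}$. If no such node exists, the root's subtree itself has weight at most $\frac{w(S)}{b+1}$, so any $b$ bags work.

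Let $X_1=B_z$. Components of $G-X_1$ that live inside the subtrees of children of $z$ have weight at most $\frac{w(S)}{b+1}$ by the choice of $z$, and they carry total weight at least $\frac{w(S)}{b+1}$ minus the weight of $B_z$. Now delete the subtree at $z$ and restrict $\TT$ to the remaining part of $T$, which is still a tree-decomposition of $G-V_z$, where $V_z$ is the set of vertices appearing only in the subtree at $z$. Define a new weighting $w'$ as the restriction of $w$ to $S\setminus(V_z\cup B_z)$. Its total is at most $w(S)-\frac{w(S)}{b+1}=\frac{b\,w(S)}{b+1}$.

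Apply induction with $b-1$ to $G-(V_z\cup B_z)$ and the decomposition restricted to $T$ minus the subtree at $z$; restricting bags preserves the tree-decomposition properties. This yields $b-1$ bags, whose union $X'$ leaves components of weight at most $\frac{w'(S)}{b}\leq\frac{w(S)}{b+1}$. Set $X=X_1\cup X'$, a union of $b$ bags of $\TT$.

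One checks that every component of $G-X$ is either inside $V_z$, and hence contained in one child subtree, or avoids $V_z$ entirely, because $B_z$ separates $V_z$ from the rest. This gives the bound.

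The main obstacle is the accounting step: making sure the deepest heavy node $z$ actually removes weight at least $\frac{w(S)}{b+1}$ from the remainder, since some of that weight may sit in $B_z$ itself. This is handled by defining the remainder as excluding $B_z$, so that all weight in $z$'s subtree, including $B_z$, is discarded. I expect verifying that the restricted decomposition is valid and that components do not straddle $B_z$ to be routine.
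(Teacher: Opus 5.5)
Your proof is correct and is the standard argument for this lemma; the paper itself gives no proof and quotes the result without proof. The argument: choose a deepest node $z$ whose subtree's bags contain more than a $\frac{1}{b+1}$ share of $S$, put $B_z$ into $X$, discard everything appearing in the subtree at $z$, and recurse with $b-1$; the identity $B_z\cup\bigcup_i(B_{x_i}\setminus B_z)=B_z\cup\bigcup_i B_{x_i}$ guarantees that $X$ really is a union of $b$ bags of $\TT$. The weights and the separate orientation argument for $b=1$ are not needed, since the same step works starting from the trivial case $b=0$, and the degenerate case where $z$ is the root deserves a one-line mention, but these points are cosmetic.
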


For a graph $G$ and set $S\subseteq V(G)$, let $\mathdefn{N_G(S)}:=(\bigcup_{v\in S}N_G(v))\setminus S$. 

\begin{lem}[{\protect\citep[Lemma~8]{DHHJLMMRW25}}]
\label{TwoBags}
For any graph $G$ and tree-decomposition $\TT$ of $G$, for any integer $b\geq1$, if $X$ is the union of $b$ bags of $\TT$, then there is a set $Y\subseteq V(G)$ that is the union of at most $2b-1$ bags of $\TT$ such that $X \subseteq Y$ and for every component $C$ of $G-Y$, $N(V(C)) \cap Y$ is a subset of the union of at most two bags of $\TT$. 
\end{lem}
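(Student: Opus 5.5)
The plan is to work directly in the tree $T$ indexing $\TT=(B_x:x\in V(T))$ and to enlarge the set of $b$ nodes defining $X$ by adding the branching nodes of the subtree they span.

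\emph{Step 1 (choosing the nodes).} Let $A\subseteq V(T)$ be a set of $b$ nodes with $X=\bigcup_{a\in A}B_a$. Let $T'$ be the minimal subtree of $T$ containing $A$; every leaf of $T'$ lies in $A$. Let $A'$ be $A$ together with all nodes of degree at least $3$ in $T'$. If $b=1$ then $A'=A$. If $b\geq 2$, then a tree with at most $b$ leaves has at most $b-2$ nodes of degree at least $3$, so $|A'|\leq 2b-2$. In all cases $|A'|\leq 2b-1$. Put $Y:=\bigcup_{a\in A'}B_a$, so $X\subseteq Y$.

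\emph{Step 2 (components of $T-A'$ have at most two attachments).} Let $R$ be a component of $T-A'$. We claim $R$ has at most two neighbours in $A'$, all lying in $T'$.
\begin{itemize}
\item If $R$ meets $T'$, then $R\cap T'$ is connected and contains no node of degree $\geq 3$ in $T'$ and no node of $A$. Hence $R\cap T'$ is a subpath of $T'$ whose internal nodes all have degree $2$ in $T'$. Its only neighbours in $A'$ are the at most two nodes of $T'$ just beyond its ends. The parts of $R$ outside $T'$ lie in subtrees hanging off $T'$, and these contain no node of $A'\subseteq V(T')$.
\item If $R$ misses $T'$, then $R$ lies inside a single component of $T-V(T')$. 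That component attaches to $T'$ through one edge, so $R$ has at most one neighbour in $A'$.
\end{itemize}

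\emph{Step 3 (transfer to $G$).} Let $C$ be a component of $G-Y$. By the vertex-property and the connectivity of $C$, the set $T_C$ of nodes whose bags meet $V(C)$ is a subtree of $T$. Since $C$ avoids every bag indexed by $A'$, $T_C$ is disjoint from $A'$, so $T_C$ lies in one component $R$ of $T-A'$.

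Now take $y\in N(V(C))\cap Y$, adjacent to some $c\in V(C)$. By the edge-property, some bag $B_z$ contains both $y$ and $c$, and $z\in T_C\subseteq R$. Also $y\in B_a$ for some $a\in A'$. The $z$--$a$ path in $T$ leaves $R$, and the first node outside $R$ is a neighbour $a'$ of $R$ in $A'$. By the vertex-property, $y\in B_{a'}$. Thus $N(V(C))\cap Y$ is contained in the union of the bags at the at most two neighbours of $R$ in $A'$, which proves the statement.

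The only delicate point is Step 2. One must check that nodes of $R$ off the spanning subtree $T'$ cannot create additional attachments to $A'$. This holds because $A'\subseteq V(T')$, so any subtree hanging off $T'$ contains no node of $A'$.
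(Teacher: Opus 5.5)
The paper does not prove \cref{TwoBags}; it imports it from Lemma~8 of the cited reference, so there is no in-paper argument to compare against. Your proof is correct and self-contained.

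\begin{itemize}
\item Adding the branching nodes of the spanning subtree $T'$ gives $|A'|\le 2b-1$, and in fact $|A'|\le 2b-2$ when $b\ge 2$.
\item Every component $R$ of $T-A'$ has at most two neighbours in $A'$.
\item The transfer to $G$ in Step~3 is right: you use the subtree $T_C$, then the vertex-property along the $z$--$a$ path, to land $y$ in the bag of the first node of $A'$ on that path.
\end{itemize}

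The same idea is often phrased by rooting $T$ and closing the node set under lowest common ancestors. Your unrooted branching-node version is equivalent in spirit and avoids choosing a root.

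One sentence in Step~2 needs tightening. Saying that the subtrees hanging off $T'$ contain no node of $A'$ does not, by itself, rule out the following situation: a node $r\in R\setminus V(T')$ is adjacent to the attachment node $t\in V(T')$ of its hanging subtree, and $t\in A'$. The fix is immediate. When $R$ meets $T'$, the $T$-path from $r$ to a node of $R\cap V(T')$ lies in $R$, since $R$ is a subtree. That path must pass through $t$, because the hanging subtree meets $T'$ only via its edge to $t$. Hence $t\in R$, so $t\notin A'$.

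You also need that $T'$ is an induced subtree of $T$, which holds for any connected subgraph of a tree. Together these show that every neighbour of $R$ in $A'$ is a $T'$-neighbour of the path $R\cap V(T')$. That path consists of degree-$2$ nodes of $T'$, which gives the bound of two.
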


The next lemma is the heart of the proof of \cref{ImprovedTreePartition}. A tree-partition $(B_x:x\in V(G))$ is \defn{rooted} at $r\in V(T)$ if $T$ is rooted at $r$. 

\begin{lem}\label{lem: NewTreePartitionWidthBound}
For any integers $k,d\geq1$, for any graph $G$ with treewidth at most $k-1$ and maximum degree at most $d$, for any $S\subseteq V(G)$ with $ |S| \leq 4kd$, 
there exists a tree-partition $(B_x:x\in V(T))$ of $G$ rooted at $r\in V(T)$ such that:
\begin{enumerate}[(a)] 
    \item\label{lem: NewTreePartitionWidthBound (a)} $|B_x|\leq k(8d-3)$ for each $x\in V(T)$, 
    \item\label{lem: NewTreePartitionWidthBound (b)} $S\subseteq B_r$, 
    \item\label{lem: NewTreePartitionWidthBound (c)} each node of $T$ has at most $4d-2$ children, 
    \item\label{lem: NewTreePartitionWidthBound (d)} for each $x\in V(T)$ if $|B_x|\leq 2kd$ then $x$ is a leaf of $T$ (meaning $x$ has no children), and
    \item\label{lem: NewTreePartitionWidthBound (e)} for each $x\in V(T)$, there is at most one child $y$ of $x$ in $T$ with $|B_y|\leq 2kd$. 
    \end{enumerate}
\end{lem}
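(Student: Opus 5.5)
Induction on $|V(G)|$, with a case split on $|S|$ and a separation step built from \cref{TreewidthSetSep,TwoBags}. Fix a tree-decomposition $\TT$ of $G$ with bags of size at most $k$.

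\emph{Case 1: $|V(G)|\le k(8d-3)$.} Take $T$ to be a single node $r$ with $B_r=V(G)$. Properties (a)--(e) hold trivially; (d) is fine because $r$ has no children.

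\emph{Case 2: $|V(G)|>k(8d-3)$ and $|S|$ is small (say $|S|<2kd$).} We enlarge $S$ before proceeding. Since $G$ has maximum degree at most $d$, $|N_G(S)|\le d|S|$. We add vertices to $S$, preferring neighbours so that connectivity is respected, until $|S|$ lies in $[2kd,4kd]$. This is possible because $|V(G)|>4kd$. This moves us into Case 3 and keeps (b) valid, since the original $S$ remains inside the root bag.

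\emph{Case 3: $2kd\le |S|\le 4kd$.}
\begin{enumerate}
\item Apply \cref{TreewidthSetSep} with $b=2d-1$. It gives a set $X$, the union of $2d-1$ bags, such that every component of $G-X$ contains at most $|S|/(2d)\le 2k$ vertices of $S$.
\item Apply \cref{TwoBags} to enlarge $X$ to a set $Y$, the union of at most $4d-3$ bags, so $|Y|\le k(4d-3)$. Each component $C$ of $G-Y$ then has $|N(V(C))\cap Y|\le 2k$.
\item Set $B_r:=S\cup Y$, so that $|B_r|\le 4kd+k(4d-3)=k(8d-3)$. This gives (a) at the root.
\item For each component $C$ of $G-B_r$, set $S_C:=N_G(B_r)\cap V(C)$. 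The vertices of $S_C$ are neighbours of $S\cap C$ or of $N(C)\cap Y$. A component of $G-B_r$ lies inside a component of $G-Y$, which contains at most $2k$ vertices of $S$ and has at most $2k$ neighbours in $Y$. Hence $|S_C|\le d(2k+2k)=4kd$.
\item Apply induction to each such $C$ with the set $S_C$, and attach the root $r_C$ of the resulting tree as a child of $r$. Edges from $B_r$ into $C$ land in $S_C\subseteq B_{r_C}$, so this is a valid tree-partition. Properties (b) and (c) for nodes below the root are inherited.
\end{enumerate}

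Properties (c), (d) and (e) at the root itself are the main obstacle: the number of components can be large, and components with small $S_C$ produce small children. To fix this, group the components of $G-B_r$ into children. A child bag may be a union of several components' $S_C$ sets, provided the combined set has size at most $4kd$. Only then apply induction to the corresponding union of components, noting that the lemma does not need $G$ to be connected.

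Use greedy bin packing on the sizes $|S_C|\le 4kd$. Merge groups while the sum stays at most $4kd$. Afterwards, any two groups have combined size greater than $4kd$, so at most one group has size at most $2kd$; this is (e) at the root. The total $\sum_C|S_C|\le d|B_r|\le dk(8d-3)$. Every group except possibly one has size greater than $2kd$, so the number of groups is less than $(8d-3)/2+1$, giving at most $4d-2$ children and (c) at the root.

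Property (d) at a child with $|B_{r_C}|\le 2kd$ follows from Case 1 or Case 2. If the union of components assigned to that child has more than $k(8d-3)$ vertices, Case 2 enlarges its $S$ above $2kd$, so a bag of size at most $2kd$ occurs only in Case 1, which gives a leaf. Property (d) at the root holds since $|B_r|\ge|S|\ge 2kd$ whenever $r$ has children; if $|B_r|=2kd$ exactly, a little slack adjustment is needed.

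The delicate accounting is the bin-packing count. It must match $4d-2$ exactly, using $\sum|S_C|\le d\,|B_r|$ together with the lower bound $|B_r|\ge |S|$ and the $k(4d-3)$ bound on $Y$. Getting these constants right is where I expect to spend the effort.
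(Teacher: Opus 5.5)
Your route is the paper's: pad $S$, take $Y$ from \cref{TreewidthSetSep} with $b=2d-1$ and \cref{TwoBags}, and set $B_r:=S\cup Y$. You then merge the components of $G-(S\cup Y)$ into groups of boundary size at most $4kd$ until no two can be merged, and recurse on each group. (The paper instead takes the number $m$ of groups to be minimum, which gives the same pairwise property.) However, two steps fail as written.

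\textbf{The count for (c) is off by one.} From ``all groups but one have size greater than $2kd$'' and $\sum_C|S_C|\le dk(8d-3)$ you only get $(m-1)\,2kd<dk(8d-3)$. That is $m<4d-\tfrac12$, so $m\le 4d-1$, not $4d-2$. Indeed $m=4d-1$ satisfies this inequality, since $(4d-2)\,2kd=(8d-4)kd<(8d-3)kd$. You need the pairwise property in full. Let $g_1\le\dots\le g_m$ be the group sizes with $m\ge 2$. Then $g_1+g_2>4kd$, and $g_j>2kd$ for $j\ge 3$, so $\sum_j g_j>2kd\,m$. This is the paper's double count $(m-1)\sum_i g_i=\sum_{i<j}(g_i+g_j)>\binom{m}{2}4kd$. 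Hence $m<\frac{dk(8d-3)}{2kd}=4d-\tfrac32$, and so $m\le 4d-2$.

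\textbf{Property (d) at the root is not secured.} Padding $S$ only into $[2kd,4kd]$ allows $|B_r|=|S\cup Y|=2kd$, for instance when $|S|=2kd$ and $Y\subseteq S$. In that case $r$ still has children, violating (d). You flag this but leave it open. The fix is to pad $S$ to exactly $4kd$ whenever $|V(G)|>k(8d-3)$, as the paper does. This is possible since $k(8d-3)\ge 4kd$, and it gives $|B_r|\ge 4kd>2kd$ with the width bound unchanged.

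\textbf{Property (e) at the root needs one more sentence.} Make the link between bag sizes and group sizes explicit. By (b) for the child, $S_i\subseteq B_{r_i}$, so a child with $|B_{r_i}|\le 2kd$ has $|S_i|\le 2kd$. Two such children would then contradict the pairwise property.
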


\begin{proof}
We proceed by induction on $|V(G)|$. 
In the base case, if $|V(G)|\leq 4kd$ then the tree-partition with one bag $B_r:=V(G)$ satisfies the claim (in particular, satisfying \labelcref{lem: NewTreePartitionWidthBound (d)}). 
Now assume that $|V(G)|\geq 4kd$.
If $|S|<4kd$ then add vertices to $S$ so that $|S|=4kd$. 

Let $\TT$ be a tree-decomposition of $G$ with width at most $k-1$.
By \cref{TreewidthSetSep} with $b=2d-1$ and \cref{TwoBags}, 
there is a set $Y\subseteq V(G)$, consisting of the union of $4d-3$ bags in $\TT$, such that for each component $C$ of $G-Y$, we have 
$|S\cap V(C)| \leq \frac{|S|}{b+1} = 2k$, and there is a set $Y_C$ contained in the union of two bags of $\TT$, such that
$N(V(C)) \cap Y = Y_C$.  
So $|Y_C|\leq 2k$. 
Let $S_C$ be the set of vertices in $C-S$ that are adjacent to at least one vertex in $S\cup Y$.
So $S_C$ is contained in the neighbourhood of $(S\cap V(C)) \cup Y_C$, implying $|S_C|\leq (2k+2k)d=4kd$. 

For any subgraph $Q$ of $G-(S\cup Y)$, let $\gamma(Q):= |V(Q)\cap N_G(S\cup Y)|$. 
Each component $Q$ of $G-(S\cup Y)$ is a subgraph of some component $C$ of $G-Y$, so $\gamma(Q)\leq|S_{C}|\leq4kd$,  and $\gamma(G-(S\cup Y))\leq d|S\cup Y|$. Define a \defn{pseudo-component} to be any union of components of $G-(S\cup Y)$. Let $G_1,\dots,G_m$ be a partition of $G-(S\cup Y)$ into pseudo-components, such that $\gamma(G_i)\leq 4kd$ for each $i\in\{1,\dots,m\}$, and $m$ is minimum. This is well-defined since the components of $G-(S\cup Y)$ are candidates. 

For each $i\in\{1,\dots,m\}$, let $S_i:= V(G_i)\cap N_G(S\cup Y)$, which has size $\gamma(G_i)\leq 4kd$. 
By induction, $G_i$ has a tree-partition $(B_x:x\in V(T_i))$ rooted at $r_i\in V(T_i)$ with properties \labelcref{lem: NewTreePartitionWidthBound (a)}--\labelcref{lem: NewTreePartitionWidthBound (e)} above. 
Let $T$ be the tree obtained from the disjoint union $T_1\cup\dots\cup T_m$ by adding a new root node $r$ adjacent to $r_1,\dots,r_m$. 
Define $B_r:= S\cup Y$. Since $N_G(S\cup Y)\subseteq B_{r_1}\cup\dots\cup B_{r_m}$, we have that $(B_x:x\in V(T))$ is a tree-partition of $G$. 

Property \labelcref{lem: NewTreePartitionWidthBound (a)} holds, since it holds for each $T_i$ and 
$|B_r|\leq |S|+|Y| \leq 4kd + (4d-3)k = k(8d -3)$. Property \labelcref{lem: NewTreePartitionWidthBound (b)} holds since $S\subseteq B_r$ by construction. 

We now prove  \labelcref{lem: NewTreePartitionWidthBound (c)}  for $T$. Since \labelcref{lem: NewTreePartitionWidthBound (c)} holds for each $T_i$, and $r$ has $m$ children, it suffices to show that $m\leq 4d-2$. 
By the minimality of $m$, we have $\gamma(G_i)+\gamma(G_j)>4kd$ for distinct $i,j\in\{1,\dots,m\}$. Thus
\[ (m-1)\, \gamma(G-(S\cup Y)) = (m-1)\sum_{i\in[m]}\gamma(G_i) = \sum_{1\leq i<j\leq m} (\gamma(G_i)+\gamma(G_j)) > \binom{m}{2} 4kd.\]
Hence 
\[ m <\frac{\gamma(G-(S\cup Y))}{2kd} \leq \frac{d|S\cup Y|}{2kd}  \leq \frac{dk(8d-3)}{2kd} = 4d - \frac32.\]
Therefore $m\leq 4d-2$, as desired. 

Now we prove \labelcref{lem: NewTreePartitionWidthBound (d)} for $T$. Suppose that $|B_x|\leq 2kd$ for some $x\in V(T)$. 
Since \labelcref{lem: NewTreePartitionWidthBound (d)} holds for $T_i$, if $x\in V(T_i)$ then $x$ is a leaf of $T_i$, implying that $x$ is a leaf of $T$.
Since $|B_r|\geq |S|>2kd$, property \labelcref{lem: NewTreePartitionWidthBound (d)} holds for $T$. 

Now we prove \labelcref{lem: NewTreePartitionWidthBound (e)} for $T$. Consider $x\in V(T)$. If $x\in V(T_i)$ then there is at most one child $y$ of $x$ in $T_i$ with $|B_y|\leq 2kd$, since \labelcref{lem: NewTreePartitionWidthBound (e)} holds in $T_i$. Now consider $x=r$. Suppose for the sake of contradiction that there are distinct children $y$ and $z$ of $r$ in $T$ with $|B_y|\leq 2kd$ and $|B_z|\leq 2kd$. So $y$ is the root of $T_i$, and $z$ is the root of $T_j$, for some distinct $i,j\in\{1,\dots,m\}$. By property \labelcref{lem: NewTreePartitionWidthBound (d)} for $T_i$, $y$ is a leaf of $T_i$. Thus $\gamma(G_i)=|S_i|\leq |V(G_i)|=|B_y|\leq 2kd$. Similarly, 
$\gamma(G_j)\leq 2kd$. Hence $\gamma(G_i)+\gamma(G_j)\leq 4kd$, contradicting that $\gamma(G_i)+\gamma(G_j)>4kd$. This proves property \labelcref{lem: NewTreePartitionWidthBound (e)} for $T$.
\end{proof}

%
%

\ImprovedTreePartition*

\begin{proof}
Let $(B_x:x\in V(T))$ be the tree-partition from \cref{lem: NewTreePartitionWidthBound}. By \labelcref{lem: NewTreePartitionWidthBound (a)},  the width is at most $k(8d-3)$. 
Since each node of $T$ has at most $4d-2$ children, $\Delta(T)\leq 4d-1$.
It remains to show that $|V(T)|\leq \ceil{\frac{|V(G)|}{kd}}$. 
Say $x\in V(T)$ is \defn{small} if $|B_x|\leq 2kd$. 
If the root node $r$ is small, then $r$ has no children by \labelcref{lem: NewTreePartitionWidthBound (d)}, implying $|V(T)|=1$, as desired. 
Now assume that $r$ is not small. 
Let $s$ be the number of small nodes in $T$. 
By property \labelcref{lem: NewTreePartitionWidthBound (d)}, each small node $x$ is a leaf of $T$; let $x'$ be the parent of $x$ in $T$, which is not a leaf and thus not small. 
By property \labelcref{lem: NewTreePartitionWidthBound (e)}, each node of $T$ has at most one small child. So $\{xx':x\text{ is small}\}$ is a matching in $T$, 
implying $s\leq |V(T)|/2$ and at least $|V(T)|/2$ nodes of $T$ are not small. 
Hence $|V(G)| \geq 2kd\,|V(T)|/2=kd|V(T)|$ and $|V(T)|\leq \frac{|V(G)|}{kd}$. 
\end{proof}

\subsection{Lower Bound}
\label{TPW-LowerBound}

We now prove a lower bound on the degree of the underlying tree in a tree-partition of a tree. 
For integers $\Delta\geq2$ and $d\geq1$, let $X_{\Delta,d}$ be the tree rooted at a vertex $r$ such that every leaf is at distance $d$ from $r$ and every non-leaf vertex has degree $\Delta$. Observe that $X_{\Delta,d}$ has the maximum number of vertices in a tree with maximum degree $\Delta$ and radius $d$, where  
\[ |V(X_{\Delta,d})| 
= 1+\Delta\sum_{i=0}^{d-1}(\Delta-1)^i.\]
Note that $|V(X_{2,d})|= 2d+1$, and
 if $\Delta\geq 3$ then 
\[ (\Delta-1)^d \leq |V(X_{\Delta,d})| 
=1+\tfrac{\Delta}{\Delta-2}\big((\Delta-1)^d-1\big)
\leq 3(\Delta-1)^d.\]

\begin{restatable}{prop}{LowerBound}
\label{LowerBound}
For any integer $\Delta\geq 3$, there exists $\alpha>0$ such that there are infinitely many trees $X$ with maximum degree $\Delta$ such that for every tree $T$ with maximum degree less than $\Delta$, every $T$-partition of $X$ has width at least $|V(X)|^\alpha$. Moreover, if $\Delta=3$ then $\alpha$ can be taken to be arbitrarily close to $1$. 
\end{restatable}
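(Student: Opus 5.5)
The plan is to take $X:=X_{\Delta,d}$ for each $d\geq 1$, which gives infinitely many trees with maximum degree $\Delta$. The key observation is a distance argument. Let $(B_x:x\in V(T))$ be any $T$-partition of $X$ with width $w$, where $\Delta(T)\leq\Delta-1$, and let $x_0$ be the node with $r\in B_{x_0}$. Each edge of $X$ joins vertices in equal or adjacent bags. So by induction along the path from $r$, every vertex of $X$ at distance $i$ from $r$ lies in a bag $B_y$ with $\dist_T(x_0,y)\leq i$. Since $X$ has radius $d$ about $r$, all of $V(X)$ lies in the bags indexed by the ball $N^d_T[x_0]$ of radius $d$ about $x_0$ in $T$. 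Hence
\[ |V(X)| \leq w\,\bigl|N^d_T[x_0]\bigr|. \]

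Next I will bound the ball size using the count stated just before the proposition. A ball of radius $d$ in a tree of maximum degree at most $\Delta-1$ has at most $|V(X_{\Delta-1,d})|$ vertices. This is because the ball embeds into $X_{\Delta-1,d}$ rooted at its centre, and when $\Delta-1=2$ it is simply the maximum number of vertices within distance $d$ on a path.

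\emph{Case $\Delta=3$.} Here $T$ is a path, so the ball has at most $2d+1$ vertices. Using $|V(X)|\geq 2^d$, we get $w\geq 2^d/(2d+1)$. Since $|V(X)|\leq 3\cdot 2^d$, this gives $w\geq |V(X)|^{1-o(1)}$ as $d\to\infty$. So for any fixed $\alpha<1$, the bound $w\geq|V(X)|^\alpha$ holds for all sufficiently large $d$, which still leaves infinitely many trees.

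\emph{Case $\Delta\geq 4$.} Here $\Delta-1\geq 3$, so the displayed estimate applies to give $|N^d_T[x_0]|\leq 3(\Delta-2)^d$. Combined with $|V(X)|\geq(\Delta-1)^d$, this yields
\[ w\geq \tfrac13\Bigl(\tfrac{\Delta-1}{\Delta-2}\Bigr)^d. \]
We also have $|V(X)|\leq 3(\Delta-1)^d$. Set $\beta:=\log\frac{\Delta-1}{\Delta-2}\big/\log(\Delta-1)>0$ and choose any $0<\alpha<\beta$. Then $w\geq |V(X)|^\alpha$ for all sufficiently large $d$, because the constant factors $3$ and $\tfrac13$ are absorbed by the gap between $\alpha$ and $\beta$.

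The only delicate points are bookkeeping. The first is the reduction of the ball in $T$ to the extremal tree $X_{\Delta-1,d}$; this is immediate since a tree of maximum degree $\Delta-1$ has at most $(\Delta-1)(\Delta-2)^{i-1}$ nodes at distance $i\geq1$ from any node. The second is handling small $d$ by discarding finitely many trees, which I expect to be routine. There is no serious obstacle; the main care needed is in choosing $\alpha$ strictly below the limiting exponent, so the constants do not matter.
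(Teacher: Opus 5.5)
Your proposal is correct and follows essentially the same route as the paper: take $X_{\Delta,d}$, use the edge condition to confine all used bags to within distance $d$ of the node containing the root, bound the number of such nodes by $|V(X_{\Delta-1,d})|$ (namely $2d+1$ or at most $3(\Delta-2)^d$), and finish by pigeonhole. The differences are cosmetic. You bound the radius-$d$ ball in $T$ rather than asserting that $T$ itself has radius at most $d$, which is slightly more careful about empty bags. You also take any $\alpha$ below the limiting exponent $\log_{\Delta-1}\frac{\Delta-1}{\Delta-2}$, whereas the paper uses the explicit $d_0$-dependent choice $1-\log_{\Delta-1}(3^{1/d_0}(\Delta-2))$, which is a particular value below that exponent.
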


\begin{proof}
First suppose that $\Delta\geq 4$. 
Let $d_0\geq1$ be a sufficiently large integer so that $(\frac{\Delta-1}{\Delta-2})^{d_0}>3$. 
Let $\alpha:= {1- \log_{\Delta-1}(3^{1/d_0}(\Delta-2))}$, which is positive by the choice of $d_0$. 
Let $d$ be an integer with $d\geq d_{0}$. 
It follows that 
$(\Delta-1)^{(1-\alpha)d} \geq 3(\Delta-2)^d $. 
Consider any tree-partition $(B_u:u\in V(T))$ of $X_{\Delta,d}$, where $T$ is any tree with maximum degree at most $\Delta-1$. Let $z$ be the vertex of $T$ such that the root $r\in B_z$. Since adjacent vertices in $X_{\Delta,d}$ belong to adjacent parts or the same part in $T$, every vertex in $T$ is at distance at most $d$ from $z$. Thus $T$ has radius at most $d$, and \[|V(T)|\leq|V(X_{\Delta-1,d})| 
  \leq 3(\Delta-2)^d 
  \leq (\Delta-1)^{(1-\alpha)d} \leq
|V(X_{\Delta,d})|^{1-\alpha}  .\]
By the pigeon-hole principle, there is a vertex $u\in V(T)$ such that 
$|B_u|\geq \frac{|V(X_{\Delta,d})|}{|V(T)|}\geq |V(X_{\Delta,d})|^\alpha$.

Now assume that $\Delta=3$. Let $\alpha\in(0,1)$, let $d_0\geq1$ be a sufficiently large integer so that $2d_0+1 \leq 2^{(1-\alpha)d_0}$, and let $d$ be an integer with $d\geq d_0$.
So $2d+1 \leq 2^{(1-\alpha)d}$. Consider any tree-partition $(B_u:u\in V(T))$ of $X_{3,d}$, where $T$ is any tree with maximum degree at most $2$. By the argument above, $T$ has radius at most $d$, implying \[|V(T)|\leq|V(X_{2,d})|=2d+1 \leq 2^{(1-\alpha)d} \leq |V(X_{3,d})|^{1-\alpha}.\]
Again, there is a vertex $u\in V(T)$ such that 
$|B_u|\geq \frac{|V(X_{3,d})|}{|V(T)|}\geq |V(X_{3,d})|^{\alpha}$.
\end{proof}

\cref{LowerBound} shows that the $\Delta(T)\in O(\Delta(G))$ bound in \cref{ImprovedTreePartition} is best possible, even when $G$ is a tree. 

\section{Domino Treewidth Lower Bound}
\label{DominoTreewidthLowerBound}

This section proves \cref{DTWLowerBound}, which says that for any integers $k\geq 2$ and $\Delta\geq k+7$ there exists a graph $G$ with treewidth at most $k$, maximum degree at most $\Delta$, and domino treewidth $\Omega(k\Delta^2)$.

For integers $d\geq 2$ and $n\geq 0$, as illustrated in \cref{OuterplanarConstruction}, recursively define the following graph \defn{$G_{d,n}$} with $V(G_{d,n}) = V_0\cup \cdots\cup V_n$.  Let $G_{d,0}:=K_2$ and $V_0:=V(G_{d,0})$. Now suppose that $G_{d,i}$ and $V_0, \dots, V_i$ have been constructed. To construct $G_{d,i+1}$ from $G_{d,i}$, for every edge $uv$ of $G_{d,i}[V_i]$, add a $(2d-1)$-vertex path $P$ and make the first $d$ vertices of $P$ adjacent to $u$, and make the last $d$ vertices of $P$ adjacent to $v$. Let $V_{i+1}:=V(G_{d,i+1})\setminus V(G_{d,i})$.

\input{OuterplanarConstruction}

Suppose $V_0=\{u,v\}$. For $n\geq 1$, let $w$ be the unique common neighbour of $u$ and $v$ in $G_{d,n}$, and let $C$ be the component of $G_{d,n}-\{v,w\}$ that contains $u$. Let $\text{\defn{$H_{d,n}$}}:=G_{d,n}-V(C)$.

For integers $d\geq 2$, $n\geq 1$ and $i\in \{0, \dots, n\}$, define
\[\text{\defn{$L_i(G_{d,n})$}}:=V_i \text{ and } \text{\defn{$L_i(H_{d,n})$}}:=V(H_{d,n})\cap V_i.\]

\begin{lem}\label{lem : H}
Let $d\geq 2$ and $n\geq 2$ be integers, and let $H:=H_{d,n}$. If $(B_x : x\in V(T))$ is a domino tree-decomposition of $H$ and there exists a node $b\in V(T)$ such that
\[L_0(H) \cup L_1(H) \subseteq B_b,\]
then at least one of the following hold:
\begin{enumerate}[label=(\roman*)]
    \item\label{lem : H (i)} $L_1(H)$ lies in the intersection of two bags,
    \item\label{lem : H (ii)} there exists $p\in L_1(H)$ such that $\{p\} \cup (N_H(p) \cap L_2(H)) \subseteq B_b$, or
    \item\label{lem : H (iii)} there exists an edge $u'v'\in E(H[L_1(H)])$ and distinct nodes $a',c'\in V(T)\setminus \{b\}$ such that $u'\in B_{a'}$, $\{u',v'\}\subseteq B_b$, and $v'\in B_{c'}$.
\end{enumerate}
\end{lem}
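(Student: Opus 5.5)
The plan is to exploit the spread-$2$ condition directly: every vertex of $L_1(H)$ already lies in $B_b$, so it lies in at most one further bag. Assuming \ref{lem : H (ii)} and \ref{lem : H (iii)} both fail, I will show that all vertices of $L_1(H)$ share the same second bag, which gives \ref{lem : H (i)}.

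The first step is to pin down the structure of $H=H_{d,n}$. Write $V_0=\{u,v\}$ and let $P=p_1p_2\cdots p_{2d-1}$ be the path added for the edge $uv$, so that $w=p_d$. Deleting $v$ and $w$, the component $C$ containing $u$ consists of $u$, the vertices $p_1,\dots,p_{d-1}$, and everything hanging off the edges $up_i$ and $p_ip_{i+1}$ for $i\le d-1$. The edge $p_{d-1}p_d$ is included here, since its attached path is adjacent to $p_{d-1}$.

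Hence $L_0(H)=\{v\}$ and $L_1(H)=\{p_d,\dots,p_{2d-1}\}$. In particular, $H[L_1(H)]$ is the path $p_dp_{d+1}\cdots p_{2d-1}$, which is connected. Each edge $p_jp_{j+1}$ with $d\le j\le 2d-2$ lies in $H[V_1]$, so its added $(2d-1)$-vertex path lies in $L_2(H)$. This path has $d\ge 2$ vertices adjacent to $p_j$ and $d$ vertices adjacent to $p_{j+1}$. Since $d\geq 2$, $L_1(H)$ has at least two vertices, so every $p\in L_1(H)$ has at least one neighbour in $L_2(H)$ (here $n\geq 2$ is used, so that $L_2$ exists).

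Now suppose \ref{lem : H (ii)} fails. Then each $p\in L_1(H)$ has a neighbour $q\in L_2(H)$ with $q\notin B_b$. The edge $pq$ is contained in some bag, which is not $B_b$, so $p$ lies in some bag $B_{a_p}$ with $a_p\neq b$. Since $p\in B_b$ and $p$ has spread at most $2$, the node $a_p$ is the unique node other than $b$ whose bag contains $p$.

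Next suppose \ref{lem : H (iii)} also fails, and consider an edge $p_jp_{j+1}$ of $H[L_1(H)]$. Both endpoints lie in $B_b$, and $p_j\in B_{a_{p_j}}$ and $p_{j+1}\in B_{a_{p_{j+1}}}$ with both nodes different from $b$. If $a_{p_j}\neq a_{p_{j+1}}$, then taking $u'=p_j$, $v'=p_{j+1}$, $a'=a_{p_j}$ and $c'=a_{p_{j+1}}$ gives \ref{lem : H (iii)}, a contradiction. So $a_{p_j}=a_{p_{j+1}}$ for every edge of the path.

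Because $H[L_1(H)]$ is connected, all the $a_p$ equal a single node $a\neq b$. Therefore $L_1(H)\subseteq B_a\cap B_b$, which is \ref{lem : H (i)}.

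The only delicate step is the structural description of $H$. Specifically, I must confirm that $L_1(H)$ is exactly $\{p_d,\dots,p_{2d-1}\}$ and induces a path, and that the $V_2$-paths on the edges $p_jp_{j+1}$ with $j\ge d$ survive in $H$ while the one on $p_{d-1}p_d$ does not. Once this is settled, the rest is the short spread-counting argument above.
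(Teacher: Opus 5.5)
Your proof is correct and follows essentially the same route as the paper. Each vertex of $L_1(H)$ lies in $B_b$ and in at most one other bag; a vertex lying only in $B_b$ gives (ii) by the edge-property; a change of that second bag between consecutive vertices of the path $H[L_1(H)]$ gives (iii); and otherwise all of $L_1(H)$ lies in some $B_a\cap B_b$, which is (i). The paper scans from one end of the path to the first vertex where the second bag changes, whereas you argue contrapositively by propagating the node $a$ along the connected path (your check that $N_H(p)\cap L_2(H)\neq\emptyset$ is not needed, since the failure of (ii) already supplies the neighbour $q\notin B_b$).
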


\begin{proof}
By the definition of $H$, $L_1(H)$ induces a path $(x_1, \dots, x_d)$.

If $x_1$ lies in the bag at exactly one node, then $x_1\in B_b$ and the edge-property of tree-decompositions implies that $\{x_1\}\cup N_H(x_1) \subseteq B_b$, so \labelcref{lem : H (ii)} holds. Hence it may be assumed that there exists $a'\in V(T)\setminus \{b\}$ such that $x_1\in B_{a'}$. Let $j\in [d]$ be maximum such that $\{x_1, \dots, x_j\}\subseteq B_{a'}$. For each $i\in [j]$ we have $x_i\in B_{a'}\cap B_b$. Therefore if $j=d$, then $L_1(H)\subseteq B_{a'}\cap B_b$ and \labelcref{lem : H (i)} holds.

Now suppose $j<d$. If there exists $c'\in V(T)\setminus \{a',b\}$ such that $x_{j+1}\in B_{c'}$, then \labelcref{lem : H (iii)} holds with $u':=x_j$ and $v':=x_{j+1}$. On the other hand, we have that $x_{j+1}\not\in B_t$ for all $t\in V(T)\setminus \{a',b\}$. Moreover, $x_{j+1}\not\in B_{a'}$ by maximality of $j$. Hence $x_{j+1}$ lies in the bag at exactly one node, namely $b$. Then $\{x_{j+1}\}\cup N_H(x_{j+1})\subseteq B_b$ and \labelcref{lem : H (ii)} holds. This concludes the proof of the lemma.
\end{proof}

\begin{lem}\label{lem : G}
Let $d\geq 2$ and $n\geq 2$ be integers, and let $G:=G_{d,n}$ with $L_0(G)=\{u,v\}$. If $(B_x : x\in V(T))$ is a domino tree-decomposition of $G$ and there exists distinct nodes $a,b,c\in V(T)$ such that
\[u\in B_a,\quad \{u,v\}\subseteq B_b,\quad  v\in B_c,\]
then at least one of the following hold:
\begin{enumerate}[label=(\roman*)]
    \item\label{lem : G (i)} $\{u\}\cup N_G(u) \subseteq B_b$ or $\{v\}\cup N_G(v) \subseteq B_b$,
    \item\label{lem : G (ii)} there exists $p\in L_1(G)$ such that $\{p\} \cup (N_G(p) \cap L_2(G)) \subseteq B_b$, or
    \item\label{lem : G (iii)} there exists an edge $u'v'\in E(G[L_1(G)])$ and distinct nodes $a',c'\in V(T)\setminus \{b\}$ such that $u'\in B_{a'}$, $\{u',v'\}\subseteq B_b$, and $v'\in B_{c'}$.
\end{enumerate}
\end{lem}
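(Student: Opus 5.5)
Since the decomposition is domino and $u\in B_a\cap B_b$ with $a\neq b$, the bags containing $u$ are exactly $B_a$ and $B_b$, so $\{a,b\}$ is an edge of $T$. Likewise $v$ lies exactly in $B_b$ and $B_c$, and $bc\in E(T)$. We use the structure of $L_1(G)$: it is the $(2d-1)$-vertex path $P=(x_1,\dots,x_{2d-1})$ attached to the edge $uv$, where $x_1,\dots,x_d$ are adjacent to $u$ and $x_d,\dots,x_{2d-1}$ are adjacent to $v$. (For $n\geq1$, $N_G(u)=\{v,x_1,\dots,x_d\}$ in $G_{d,n}$, since $u\in V_0$ only gains neighbours at the first step.)

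First, the edge-property for $ux_i$ ($i\le d$) gives a bag containing both $u$ and $x_i$, so $x_i\in B_a\cup B_b$. Similarly $x_i\in B_b\cup B_c$ for $i\ge d$. If $x_1,\dots,x_d$ all lie in $B_b$, then $\{u\}\cup N_G(u)\subseteq B_b$ and (i) holds. Symmetrically, if $x_d,\dots,x_{2d-1}\subseteq B_b$, then (i) holds for $v$. So we may assume some $x_i$ with $i\le d$ is not in $B_b$, hence lies in $B_a$, and some $x_j$ with $j\ge d$ is not in $B_b$, hence lies in $B_c$.

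Next we walk along $P$ from $x_i$ to $x_j$ and track, for each vertex, the set of nodes whose bags contain it (at most two nodes). Note that $x_d$ is adjacent to both $u$ and $v$, so it meets both $\{a,b\}$ and $\{b,c\}$; therefore $x_d\in B_b$, or $x_d\in B_a\cap B_c$. In the latter case the subtree of $x_d$ would contain $a$ and $c$ and hence the path $a,b,c$, so $x_d\in B_b$ anyway, contradicting spread~2. Thus $x_d\in B_b$, and in particular $i<d<j$.

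Consider the consecutive vertices of $P$ between $x_i$ and $x_j$. Each $x_\ell$ in this range is either in $B_b$ only, or in $B_b$ and some other bag, or misses $B_b$. If some $x_\ell$ with $i<\ell<j$ lies only in $B_b$, then every neighbour of $x_\ell$ is in $B_b$, and (ii) holds with $p=x_\ell$. Otherwise every vertex strictly between $x_i$ and $x_j$ lies in some bag other than $B_b$. Let $\ell$ be maximal with $i\le\ell<d$ such that $x_\ell\notin B_b$, and let $m$ be the least index greater than $d$ with $x_m\notin B_b$. Then $x_{\ell+1},\dots,x_{m-1}$ all lie in $B_b$ and each has a second node. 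By the edge-property, $x_{\ell+1}$'s second node must be $a$ (it shares a bag with $x_\ell\in B_a$, and that bag is not $b$), and $x_{m-1}$'s second node is $c$. Hence somewhere along $x_{\ell+1},\dots,x_{m-1}$ the second node changes. We take the first consecutive pair $u'=x_t$, $v'=x_{t+1}$, both in $B_b$, whose second nodes $a'\neq c'$ differ, with $a',c'\neq b$. This gives (iii).

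The main obstacle is the bookkeeping in this last step: justifying that the second nodes of $x_{\ell+1}$ and $x_{m-1}$ are forced to be $a$ and $c$, including the degenerate case $\ell+1=m-1=d$. In that case $x_d$ would need to be in $B_a$, $B_b$ and $B_c$, which is impossible, so either some vertex is in $B_b$ only, or a change of second node occurs. I would handle this with a careful case check.
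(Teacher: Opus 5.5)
Your proof is correct and follows essentially the paper's argument. Both proofs locate, along the path $L_1(G)=(x_1,\dots,x_{2d-1})$, either a vertex lying only in $B_b$ (giving (ii)) or a consecutive pair in $B_b$ whose other bags differ (giving (iii)).

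The bookkeeping differs slightly. The paper works with the triangles $\{u,x_i,x_{i+1}\}$, each of which lies in $B_a$ or $B_b$. It takes the last such triangle in $B_a$ and inspects only the next vertex $x_{i+2}$, which is automatically outside $B_a$. So it needs only one end of the walk, after excluding $i+1=d=j$ via the triangle $\{u,v,x_d\}$; this matches your observation that $x_d\in B_b$ and cannot also lie in both $B_a$ and $B_c$. Your two-ended argument, in which the second node changes from $a$ to $c$, works equally well.

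The one step whose stated justification is not sufficient is the one you flagged. From ``$x_{\ell+1}$ shares a bag other than $B_b$ with $x_\ell\in B_a$'' you cannot yet conclude that this bag is $B_a$, because $x_\ell$ might also lie in a second bag $B_f$ with $f\notin\{a,b\}$. The fix is one line. Since $\ell+1\le d$, the triangle $\{u,x_\ell,x_{\ell+1}\}$ lies in a single bag. That bag must be $B_a$ or $B_b$, as these are the only bags containing $u$, and it is not $B_b$ because $x_\ell\notin B_b$. Hence $x_{\ell+1}\in B_a$. Alternatively, if the shared node were some $f\neq a$, then $ab$, $af$ and $bf$ would all be edges of $T$, which is impossible in a tree.

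Symmetrically, the triangle $\{v,x_{m-1},x_m\}$ must lie in $B_c$. With both ends fixed, your handling of the degenerate case $\ell+1=m-1=d$ and the final extraction of $a'\neq c'$ are fine.
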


\begin{proof}
By the definition of $G$, $L_1(G)$ induces a path $(x_1, \dots, x_{2d-1})$ such that $N_G(u) = \{v, x_1, \dots, x_d\}$ and $N_G(v)=\{u, x_d, \dots, x_{2d-1}\}$.

It is well-known that in every tree-decomposition, each clique is in some bag (see \citep[Corollary~12.3.5]{Diestel5}). Then since $u$ only appears in the bags $B_a$ and $B_b$, for all $i\in [d-1]$, the triangle $\{u, x_i, x_{i+1}\}$ lies in $B_a$ or in $B_b$. Similarly, for all $j\in \{d, \dots, 2d-2\}$, the triangle $\{v,x_j, x_{j+1}\}$ lies in $B_b$ or in $B_c$. If $\{u,x_i, x_{i+1}\}\subseteq B_b$ for all $i\in [d-1]$, then \labelcref{lem : G (i)} holds. Similarly, if $\{v,x_j, x_{j+1}\}\subseteq B_b$ for all $j\in \{d, \dots, 2d-2\}$, then \labelcref{lem : G (i)} holds. Hence it may be assumed that there exists $i\in [d-1]$ and $j\in \{d, \dots, 2d-2\}$ such that $\{u,x_i, x_{i+1}\}\subseteq B_a$ and $\{v,x_j, x_{j+1}\}\subseteq B_c$. Consider the maximum such $i$ and minimum such $j$. If $i+1=d=j$, then $x_d\in B_a\cap B_c$. However, since $a,b,c$ are distinct, $u\in B_a\cap B_b$ and $v\in B_b\cap B_c$ together imply that the triangle $\{u,v,x_d\}$ lies in $B_b$, thus $x_d\in B_a\cap B_b\cap B_c$, a contradiction. Consequently, $i<d-1$ or $j>d$. The following only analyses the former case as the latter case is symmetric: By choice of $i$, $\{u,x_i, x_{i+1}\}\subseteq B_a$, $\{u, x_{i+1}, x_{i+2}\}\subseteq B_b$, and $x_{i+2}\not\in B_a$. If there exists $c'\in V(T)\setminus \{a,b\}$ such that $x_{i+2}\in B_{c'}$, then \labelcref{lem : G (iii)} holds with $u':=x_{i+1}$, $v':=x_{i+2}$, and $a':=a$. On the other hand, we have that $x_{i+2}\not\in B_t$ for all $t\in V(T)\setminus \{b\}$, so $x_{i+2}$ lies in the bag at exactly one node, namely $b$. Then by the edge-property of tree-decompositions, $\{x_{i+2}\}\cup N_G(x_{i+2}) \subseteq B_b$ and \labelcref{lem : G (ii)} holds. This concludes the proof of the lemma.
\end{proof}

The recursive definition of $G_{d,n}$ yields the following observations about $G_{d,n}$ and $H_{d,n}$ that are key to the proof of the next lemma.

\begin{obs}\label{obs : G' hanging off}
Let $d\geq 2$ and $n\geq 2$ be integers, let $D \in \{G_{d,n}, H_{d,n}\}$, and let $u'v'\in E(D[L_1(D)])$. Let $C$ be the component of $D-\{u',v'\}$ that contains $L_0(D)$. Then, as illustrated in \cref{fig : G' hanging off}, $G':=D-V(C)$ is a copy of $G_{d,n-1}$ such that $L_0(G') = \{u',v'\}$ and $V(G') \cap L_0(D) = \emptyset$. We say that $G'$ is the copy of $G_{d,n-1}$ in $D$ hanging off $u'v'$.
\end{obs}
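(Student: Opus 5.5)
The plan is to read the structure of $G'$ directly off the recursive construction of $G_{d,n}$, and to argue by induction on $n$ that the construction is ``self-similar''. The key structural fact is that each edge $u'v'$ of $D[L_1(D)]$ plays, in the later steps, exactly the role that the initial edge of $V_0$ plays in $G_{d,n-1}$. Indeed, $L_1(D)\subseteq V_1$ and $u'v'\in E(G_{d,1}[V_1])$. The vertices added at step $i+1$ are attached only to edges of $G_{d,i}[V_i]$. So I will first define $D_{u'v'}$ to be the subgraph consisting of $u',v'$ together with every vertex $x$ added at some step $i+1\geq 2$ whose ``ancestor edge'' traces back to $u'v'$. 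Here, when $x$ lies on the path $P$ added for an edge $e\subseteq V_i$, the parent of $x$ is $e$, and we iterate until reaching an edge of $V_1$.

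The steps are as follows. First, show by induction on $i$ that the ancestor edges of level-$(i+1)$ vertices in $D_{u'v'}$ are exactly the edges of $D_{u'v'}[V_{i+1}]$ that arose from paths hanging off edges of $D_{u'v'}[V_i]$. Hence $D_{u'v'}$, with levels $L_j:=V(D_{u'v'})\cap V_{j+1}$, satisfies the same recursion as $G_{d,n-1}$ with $L_0=\{u',v'\}$. This gives an isomorphism to $G_{d,n-1}$ with $L_0(G')=\{u',v'\}$. Second, show that $D_{u'v'}=D-V(C)$. The main point here is that every edge of $D$ incident to a vertex of $D_{u'v'}-\{u',v'\}$ has its other end in $D_{u'v'}$. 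This holds because new vertices are adjacent only to their own path and to the endpoints of their parent edge, and those endpoints are either $u',v'$ or have the same ancestor. Therefore $D_{u'v'}-\{u',v'\}$ is a union of components of $D-\{u',v'\}$. Conversely, everything outside $D_{u'v'}$, namely $L_0(D)$, the other level-$1$ vertices, and descendants of other level-$1$ edges, is connected to $L_0(D)$ avoiding $u',v'$. For $D=G_{d,n}$, every vertex of $V_1$ is adjacent to $u$ or $v$. Each descendant reaches its ancestor edge's endpoints, and those are not both in $\{u',v'\}$ unless the ancestor edge is $u'v'$ itself. This shows $C$ is exactly the complement of $D_{u'v'}$. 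It also gives $V(G')\cap L_0(D)=\emptyset$, since $u',v'\in V_1$.

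For $D=H_{d,n}$, I will repeat the argument inside $H$, noting that $H$ is itself obtained by the same recursion with $L_0(H)=\{v,w\}$ and $L_1(H)$ the path hanging off $vw$. Alternatively, observe that $H_{d,n}$ is the union of $\{v\}$ with the copy of $G_{d,n-1}$ hanging off the level-1 edge containing $w$, so the same descendant argument applies verbatim. The last point to check is that $L_1(H)\cup\{v,w\}$ is connected avoiding $u',v'$ when $u'v'$ lies in $L_1(H)$. This follows since each vertex of $L_1(H)$ is adjacent to $v$ or $w$, and $v,w$ are adjacent.

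I expect the main obstacle to be bookkeeping rather than an idea: making the ``ancestor edge'' notion precise and verifying the closure property in the second step. I would handle this cleanly with a short induction on the step index.
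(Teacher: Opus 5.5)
Your argument for $D=G_{d,n}$ is correct. It is the unpacking of the recursive definition that the paper leaves implicit: the paper states this as an observation supported only by the figure. The vertices $u',v'$ and the descendants of $u'v'$ reproduce the recursion for $G_{d,n-1}$ started at $u'v'$. Their only contact with the rest of $D$ is through $u'$ and $v'$, and every other vertex reaches $L_0(D)$ while avoiding $u',v'$.

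The case $D=H_{d,n}$, however, is argued for the wrong graph. Write the level-$1$ path of $G_{d,n}$ as $(x_1,\dots,x_{2d-1})$, with $x_1,\dots,x_d$ adjacent to $u$ and $x_d,\dots,x_{2d-1}$ adjacent to $v$, so $w=x_d$. Then $V(H_{d,n})$ consists of $v$, the vertices $x_d,\dots,x_{2d-1}$, and all descendants of the $d-1$ edges $x_jx_{j+1}$ with $d\leq j\leq 2d-2$. This has three consequences:
\begin{itemize}
\item $L_0(H_{d,n})=V(H_{d,n})\cap V_0=\{v\}$, not $\{v,w\}$.
\item $w$ itself lies in $L_1(H_{d,n})$, which is a $d$-vertex path rather than a $(2d-1)$-vertex path.
\item Nothing hangs off $vw$, because $vw$ joins levels $0$ and $1$, whereas paths are attached only to edges inside a single level.
\end{itemize}
This is also the structure the paper uses later, e.g.\ $|L_0\cup L_1\cup L_2|=1+d+(d-1)(2d-1)$. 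Your alternative description is correct only when $d=2$. For $d\geq 3$, $H_{d,n}$ is $v$ plus $d-1$ copies of $G_{d,n-1}$ glued along $L_1(H_{d,n})$, not $v$ plus a single copy.

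Consequently your final justification fails as stated. You argue that each vertex of $L_1(H)$ is adjacent to $v$ or $w$ and that $vw$ is an edge. But when $u'v'=x_dx_{d+1}$, the vertex $w=u'$ is itself deleted.

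The repair is short:
\begin{itemize}
\item Every vertex of $L_1(H_{d,n})$ is adjacent to $v$. Hence $L_1(H_{d,n})\setminus\{u',v'\}$, together with all descendants of the other edges of $H_{d,n}[L_1(H_{d,n})]$, lies in the component $C$ of $H_{d,n}-\{u',v'\}$ containing $v$.
\item The descendant set of $u'v'$ lies inside $V(H_{d,n})$, and it keeps its closure property in the induced subgraph $H_{d,n}$.
\end{itemize}
Therefore $H_{d,n}-V(C)$ is exactly your $D_{u'v'}$, and the rest of your $G_{d,n}$ argument applies unchanged.
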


\begin{figure}[H]
\centering

\newcommand{\blobcolour}{LightGray} 
\newcommand{\hatchingcolour}{red}

\begin{tikzpicture}[vertex/.style={circle,draw,fill=black,inner sep=1.5pt},/tikz/xscale=1.6, /tikz/yscale=1.6]
%
%

\foreach \i in {0,2,4,6}{
    \draw[densely dashed, fill=\blobcolour, opacity=.5] (\i,1) to [out=90,in=180] (\i+0.5, 2.25) to [out=0,in=90] (\i+1,1.5) to (\i,1);
    
    \draw[densely dashed, fill=\blobcolour, opacity=.5] (\i+2,1) to [out=90,in=0] (\i+1.5, 2.25) to [out=180,in=90] (\i+1,1.5) to (\i+2,1);
        
    \draw (\i,1) to (\i+2,1);
    \draw (\i,1) to (\i+1,1.5);
    \draw (\i+2,1) to (\i+1,1.5);
    }

%
%

\node[vertex] (u) at (2.75,0.2) {};
\node[vertex] (v) at (5.25,0.2) {};

\draw (u) to (v);

\draw (u) to (0,1);
\draw (u) to (2,1);
\draw (u) to (4,1);

\draw (v) to (4,1);
\draw (v) to (6,1);
\draw (v) to (8,1);

%
%

\draw[\hatchingcolour, line width=0.4mm] (4,1) to [out=90,in=180] (4.5, 2.25) to [out=0,in=90] (5,1.5) to [out=90,in=180] (5.5, 2.25) to [out=0,in=90] (6,1) to (4,1);

\path[pattern=north west lines, pattern color=\hatchingcolour, line width=0.4mm, opacity=0.5] (4,1) to [out=90,in=180] (4.5, 2.25) to [out=0,in=90] (5,1.5) to [out=90,in=180] (5.5, 2.25) to [out=0,in=90] (6,1) to (4,1);

%
%

\foreach \i in {0,2,8}{
    \node[vertex] at (\i,1) {};
    }

\node[circle,ultra thick,draw=\hatchingcolour,fill=black,inner sep=1.5pt] at (4,1) {};

\node[circle,ultra thick,draw=\hatchingcolour,fill=black,inner sep=1.5pt] at (6,1) {};

%
%

\foreach \i in {1,3,7}{
    \node[vertex] at (\i,1.5) {};
    }

\node[circle,ultra thick,draw=\hatchingcolour,fill=black, inner sep=1.5pt] at (5,1.5) {};

%
%

\node at (-0.7,1) {$L_1(D)$};

\node (dummy) at (8.7,1) {};

\node at (0.5,1.7) {$H_{d,n-1}$};

\node[text=\hatchingcolour] at (4.5,1.7) {$G'$};

\node at (2.75,-0.09) {$u$};

\node at (5.25,-0.09) {$v$};

\node at (4,0.765) {$u'$};

\node at (6,0.765) {$v'$};
\end{tikzpicture}
\caption{\cref{obs : G' hanging off} when $D=G_{d,n}$ and $L_0(D)=\{u,v\}$.}
\label{fig : G' hanging off}
\end{figure}

\begin{obs}\label{obs : H' rooted and opposite}
Let $d\geq 2$ and $n\geq 2$ be integers, let $D \in \{G_{d,n},H_{d,n}\}$, and let $pq\in E(D[L_1(D)])$. Let $w$ be the unique common neighbour of $p$ and $q$ in $L_2(D)$, and let $C$ be the component of $D-\{p,w\}$ that contains $q$. Then, as illustrated in \cref{fig : H' rooted and opposite}, $H':=D-V(C)$ is a copy of $H_{d,n-1}$ such that $L_0(H')=\{p\}$, $L_1(H')=N_D(p)\cap L_2(D)$, and $V(H') \cap L_0(D) =\emptyset$. We say that $H'$ is the copy of $H_{d,n-1}$ in $D$ rooted at $p$ and opposite $q$.
\end{obs}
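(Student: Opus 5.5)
The plan is to make the recursive structure of $G_{d,n}$ explicit and reduce the statement to the definition of $H_{d,n-1}$. For an edge $e=xy$ of $G_{d,n}[V_i]$, define the \emph{descendant subgraph} $\Gamma(e)$ as the subgraph consisting of $x$, $y$, and every vertex created (recursively) from $e$. That is, it contains the $(2d-1)$-vertex path $P_e$ added for $e$, then everything added for edges of $P_e$, and so on down to level $n$.

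First I would prove by induction on $n-i$ that $\Gamma(e)$ is a copy of $G_{d,n-i}$ with $L_0=\{x,y\}$ and with its $j$-th level contained in $V_{i+j}$. The induction works because the construction step treats each edge of $V_i$ independently, and the edges of $G[V_{i+1}]$ created from $e$ are exactly the edges of $P_e$. The same induction gives the separation fact: $\Gamma(e)$ meets the rest of $G_{d,n}$ only in $\{x,y\}$, and no edge joins $V(\Gamma(e))\setminus\{x,y\}$ to the outside. The reason is that every edge added at stage $i+1$ joins a new vertex either to another new vertex on the same path or to an endpoint of the edge that created it.

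Next apply this with $D=G_{d,n}$ and $e=pq\in E(D[L_1(D)])$, so $\Gamma:=\Gamma(pq)\cong G_{d,n-1}$ with $L_0(\Gamma)=\{p,q\}$. The unique common neighbour of $p$ and $q$ in $\Gamma$ is the middle vertex of $P_{pq}$, which is $w$. In $D$ it is also the unique common neighbour of $p,q$ in $L_2(D)$, since any other path at $p$ or at $q$ avoids the other endpoint. Now compute the component $C$ of $D-\{p,w\}$ containing $q$. All vertices outside $\Gamma$ lie in $C$: the rest of $D$ minus $p$ is connected through $q$'s side, via $L_0(D)$ and $L_1(D)\setminus\{p\}$ together with all other descendant subgraphs, which attach at vertices other than $p$ or also at a vertex of $L_1(D)-p$. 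Inside $\Gamma$, the separation fact says $C\cap V(\Gamma)$ equals the component of $\Gamma-\{p,w\}$ containing $q$. Hence $D-V(C)=\Gamma-V(C_\Gamma)$, which is exactly the definition of $H_{d,n-1}$ applied to $\Gamma$ with the roles $u:=q$, $v:=p$, $w:=w$. This gives $L_0(H')=\{p\}$, and $L_1(H')$ is the $d$ vertices of $P_{pq}$ adjacent to $p$, i.e.\ $N_D(p)\cap L_2(D)\cap V(H')$. Finally, $V(H')\cap L_0(D)=\emptyset$ because $p\in L_1(D)$ and all other vertices of $H'$ lie in levels $\geq 2$.

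For $D=H_{d,n}$ the same argument applies verbatim. Here $H_{d,n}$ is itself a union of $\{v\}$, the $d$ level-1 vertices on $v$'s side, and the descendant subgraphs of the edges among them, so each $\Gamma(pq)$ for $pq\in E(D[L_1(D)])$ is still entirely contained in $D$.

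The main obstacle is the bookkeeping about $L_1(H')$. Taken literally, $N_D(p)\cap L_2(D)$ could include vertices on the path of the \emph{other} $L_1$-edge at $p$, which is not what the statement intends. I would handle this by stating the identity inside $H'$ (equivalently, inside $\Gamma(pq)$). I would also verify carefully that the component $C$ swallows everything outside $\Gamma$, in particular the descendant subgraph of the other edge at $p$, which reaches $C$ through its other endpoint.
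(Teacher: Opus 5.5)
Your argument is correct and is the reasoning the paper leaves implicit; the paper states this as an observation following from the recursive definition, justified only by the figure. You isolate the copy $\Gamma(pq)$ of $G_{d,n-1}$ hanging off $pq$, show that $C$ contains everything outside $\Gamma(pq)$ and meets $\Gamma(pq)$ exactly in the component of $\Gamma(pq)-\{p,w\}$ containing $q$, and then read off the definition of $H_{d,n-1}$ with $u:=q$, $v:=p$. The connectivity of $\Gamma(e')-p$ that you flag is routine, since each vertex at level $j\geq 1$ of $\Gamma(e')$ lies on a path joined to both endpoints of its parent edge.

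Your caveat about $L_1(H')$ is also justified. When $p$ is an interior vertex of the path $L_1(D)$, the set $N_D(p)\cap L_2(D)$ has $2d$ vertices while $L_1(H')$ has only $d$. So the identity holds only in the form $L_1(H')=N_D(p)\cap L_2(D)\cap V(H')$. The inclusion $L_1(H')\subseteq N_D(p)\cap L_2(D)$ is all that Cases 1.ii and 2.ii of the later proof actually use.
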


\begin{figure}[H]
\centering

\newcommand{\blobcolour}{LightGray} 
\newcommand{\hatchingcolour}{red}

\begin{tikzpicture}[vertex/.style={circle,draw,fill=black,inner sep=1.5pt},/tikz/xscale=1.6, /tikz/yscale=1.6]
%
%

\foreach \i in {0,2,4,6}{
    \draw[densely dashed, fill=\blobcolour, opacity=.5] (\i,1) to [out=90,in=180] (\i+0.5, 2.25) to [out=0,in=90] (\i+1,1.5) to (\i,1);
    
    \draw[densely dashed, fill=\blobcolour, opacity=.5] (\i+2,1) to [out=90,in=0] (\i+1.5, 2.25) to [out=180,in=90] (\i+1,1.5) to (\i+2,1);
        
    \draw (\i,1) to (\i+2,1);
    \draw (\i,1) to (\i+1,1.5);
    \draw (\i+2,1) to (\i+1,1.5);
    }

%
%

\node[vertex] (u) at (2.75,0.2) {};
\node[vertex] (v) at (5.25,0.2) {};

\draw (u) to (v);

\draw (u) to (0,1);
\draw (u) to (2,1);
\draw (u) to (4,1);

\draw (v) to (4,1);
\draw (v) to (6,1);
\draw (v) to (8,1);

%
%

\draw[\hatchingcolour, line width=0.4mm] (2,1) to [out=90,in=180] (2.5, 2.25) to [out=0,in=90] (3,1.5) to (2,1);

\path[pattern=north west lines, pattern color=\hatchingcolour, line width=0.4mm, opacity=0.5] (2,1) to [out=90,in=180] (2.5, 2.25) to [out=0,in=90] (3,1.5) to (2,1);

%
%

\foreach \i in {0,4,6,8}{
    \node[vertex] at (\i,1) {};
    }

\node[circle,ultra thick,draw=\hatchingcolour,fill=black,inner sep=1.5pt] at (2,1) {};

%
%

\foreach \i in {1,5,7}{
    \node[vertex] at (\i,1.5) {};
    }

\node[circle,ultra thick,draw=\hatchingcolour,fill=black, inner sep=1.5pt] at (3,1.5) {};

%
%

\node at (-0.7,1) {$L_1(D)$};

\node (dummy) at (8.7,1) {};

\node[text=\hatchingcolour] at (2.5,1.7) {$H'$};

\node at (0.5,1.7) {$H_{d,n-1}$};

\node at (2.75,-0.09) {$u$};
\node at (5.25,-0.09) {$v$};

\node at (2,0.71) {$p$};
\node at (3,1.24) {$w$};
\node at (4,0.71) {$q$};
\end{tikzpicture}
\caption{\cref{obs : H' rooted and opposite} when $D=G_{d,n}$ and $L_0(D)=\{u,v\}$.}
\label{fig : H' rooted and opposite}
\end{figure}

\begin{lem}\label{lem : DTW big lemma}
For any integers $d\geq 2$ and $n\geq 1$:
\begin{enumerate}[label=(\arabic*)]
    \item\label{lem : DTW big lemma (1)} If $(B_x : x\in V(T))$ is a domino tree-decomposition of $H_{d,n}$ and there exists a node $b\in V(T)$ such that
    \[L_0(H_{d,n}) \cup L_1(H_{d,n}) \subseteq B_b,\]
    then $|B_b| \geq n$ or $(B_x : x\in V(T))$ has width at least $d^2-d$.
    \item\label{lem : DTW big lemma (2)} If $(B_x : x\in V(T))$ is a domino tree-decomposition of $G_{d,n}$ and  $L_0(G_{d,n})=\{u,v\}$ and there exists distinct nodes $a,b,c\in V(T)$ such that
    \[u\in B_a,\quad \{u,v\}\subseteq B_b,\quad  v\in B_c,\]
    then $|B_b| \geq n$ or $(B_x : x\in V(T))$ has width at least $d^2-d$. 
\end{enumerate}
\end{lem}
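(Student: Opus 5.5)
We prove \labelcref{lem : DTW big lemma (1)} and \labelcref{lem : DTW big lemma (2)} simultaneously by induction on $n$. Throughout, suppose the width of $(B_x : x\in V(T))$ is less than $d^2-d$; we show $|B_b|\geq n$. The idea is that each application of \cref{lem : H} or \cref{lem : G} either produces a large bag directly, or forces $B_b$ to contain a new vertex of $L_1$ together with the hypothesis of the lemma for a copy of $H_{d,n-1}$ or $G_{d,n-1}$ whose vertices are disjoint from $L_0$. Induction then gives $n-1$ vertices of $B_b$ inside that copy, and one more vertex of $L_0$ (which is disjoint from the copy) brings the total to $n$.

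\textbf{Base case $n=1$.} Statement (1) is trivial, since $L_0(H_{d,1})\neq\emptyset$ lies in $B_b$. Statement (2) is also trivial, since $u\in B_b$. So assume $n\geq 2$; the observations \cref{obs : G' hanging off,obs : H' rooted and opposite} then apply.

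\textbf{The case analysis.} For (2), apply \cref{lem : G}.
\begin{itemize}
\item In case \labelcref{lem : G (i)}, $B_b\supseteq\{u\}\cup N_G(u)$, which has size $d+2\geq n$? That is not automatic, so we handle it differently. Every vertex of $N_G(u)\cap L_1$, say $x_i$ with $i\leq d$, has all its neighbours in $L_2$ spread over at most two bags. It is cleaner to route this through case (ii), as follows. Suppose some $x_i\in B_b$ has spread 1. Then $B_b$ contains $x_i$ and its $L_2$-neighbours. Hence the copy $H'$ of $H_{d,n-1}$ rooted at $x_i$ (\cref{obs : H' rooted and opposite}) satisfies the hypothesis of (1) with node $b$. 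By induction $|B_b\cap V(H')|\geq n-1$, and $u\notin V(H')$ gives $|B_b|\geq n$.
\item Otherwise every $x_i\in N(u)$ has spread 2. Then $\{u\}\cup N(u)\subseteq B_b$, the path $x_1,\dots,x_d$ lies in $B_b$, and each $x_i$ lies in exactly one other bag. Now argue as in the proof of \cref{lem : H}. Either consecutive $x_j,x_{j+1}$ have distinct other bags, giving the configuration of (iii); or all of $x_1,\dots,x_d$ share one other bag $B_{a'}$. In the latter case, consider the copies of $H_{d,n-1}$ hanging on the pieces. Each $x_i$ has about $2d-1$ neighbours in $L_2$, and over all $i$ these $\Theta(d^2)$ vertices must appear in $B_b$ or $B_{a'}$ via the triangles (clique containment). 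Any $L_2$-vertex of spread 1 lies in one of these bags, so one of the two bags gets $\geq d^2-d$ vertices, giving width $\geq d^2-d$. The counting here is the delicate point, and I expect it to be the main obstacle: each $L_2$ triangle $\{x_i,x_{i+1},w\}$ forces $w$ into $B_b\cap B_{a'}$, since that is the only place where $x_i$ and $x_{i+1}$ meet. Consequently $\bigcup_i N(x_i)\cap L_2$ lies in $B_b\cup B_{a'}$, and this set has size at least $d(d-1)$. The conditions on spread are what ensure the vertices used actually land in these two bags.
\item In case \labelcref{lem : G (ii)}, $B_b$ contains $p$ and $N(p)\cap L_2$. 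Using the copy $H'$ rooted at $p$, statement (1) for $n-1$ gives $n-1$ vertices of $B_b$ in $H'$, and $u$ or $v$ gives one more.
\item In case \labelcref{lem : G (iii)}, the copy $G'$ of $G_{d,n-1}$ hanging off $u'v'$ (\cref{obs : G' hanging off}) satisfies the hypothesis of (2) with nodes $a',b,c'$. Induction gives $n-1$ vertices in $V(G')\cap B_b$, and $u\notin V(G')$ adds one.
\end{itemize}

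For (1), apply \cref{lem : H} in the same way. Cases (ii) and (iii) are handled exactly as above, using $L_0(H)\subseteq B_b$, which is disjoint from the copy, for the extra vertex. In case (i), $L_1(H)\subseteq B_b\cap B_{a'}$. By the triangle argument, every $L_2$-vertex adjacent to two consecutive $x_i$ must lie in $B_b\cup B_{a'}$. Together with the spread-1 alternative, which reduces to case (ii), this forces a bag of size at least $d^2-d$ in total, as in the previous paragraph.
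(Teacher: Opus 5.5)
Your induction skeleton matches the paper's. Cases (ii) and (iii) of \cref{lem : H} and \cref{lem : G} are handled exactly as there: you pass to the copy of $H_{d,n-1}$ rooted at $p$, or to the copy of $G_{d,n-1}$ hanging off $u'v'$, and add a vertex of $L_0$ lying outside the copy. In case (i) of \cref{lem : G} you split three ways: some $x_i\in N(u)\cap L_1$ has spread 1; consecutive $x_j,x_{j+1}$ have different second bags; or all of $x_1,\dots,x_d$ share one second bag $B_{a'}$. This is a legitimate inline re-run of \cref{lem : H} on the copy of $H_{d,n}$ with root $u$. The paper instead observes that this copy satisfies the hypothesis of (1) for the same $n$, and invokes (1), which it proves first within the inductive step.

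The gap is in the only step that produces the bound $d^2-d$, namely the situation where $L_1(H)$ (or $x_1,\dots,x_d$) lies in $B_b\cap B_{a'}$. Your triangle argument does not deliver what you claim:
\begin{itemize}
\item The clique $\{x_i,x_{i+1},w\}$ lies in some bag containing $x_i$. This gives $w\in B_b\cup B_{a'}$, not $w\in B_b\cap B_{a'}$.
\item It applies only to the $d-1$ common neighbours $w$ of consecutive pairs, not to all of $L_2$.
\item The ``spread-1 alternative reducing to case (ii)'' does not exist here, since every $x_i$ already lies in two bags.
\item The count is too weak. Knowing that $d(d-1)$ vertices of $L_2$ lie in $B_b\cup B_{a'}$ only forces a bag of size about $d^2/2$. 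Moreover, a bag of size $d^2-d$ gives width only $d^2-d-1$.
\end{itemize}

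The bound is tight, so you must capture every vertex. Since the decomposition is domino, each $x\in L_1(H)$ lies in exactly the bags $B_b$ and $B_{a'}$, so the edge-property gives $N_H[x]\subseteq B_b\cup B_{a'}$. Every vertex of $L_2(H)$ lies on the path attached to some edge $x_jx_{j+1}$ of $L_1(H)$ and is adjacent to $x_j$ or $x_{j+1}$. Also $L_0(H)$ is adjacent to $L_1(H)$. Hence $L_0(H)\cup L_1(H)\cup L_2(H)\subseteq B_b\cup B_{a'}$. This set has exactly $1+d+(d-1)(2d-1)=2(d^2-d+1)$ vertices, so one of the two bags has at least $d^2-d+1$ vertices, and the width is at least $d^2-d$. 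The same count closes your last subcase of (2)(i), using $u$, the vertices $x_1,\dots,x_d$, and the paths attached to $x_1x_2,\dots,x_{d-1}x_d$.
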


\begin{proof}
Proceed by induction on $n\geq 1$. Both \labelcref{lem : DTW big lemma (1),lem : DTW big lemma (2)} hold in the base case since $|B_b|\geq 1$. Now suppose that $n\geq 2$ and the hypothesis holds for smaller values of $n$. We first prove \labelcref{lem : DTW big lemma (1)}. Let $H:=H_{d,n}$ and proceed by cases depending on the outcomes of \cref{lem : H}.

\textbf{Case 1.i} $L_1(H)$ lies in the intersection of two bags: Then $(B_x : x\in V(T))$ being domino and the edge-property together imply that $L_1(H)\cup N_H(L_1(H))$ is covered by the union of two bags, so $L_0(H)\cup L_1(H)\cup L_2(H)$ is covered by the union of two bags. Then $(B_x : x\in V(T))$ has width at least
\[\tfrac{1}{2}|L_0(H)\cup L_1(H)\cup L_2(H)|-1 = \tfrac{1}{2}(1+d+(d-1)(2d-1))-1=d^2-d,\]
as desired.

\textbf{Case 1.ii} There exists $p\in L_1(H)$ such that $\{p\} \cup (N_H(p) \cap L_2(H)) \subseteq B_b$: Choose $q\in  L_1(H)$ such that $pq\in E(H[L_1(H)])$. As per \cref{obs : H' rooted and opposite}, let $H'$ be the copy of $H_{d,n-1}$ in $H$ rooted at $p$ and opposite $q$. Then $L_0(H') = \{p\}$, $L_1(H')= N_H(p)\cap L_2(H)$, and $V(H')\cap L_0(H) =\emptyset$. Since $(B_x\cap V(H') : x\in V(T))$ is a domino tree-decomposition of $H'$ and $L_0(H')\cup  L_1(H') = \{p\} \cup (N_H(p) \cap L_2(H)) \subseteq B_b\cap V(H')$, by induction $|B_b\cap V(H')| \geq n-1$ or $(B_x\cap V(H') : x\in V(T))$ has width at least $d^2-d$. In the former case, since $V(H')\cap L_0(H)=\emptyset$ and $L_0(H)\subseteq B_b$, $|B_b| \geq n$. In the latter case $(B_x : x\in V(T))$ has width at least $d^2-d$.

\textbf{Case 1.iii} There exists $u'v'\in E(H[L_1(H)])$ and distinct nodes $a',c'\in V(T)\setminus \{b\}$ such that $u'\in B_{a'}$, $\{u',v'\}\subseteq B_b$, and $v'\in B_{c'}$: As per \cref{obs : G' hanging off}, let $G'$ be the copy of $G_{d,n-1}$ in $H$ hanging off $u'v'$. Then $L_0(G')=\{u',v'\}$ and $V(G')\cap L_0(H)=\emptyset$. Since $(B_x\cap V(G') : x\in V(T))$ is a domino tree-decomposition of $G'$ such that $u'\in B_{a'}\cap V(G')$ and $\{u',v'\}\subseteq B_b\cap V(G')$ and $v'\in B_{c'}\cap V(G')$, by induction $|B_b\cap V(G')| \geq n-1$ or $(B_x\cap V(G') : x\in V(T))$ has width at least $d^2-d$. In the former case, since $V(G')\cap L_0(H)=\emptyset$ and $L_0(H)\subseteq B_b$, $|B_b| \geq n$. In the latter case $(B_x : x\in V(T))$ has width at least $d^2-d$.

Next we prove \labelcref{lem : DTW big lemma (2)}. Let $G:=G_{d,n}$ and proceed by cases depending on the outcomes of \cref{lem : G}.

\textbf{Case 2.i} $\{u\}\cup N_G(u) \subseteq B_b$ or $\{v\}\cup N_G(v) \subseteq B_b$: Without loss of generality assume the latter. By definition of $G$, there is a unique common neighbour of $u$ and $v$ in $G$, call it $w$, and let $C$ be the component of $G-\{v,w\}$ that contains $u$. Observe that $H:=G-V(C)$ is a copy of $H_{d,n}$ such that $L_0(H)=\{v\}$. Now since $(B_x\cap V(H) : x\in V(T))$ is a domino tree-decomposition of $H$ such that $L_0(H)\cup L_1(H) = \{v\} \cup (N_G(v)\cap V(H)) \subseteq B_b\cap V(H)$, cases 1.i--iii imply that $|B_b\cap V(H)| \geq n$ or $(B_x\cap V(H) : x\in V(T))$ has width at least $d^2-d$. Hence $|B_b| \geq n$ or $(B_x : x\in V(T))$ has width at least $d^2-d$.

\textbf{Case 2.ii} There exists $p\in L_1(G)$ such that $\{p\} \cup (N_G(p) \cap  L_2(G)) \subseteq B_b$: Choose $q\in  L_1(G)$ such that $pq\in E(G[L_1(G)])$. As per \cref{obs : H' rooted and opposite}, let $H'$ be the copy of $H_{d,n-1}$ in $G$ rooted at $p$ and opposite $q$. Then $L_0(H') = \{p\}$, $L_1(H')= N_G(p)\cap L_2(G)$, and $V(H')\cap L_0(G) =\emptyset$. Since $(B_x\cap V(H') : x\in V(T))$ is a domino tree-decomposition of $H'$ and $L_0(H')\cup  L_1(H') = \{p\} \cup (N_G(p) \cap L_2(G)) \subseteq B_b\cap V(H')$, by induction $|B_b\cap V(H')| \geq n-1$ or $(B_x\cap V(H') : x\in V(T))$ has width at least $d^2-d$. In the former case, since $V(H')\cap L_0(G)=\emptyset$ and $L_0(G)\subseteq B_b$, $|B_b| \geq n$. In the latter case $(B_x : x\in V(T))$ has width at least $d^2-d$.

\textbf{Case 2.iii} There exists $u'v'\in E(G[L_1(G)])$ and distinct nodes $a',c'\in V(T)\setminus \{b\}$ such that $u'\in B_{a'}$, $\{u',v'\}\subseteq B_b$, and $v'\in B_{c'}$: As per \cref{obs : G' hanging off}, let $G'$ be the copy of $G_{d,n-1}$ in $G$ hanging off $u'v'$. Then $L_0(G')=\{u',v'\}$ and $V(G')\cap L_0(G)=\emptyset$. Since $(B_x\cap V(G') : x\in V(T))$ is a domino tree-decomposition of $G'$ such that $u'\in B_{a'}\cap V(G')$ and $\{u',v'\}\subseteq B_b\cap V(G')$ and $v'\in B_{c'}\cap V(G')$, by induction $|B_b\cap V(G')| \geq n-1$ or $(B_x\cap V(G') : x\in V(T))$ has width at least $d^2-d$. In the former case, since $V(G')\cap L_0(G)=\emptyset$ and $L_0(G)\subseteq B_b$, $|B_b| \geq n$. In the latter case $(B_x : x\in V(T))$ has width at least $d^2-d$.

This concludes the proof of the lemma.
\end{proof}

The next result is a precise version of \cref{DTWLowerBound} in the $k=2$ case.

\begin{thm}
\label{OuterplanarLowerBound}
For any integer $d\geqslant 2$, $H_{d,d^2-d+1}$ is an outerplanar graph with maximum degree $2d+4$ and domino treewidth at least $\frac{1}{2}(d^2-d-1)$.
\end{thm}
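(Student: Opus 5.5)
Set $n:=d^2-d+1$ and $H:=H_{d,n}$; there are three things to check: outerplanarity, the degree bound, and the domino treewidth bound.

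\emph{Outerplanarity.} Argue by induction on the recursive construction of $G_{d,i}$. Each step takes an edge $uv$ on the outer face and attaches a path $P$ whose first $d$ vertices are adjacent to $u$ and last $d$ to $v$. This is a fan-like triangulated strip glued along $uv$, so the new outer boundary replaces $uv$ by $u,P,v$. Every vertex stays on the outer face, and every new edge $xy$ inside $V_{i+1}$ becomes an outer edge to which later paths are attached. So $G_{d,n}$ is outerplanar, and hence so is its subgraph $H_{d,n}$.

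\emph{Degree bound.} A vertex $x\in V_i$ with $1\le i<n$ has the following neighbours:
\begin{itemize}
\item at most $2$ neighbours in $V_{i-1}$ (the middle path vertex is adjacent to both ends);
\item at most $2$ path neighbours in $V_i$;
\item for each of its at most $2$ incident edges in $G[V_i]$, at most $d$ neighbours in the attached path.
\end{itemize}
This gives at most $2d+4$. Vertices of $V_0$ and $V_n$ have fewer. I expect this bound is not tight, and I will not chase the exact value.

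\emph{Domino treewidth.} Let $(B_x:x\in V(T))$ be any domino tree-decomposition of $H$. I want to apply \cref{lem : DTW big lemma}\labelcref{lem : DTW big lemma (1)}, which needs a bag containing $L_0(H)\cup L_1(H)$. Here $L_0(H)=\{v\}$ and $L_1(H)=N_H(v)$ is a path on $d$ vertices.

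The vertex $v$ lies in at most two bags, say $B_a$ and $B_b$; the triangles $v x_i x_{i+1}$ are cliques, so each lies in $B_a$ or $B_b$. If all of $\{v\}\cup N_H(v)$ lies in one bag, \cref{lem : DTW big lemma}\labelcref{lem : DTW big lemma (1)} gives $|B_b|\ge n=d^2-d+1$ or width at least $d^2-d$. Either way the width is at least $d^2-d\ge \frac12(d^2-d-1)$.

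Otherwise $v$ lies in exactly two bags $B_a\ne B_b$ and $N_H(v)\subseteq B_a\cup B_b$. Along the path $x_1,\dots,x_d$ the triangles switch from one bag to the other at some $x_j\in B_a\cap B_b$. This is the main obstacle: I must rule out, or otherwise handle, a split of $v$'s neighbourhood between two bags.

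The factor $\frac12$ in the target suggests the following fix. Consider the modified decomposition obtained by replacing $B_a$ and $B_b$ (which are adjacent along the path between them, or else merged via contraction of the path between $a$ and $b$) with the single bag $B_a\cup B_b$. More carefully, contract the $a$–$b$ path in $T$, setting the merged bag to be the union of all bags on that path. Since $v$ lies in only $a$ and $b$, $a$ and $b$ must be adjacent in $T$, so the path is the single edge $ab$. Contracting $ab$ gives a tree-decomposition in which the merged bag has size at most $2\max|B_x|$. It remains domino except that vertices lying in both $B_a$ and $B_b$ now have spread $1$, which is only better. Vertices in $B_a$ and in some other bag keep spread at most $2$, because the merged node replaces both $a$ and $b$. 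So the result is still a domino decomposition, of width at most $2(w+1)-1$, where $w$ is the original width.

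Applying \cref{lem : DTW big lemma}\labelcref{lem : DTW big lemma (1)} to the merged decomposition gives $2(w+1)-1\ge d^2-d$, or the merged bag has size at least $d^2-d+1$, so $2(w+1)\ge d^2-d+1$. In both cases $w\ge \frac12(d^2-d-1)$, as required.
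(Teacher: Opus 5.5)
\textbf{Gap in the degree part.} Your outerplanarity argument is fine. Your domino-treewidth argument is correct and is essentially the paper's own. The two bags containing $v$ are adjacent, so contracting that edge of $T$ keeps the decomposition domino and at most doubles bag sizes. The merged bag contains $\{v\}\cup N_H(v)=L_0(H)\cup L_1(H)$, and part (1) of \cref{lem : DTW big lemma} with $n=d^2-d+1$ then gives $2(w+1)\geq d^2-d+1$. The only difference is that the paper merges uniformly, without your case split.

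The statement asserts that $\Delta(H)$ \emph{equals} $2d+4$. You only prove $\Delta(H)\leq 2d+4$, and your remark that you expect this bound is not tight is wrong: the bound is attained. Take any edge $u'v'$ of the path $H[L_1(H)]$, which exists since this path has $d\geq 2$ vertices. Let $w$ be the middle ($d$-th) vertex of the $(2d-1)$-vertex path in $L_2(H)$ attached to $u'v'$. Then:
\begin{itemize}
\item $w$ is adjacent to both $u'$ and $v'$;
\item $w$ is an interior vertex of its path, since $2d-1\geq 3$, so it has two neighbours in $L_2(H)$;
\item since $n=d^2-d+1\geq 3$, each of the two path edges at $w$ has its own $(2d-1)$-vertex path attached in $L_3(H)$, and each contributes exactly $d$ neighbours of $w$.
\end{itemize}
So $\deg_H(w)=2+2+2d=2d+4$. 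Every term in your upper-bound count is attained at once, and this is exactly how the paper shows equality.

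Add this and the proof is complete. For the asymptotic lower bound only $\Delta(H)\leq 2d+4$ matters. The exact value, however, is what the statement claims, and it is also what the subsequent precise theorem uses to state $\Delta(G)=2(d+k+1)$.
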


\begin{proof}
Let $H:=H_{d,d^2-d+1}$ with $L_0(H)=\{v\}$. As illustrated in \cref{OuterplanarConstruction}, 
$G_{d,d^2-d+1}$ is outerplanar and thus $H\subseteq G_{d,d^2-d+1}$ implies $H$ is outerplanar. The following observations imply $\Delta(H)=2d+4$: The vertex in $L_0(H)$ has degree $d$, the vertices in $L_{d^2-d+1}(H)$ have degree at most $4$, and for all $i\in [d^2-d]$ the vertices in $L_i(H)$ have degree at most $2d+4$. Moreover, for any edge $u'v'\in H[L_1(H)]$, the unique vertex in $N_H(u')\cap N_H(v')\cap L_2(H)$ has degree $2d+4$. Hence $\Delta(H)=2d+4$. 

Next, it is shown that $\dtw(H) \geq \frac{1}{2}(d^2-d-1)$. Let $\TT=(B_x:x\in V(T))$ be a domino tree-decomposition of $H$ with minimum width. Update $\mathcal{T}$ by merging the two bags that contain $v$ to a new node, call it $b$. Let $k$ denote the new width of $\mathcal{T}$. Then $\mathcal{T}$ is a domino tree-decomposition of $H$ and $2(\dtw(H)+1) \geq k+1$. Since $v$ only lies in the bag $B_b$, $L_0(H)\cup L_1(H) = \{v\}\cup N_H(v) \subseteq B_b$. Then \labelcref{lem : DTW big lemma (1)} in \cref{lem : DTW big lemma} implies that $k \geq d^2-d$, so $\dtw(H) \geq \frac{1}{2}(d^2-d-1)$, as desired.
\end{proof}

The next lemma determines the domino treewidth of the product of a graph and a complete graph, up to a constant factor. The \defn{cartesian product} of graphs $G_{1}$ and $G_{2}$, denoted by \defn{$G_{1}\square G_{2}$}, is the graph with vertex set $V(G_{1})\times V(G_{2})$ and all the edges of the form $(x_{1},y_{1})(x_{2},y_{2})$, where either $x_{1}=x_{2}$ and $y_{1}y_{2}\in E(G_{2})$, or $x_{1}x_{2}\in E(G_{1})$ and $y_{1}=y_{2}$. 
The \defn{strong product} of $G_{1}$ and $G_{2}$, denoted by \defn{$G_{1}\boxtimes G_{2}$}, is the graph with vertex set $V(G_{1})\times V(G_{2})$ and all the edges of the form $(x_{1},y_{1})(x_{2},y_{2})$, where either $x_{1}=x_{2}$ and $y_{1}y_{2}\in E(G_{2})$, or $x_{1}x_{2}\in E(G_{1})$ and $y_{1}=y_{2}$, or $x_{1}x_{2}\in E(G_{1})$ and $y_{1}y_{2}\in E(G_{2})$.

\begin{lem}
\label{DTWProductCompleteGraph}
For any graph $H$ and any integer $k\geq 1$, 
\begin{enumerate}[label=(\roman*)]
    \item\label{DTWProductCompleteGraph (i)} $k( \dtw(H)+1 ) \leq \dtw(H\square K_{2k-1})+1$, and
    \item\label{DTWProductCompleteGraph (ii)} $\dtw(H\square K_{k})+1 \leq \dtw(H \boxtimes K_{k})+1 \leq k (\dtw(H)+1)$.
\end{enumerate}
\end{lem}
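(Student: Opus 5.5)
The upper bound (ii) has two parts. The first inequality is monotonicity: $H\square K_k$ is a spanning subgraph of $H\boxtimes K_k$, and a domino tree-decomposition of a graph restricts to one of any subgraph.

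For the second inequality I will blow up an optimal decomposition. Take a domino tree-decomposition $(B_x:x\in V(T))$ of $H$ of width $\dtw(H)$. Define $B'_x:=B_x\times V(K_k)$. Each vertex $(v,i)$ lies in exactly those $B'_x$ with $v\in B_x$. These nodes form a subtree of at most two nodes, so the new decomposition is domino. Now take an edge $(v,i)(w,j)$ of $H\boxtimes K_k$. Then $v=w$ or $vw\in E(H)$, so some $B_x$ contains $v$ and $w$, and hence $B'_x$ contains both endpoints. Finally $|B'_x|=k|B_x|\leq k(\dtw(H)+1)$, which gives (ii).

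For the lower bound (i), fix a domino tree-decomposition $(B_x:x\in V(T))$ of $G:=H\square K_{2k-1}$ of width $w$. The goal is to extract a domino tree-decomposition of $H$ of width at most $(w+1)/k-1$.

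For each $v\in V(H)$, the fibre $\{v\}\times V(K_{2k-1})$ is a clique, so it lies in some bag. I claim the nodes whose bags meet the fibre span at most two nodes that each contain at least $k$ fibre vertices. Here is the plan for this step. Each fibre vertex lies in at most two bags, say $B_x$ and possibly $B_y$ with $xy\in E(T)$. Since the whole fibre lies in one bag $B_z$, every fibre vertex lies in $B_z$ and in at most one other bag. So the fibre is concentrated at $z$.

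The key idea is to choose, for each $v$, a node $\phi(v)$ such that $B_{\phi(v)}$ contains at least $k$ vertices of the fibre of $v$ (for instance $\phi(v)=z$). I then define $C_x:=\{v:|B_x\cap(\{v\}\times V(K_{2k-1}))|\geq k\}$, which gives $|C_x|\leq |B_x|/k$.

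I need $(C_x)$ to be a domino tree-decomposition of $H$, and this is the main obstacle. There are two things to check.

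First, the vertex-property and spread. If two nodes each contain at least $k$ of the $2k-1$ fibre vertices, they share a fibre vertex by pigeonhole. Since that vertex has spread at most 2, the two nodes are equal or adjacent. Three distinct such nodes would force a vertex of spread 3 unless they pairwise intersect in a pattern impossible in a tree. So each $v$ lies in at most two $C_x$, and these are adjacent, giving spread at most 2 and connectedness.

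Second, the edge-property. For an edge $vw$ of $H$, the matching edges $(v,i)(w,i)$ for all $i$ must each be covered by some bag. Since $(v,i)$ lies only in the one or two bags of $v$'s fibre region, and likewise for $w$, I will count: among the $2k-1$ indices $i$, each is covered at one of at most four relevant nodes (the rich nodes of $v$ and of $w$, which are at distance at most 1 from them). Pigeonhole then yields a node $x$ covering at least $k$ indices for both $v$ and $w$ simultaneously. If this pigeonhole fails, I would instead add $v$ to an adjacent bag, checking that this keeps the spread at most 2.

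Hence $\dtw(H)+1\leq (w+1)/k$, which is (i).
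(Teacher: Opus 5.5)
\textbf{There is a genuine gap: the edge-property in part (i) is not established.} Your part (ii) is the paper's blow-up argument. In part (i), your sets $C_x$ are exactly the paper's $B'_t$, and your spread argument matches the paper's: two rich nodes share a fibre vertex, so they are adjacent, and a tree has no triangle. You flag the edge-property as the main obstacle, but the argument you give does not resolve it, for three reasons.
\begin{itemize}
\item Pigeonhole over four candidate nodes with $2k-1$ indices only guarantees about $(2k-1)/4$ indices at one node, not the $k$ you need.
\item The four-node localisation is not justified. The fibre of $v$ lies in one bag $B_z$, but different vertices $(v,i)$ can have their second bags at different neighbours of $z$. So the fibre as a whole can meet up to $2k$ nodes, not ``one or two bags''.
\item The fallback of adding $v$ to an adjacent bag is not an argument. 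It breaks $|C_x|\le |B_x|/k$, which is the only source of your width bound, and you do not show the spread stays at most $2$.
\end{itemize}

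What you need is a single node whose bag contains at least $k$ vertices of both fibres. The paper gets this without using the domino property.
\begin{itemize}
\item Take a path $x_1\dots x_n$ in $T$ with $C_u\subseteq B_{x_1}$ and $C_v\subseteq B_{x_n}$, and let $m$ be maximum with $|C_u\cap B_{x_m}|\ge k$.
\item If $m<n$, then at least $k$ vertices $a\in C_u$ avoid $B_{x_{m+1}}$. Their subtrees therefore lie on the $x_m$-side of the edge $x_mx_{m+1}$.
\item The matching partner $b\in C_v$ of each such $a$ has a subtree that contains $x_n$ and must meet the subtree of $a$. This forces $b\in B_{x_m}$, so $x_m$ is rich for both $u$ and $v$.
\end{itemize}

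Alternatively, you can sharpen your own count. Suppose $C_v\subseteq B_{z_v}$ and $C_w\subseteq B_{z_w}$ with $z_v\neq z_w$.
\begin{itemize}
\item Since each fibre vertex has spread at most $2$, every node containing $(v,i)$ is $z_v$ or a neighbour of $z_v$, and similarly for $w$. So each matching edge $(v,i)(w,i)$ is covered at a node within distance $1$ of both $z_v$ and $z_w$.
\item Because $T$ has no triangles, if $z_v$ and $z_w$ are at distance $2$ this node must be the unique middle node. Its bag then contains both fibres entirely.
\item If $z_v$ and $z_w$ are adjacent, the covering node lies in $\{z_v,z_w\}$. Pigeonhole over these \emph{two} nodes gives one node covering at least $k$ indices, and that node also contains its own whole fibre, so it is rich for both $v$ and $w$.
\end{itemize}
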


\begin{proof}[Proof of \cref{DTWProductCompleteGraph}\labelcref{DTWProductCompleteGraph (i)}]
Let $G:=H\square K_{2k-1}$ where $V(G) = V(H)\times V(K_{2k-1})$. For each $v\in V(H)$, let $C_v:=\{v\}\times V(K_{2k-1})$. Then $C_v$ is a $(2k-1)$-clique in $G$, and for each edge $uv\in E(H)$, there is a perfect matching between $C_{u}$ and $C_{v}$. Let $\mathcal{T}:=(B_t : t\in V(T))$ be a domino tree-decomposition of $G$ with width $\dtw(G)$. For each $t\in V(T)$, let $B_t':=\{v\in V(H) : |C_v\cap B_t|\geq k\}$. Let $\mathcal{T}':=(B_t' : t\in V(T))$.

We show that $\mathcal{T}'$ is a domino tree-decomposition of $H$. Consider any $v\in V(H)$. Since $C_v$ is a clique in $G$, there exists a node $x\in V(T)$ such that $C_v\subseteq B_x$, so $v\in B_x'$. Now suppose there exists distinct nodes $x,y\in V(T)$ such that $v\in B_x'\cap B_y'$. Then $|C_v\cap B_x|\geq k$ and $|C_v\cap B_y|\geq k$, which along with $|C_v| = 2k-1$ implies that $B_x\cap B_y\not=\emptyset$. Since $\mathcal{T}$ is domino and $B_x\cap B_y\not=\emptyset$, $xy\in E(T)$. This shows that $\{t\in V(T) : v\in B_t'\}$ is a non-empty clique in $T$, implying $v$ is in at most two bags of $\mathcal{T}'$. It remains to show that $\mathcal{T}'$ has the edge-property. Consider any $uv\in E(H)$. Let $P=(x_1, \dots, x_n)$ be a path in $T$ such that $C_u\subseteq B_{x_1}$ and $C_v\subseteq B_{x_n}$. Then 
$|C_{u}\cap B_{x_{1}}|\geq k$. Let $m\in[n]$ be maximum such that $|C_{u}\cap B_{x_{m}}|\geq k$. Then $u\in B_{x_m}'$. If $m=n$, then $\{u,v\}\subseteq B_{x_{n}}'$. If $m<n$, then by the maximality of $m$, $|C_{u}\cap B_{x_{m+1}}|\leq k-1$. Let $S:=C_{u}\setminus B_{x_{m+1}}$. Then $|S|\geq k$. Let $M$ be the perfect matching between $C_u$ and $C_v$, and consider each $ab\in M$ with $a\in S$. Then $a\in B_{x_1}\setminus B_{x_{m+1}}$ and $b\in B_{x_n}$. If $b\not\in B_{x_m}$, then the vertex-property of $\mathcal{T}$ implies that the subtrees of $T$ associated with $a$ and $b$ are disjoint, contradicting the edge-property of $\mathcal{T}$. Thus $b\in B_{x_m}$.
Therefore, $\{b: ab\in M, a\in S\}\subseteq B_{x_m}$ and $|C_v\cap B_{x_m}|\geq |S|\geq k$ 
, implying $\{u,v\}\subseteq B_{x_m}'$. This completes the proof that $\mathcal{T}'$ is a domino tree-decomposition of $H$. Since $(C_v:v\in B_t')$ are pairwise disjoint, 
\[k(\dtw(H)+1)\leq k\max_{t\in V(T)}|B_t'| \leq \max_{t\in V(T)}\sum_{v\in B_t'}|C_v\cap B_t| \leq \max_{t\in V(T)}|B_t|\leq\dtw(G)+1.\qedhere\]
\end{proof}

\begin{proof}[Proof of \cref{DTWProductCompleteGraph}\labelcref{DTWProductCompleteGraph (ii)}]
The first inequality holds since $H\square K_{k}\subseteq H\boxtimes K_{k}$. Let $G:=H\boxtimes K_{k}$. Then $G$ is obtained from $H$ by replacing each vertex $v\in V(H)$ with a clique $C_{v}$ consisting of $k$ new vertices, where each edge $vw$ of $H$ becomes a complete bipartite graph $K_{k,k}$ in $G$ between $C_v$ and $C_w$. Let $(B_{x}:x\in V(T))$ be a domino tree-decomposition of $H$ with width $\dtw(H)$. For each $x\in V(T)$, let $B_{x}':=\bigcup_{v\in B_{x}}C_{v}$. It is clear that $(B_{x}':x\in V(T))$ is a tree-decomposition of $G$. Furthermore, $|B_{x}'|=k|B_{x}|$ for each $x\in V(T)$. And for each $v\in V(H)$, every vertex in $C_{v}$ is in at most two bags of $(B_{x}':x\in V(T))$. Hence, $(B_{x}':x\in V(T))$ is domino with width $k(\dtw(H)+1)-1$.
\end{proof}

We now prove a precise version of \cref{DTWLowerBound}:
\begin{thm}
For all integers $k\geq 1$ and $d\geq 2$ there exists a graph $G$ with $\tw(G)\leq 6k-4$, $\Delta(G) = 2(d+k+1)$, and $\dtw(G)\geq \frac{1}{2}k(d^2-d+1)-1$.
\end{thm}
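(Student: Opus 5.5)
The plan is to take $G := H\square K_{2k-1}$, where $H := H_{d,d^2-d+1}$ is the outerplanar graph from \cref{OuterplanarLowerBound}, and then check treewidth, maximum degree and domino treewidth in turn.

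\emph{Domino treewidth.} By \cref{OuterplanarLowerBound}, $\dtw(H)\geq \frac12(d^2-d-1)$. \cref{DTWProductCompleteGraph}\labelcref{DTWProductCompleteGraph (i)} then gives
\[\dtw(G)+1\geq k(\dtw(H)+1)\geq \tfrac12 k(d^2-d+1),\]
which is exactly the claimed bound $\dtw(G)\geq \frac12 k(d^2-d+1)-1$.

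\emph{Maximum degree.} In $H\square K_{2k-1}$ a vertex $(v,i)$ has degree $\deg_H(v)+(2k-2)$. Since $\Delta(H)=2d+4$, this gives $\Delta(G)=2d+4+2k-2=2(d+k+1)$, with equality attained at a vertex of maximum degree in $H$.

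\emph{Treewidth.} Outerplanar graphs have treewidth at most $2$, so $H$ has a tree-decomposition with bags of size at most $3$. Replacing each vertex $v$ in a bag by the clique $\{v\}\times V(K_{2k-1})$ yields a tree-decomposition of $H\boxtimes K_{2k-1}\supseteq G$ with bags of size at most $3(2k-1)$. Its width is $6k-4$, so $\tw(G)\leq 6k-4$.

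The only delicate point is the degree count: it depends on \cref{OuterplanarLowerBound} giving $\Delta(H)$ exactly, together with the observation that the Cartesian product adds exactly $2k-2$ to each degree. Everything else follows directly from the earlier results, so I expect no real obstacle.
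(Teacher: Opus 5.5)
Your proposal is correct and follows the paper's proof: it also takes $G=H\square K_{2k-1}$ with $H=H_{d,d^2-d+1}$ from \cref{OuterplanarLowerBound}. It then bounds $\tw(G)\leq(\tw(H)+1)(2k-1)-1=6k-4$, computes $\Delta(G)=\Delta(H)+2k-2=2(d+k+1)$, and applies \cref{DTWProductCompleteGraph}\labelcref{DTWProductCompleteGraph (i)} to get $\dtw(G)\geq k(\dtw(H)+1)-1\geq\frac{1}{2}k(d^2-d+1)-1$.
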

\begin{proof}
By \cref{OuterplanarLowerBound} there exists an outerplanar graph $H$ with $\Delta(H)=2d+4$ and $\dtw(H)\geq \frac{1}{2}(d^2-d-1)$. Let $G=H\square K_{2k-1}$. Then $\tw(G) \leq (\tw(H)+1)(2k-1)-1= 6k-4$ and $\Delta(G) = \Delta(H)+\Delta(K_{2k-1}) =2(d+k+1)$. Lastly, \cref{DTWProductCompleteGraph} implies that $\dtw(G) \geq k(\dtw(H)+1)-1 = \frac{1}{2}k(d^2-d+1)-1$, as desired.
\end{proof}

\section{Chordal Completions}
\label{ChordalCompletions}

For the proofs in \cref{BoundedSpread} it is helpful to consider chordal graphs, which we now introduce. A graph $G$ is \defn{chordal} if $G$ has no induced cycle of length at least $4$. A \defn{chordal completion} (also called \defn{triangulation}) of a graph $G$ is a chordal graph $G'$ such that $G$ is a spanning subgraph of $G'$. There is a large literature on chordal completions; see the survey by \citet{Heggernes06}. 
A graph is chordal if and only if it has a tree-decomposition in which each bag is a clique (see \citep[Proposition~12.3.6]{Diestel5}). Let \defn{$\omega(G)$} denote the number of vertices in a largest clique in a graph $G$. It follows from the Helly property that $\tw(Q)=\omega(Q)-1$ for every chordal graph $Q$. 

Say $(B_x:x\in V(T))$ is a tree-decomposition of a graph $G$, and $G'$ is obtained from $G$ by adding edges so that $B_x$ is a clique for each $x\in V(T)$. So $(B_x:x\in V(T))$ is a tree-decomposition of $G'$ with the same width, and $G'$ is chordal. It follows that the treewidth of a graph $G$ equals the minimum, taken over all chordal completions $G'$ of $G$, of $\omega(G')-1$. This approach connects chordal completions with small maximum degree to tree-decompositions with small spread. 

\begin{obs}
\label{MakeChordalCompletion}
Given a tree-decomposition of a graph $G$ with width $k$ in which each vertex has spread at most $s$, adding an edge between any two non-adjacent vertices in a common bag, gives a chordal completion of $G$ with treewidth $k$ and maximum degree at most $ks$. Conversely, if a graph $G$ has a chordal completion $G'$ with maximum clique size $k$ and maximum degree $\Delta$, then $G$ has a tree-decomposition with width $k-1$, where each vertex has spread at most $\Delta+1$.
\end{obs}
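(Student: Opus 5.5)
The observation has two directions, and I will prove each separately by direct verification from the definitions.

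\emph{Forward direction.} Let $(B_x:x\in V(T))$ be a tree-decomposition of $G$ with width $k$ and spread at most $s$. Let $G'$ be obtained by making every bag a clique. Then $(B_x:x\in V(T))$ is still a tree-decomposition of $G'$, since the edge-property holds by construction and the vertex-property is unchanged. Because every bag is a clique, $G'$ is chordal by the characterisation cited above (\citep[Proposition~12.3.6]{Diestel5}). Its treewidth is at most $k$, and at least $\tw(G)$.

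For the degree bound, fix a vertex $v$. Every neighbour of $v$ in $G'$ lies in a common bag with $v$: for edges of $G$ this is the edge-property, and for added edges it holds by construction. Hence
\[ N_{G'}(v)\subseteq \bigcup_{x:\,v\in B_x} (B_x\setminus\{v\}). \]
There are at most $s$ such bags, each contributing at most $k$ vertices, so $\deg_{G'}(v)\leq ks$.

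A small caveat: the statement says ``treewidth $k$''. Strictly, the argument gives at most $k$, with equality when $\tw(G)=k$. I will phrase it as ``at most $k$'', or note that this is the intended reading.

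\emph{Converse direction.} Let $G'$ be a chordal completion with $\omega(G')=k$ and $\Delta(G')=\Delta$. The natural decomposition is the clique tree of $G'$: a tree-decomposition whose bags are exactly the maximal cliques of $G'$. Such a tree exists by the same chordal characterisation, together with the standard reduction that contracts tree edges $xy$ with $B_x\subseteq B_y$ until each bag is a distinct maximal clique. Each bag then has size at most $k$, so the width is $k-1$, and it is also a tree-decomposition of the spanning subgraph $G$.

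For the spread bound, each maximal clique containing $v$ is $\{v\}$ together with a subset of $N_{G'}(v)$. This alone would only give exponentially many cliques, so the key step is a sharper count: the number of maximal cliques of a chordal graph containing $v$ is at most $|N_{G'}(v)|+1$. I would prove this with a perfect elimination ordering of $G'[N_{G'}[v]]$, which is chordal as an induced subgraph of a chordal graph. A chordal graph on $m$ vertices has at most $m$ maximal cliques, because each maximal clique equals $\{w\}\cup(\text{later neighbours of }w)$ for its first vertex $w$ in the ordering. The maximal cliques of $G'$ containing $v$ are precisely the maximal cliques of $G'[N_{G'}[v]]$ (every maximal clique there contains $v$, since $v$ is adjacent to all the others). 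Hence their number is at most $\deg_{G'}(v)+1\leq\Delta+1$.

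The main obstacle is this clique count. A perfect elimination ordering is not introduced in the paper, so I would cite it as standard (e.g.\ via Diestel, or the survey of Heggernes). Alternatively, I could argue directly on the clique tree that the subtree of bags containing $v$ has at most $\deg(v)+1$ nodes: distinct maximal cliques must differ, so each edge of that subtree separates some neighbour of $v$, and injectivity of this map follows from the vertex-property. I would try the elimination-ordering argument first.
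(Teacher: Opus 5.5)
Your proof is correct. The forward direction matches the paper's argument in the paragraph before the observation: making every bag a clique keeps $(B_x:x\in V(T))$ a tree-decomposition of $G'$ with clique bags, so $G'$ is chordal. Each $G'$-neighbour of $v$ then shares one of the at most $s$ bags containing $v$, and each such bag contributes at most $k$ other vertices. Your remark that ``treewidth $k$'' should be read as ``at most $k$'' is fair.

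The paper gives no written proof of the converse, so your main route and the paper's implicit one differ. The natural argument uses the reduction in the proof of \cref{Factor2}: clique bags, with no bag a subset of a neighbouring bag. It is the injection you mention as a fallback.
\begin{itemize}
\item Root the subtree $T_v$ of nodes whose bags contain $v$.
\item Send each non-root node $x$ to a vertex of $B_x$ missing from its parent's bag. This vertex is a neighbour of $v$ because $B_x$ is a clique.
\item The vertex-property makes this map injective, so $v$ lies in at most $\deg_{G'}(v)+1$ bags.
\end{itemize}
Your main route instead counts the maximal cliques of $G'[N_{G'}[v]]$ using a perfect elimination ordering. Both arguments work, with these trade-offs:
\begin{itemize}
\item The injection needs only that no bag lies in a neighbouring bag, and it is self-contained.
\item Your count needs the slightly stronger fact that after the reduction the bags are exactly the maximal cliques of $G'$, each appearing once. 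This holds: if $B_x\subseteq B_y$ with $y\neq x$, then $B_x$ lies in the bag of the neighbour of $x$ on the $x$--$y$ path.
\item Your count also imports the existence of perfect elimination orderings, which is equivalent to the simplicial orientations the paper cites from Rose in \cref{BoundedSpread}.
\item In exchange, your count is a decomposition-free fact about chordal graphs.
\end{itemize}
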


For a graph $G$ with given treewidth and maximum degree, to prove results about $G$ using a chordal completion $G'$ of $G$, it is necessary that $G'$ has small treewidth (that is, small maximum clique size) and small maximum degree. The following question of \citet{Antony20} naturally arises:

\begin{ques}[\protect\citep{Antony20}] 
\label{AntonyQuestion}
Does every graph with treewidth $k$ and maximum degree $\Delta$ have a chordal completion with treewidth at most $f(k,\Delta)$ and maximum degree at most $g(k,\Delta)$ for some functions $f,g$?
\end{ques}

In the case $k=1$, since every forest is chordal, the answer to \cref{AntonyQuestion} is trivially `yes' with $f(1,\Delta)=1$ and $g(1,\Delta)=\Delta$.

Next consider outerplanar graphs, which have treewidth at most 2. 

For any cycle $C=(x_{0},\ldots,x_{t})$ with $t\geq2$, a \defn{zig-zag triangulation} of $C$ is the graph $G$ obtained from $C$ by adding the edges $\{x_{i}x_{j}:i+j\in\{t,t+1\}\}$ as illustrated in \cref{fig: zig zag}. Then $G$ is a chordal completion of $C$ such that $\deg_{G}(x_{0})=2$, $\deg_{G}(x_{t})\leq3$ and $\deg_{G}(x_{i})\leq4$ for each $i\in[t-1]$.


\begin{figure}[ht]
    \centering
    \begin{tikzpicture}[/tikz/xscale=1.2, /tikz/yscale=1.2]

        \begin{scope}[every node/.style={draw, shape = circle, fill = black, minimum size = 0.1cm, inner sep=1.2pt}, rotate=360/16, transform shape]
            \node (v0) at (-1.2,-1.2) {};
            \node (v1) at (-1.7,0) {};
            \node (v2) at (-1.2,1.2) {};
            \node (v3) at (0,1.7) {};
            \node (v4) at (1.2,1.2) {};
            \node (v5) at (1.7,0) {};
            \node (v6) at (1.2,-1.2) {};
            \node (v7) at (0,-1.7) {};
        \end{scope}

        \begin{scope}[every edge/.style={draw, thick}]
            \path [-] (v0) edge (v1);
            \path [-] (v1) edge (v2);
            \path [-] (v2) edge (v3);
            \path [-] (v3) edge (v4);
            \path [-] (v4) edge (v5);
            \path [-] (v5) edge (v6);
            \path [-] (v6) edge (v7);
            \path [-] (v7) edge (v0);
        \end{scope}

        \begin{scope}[every edge/.style={draw, very thick, LightGray}]
            \path [-] (v7) edge (v1);
            \path [-] (v1) edge (v6);
            \path [-] (v6) edge (v2);
            \path [-] (v2) edge (v5);
            \path [-] (v5) edge (v3);
        \end{scope}
        
        \begin{scope}[scale=1.25, rotate=360/16]
            \node (x0) at (-1.2,-1.2) {$x_{0}$};
            \node (x1) at (-1.7,0) {$x_{1}$};
            \node (x2) at (-1.2,1.2) {$x_{2}$};
            \node (x3) at (0,1.7) {$x_{3}$};
            \node (x4) at (1.2,1.2) {$x_{4}$};
            \node (x5) at (1.7,0) {$x_{5}$};
            \node (x6) at (1.2,-1.2) {$x_{6}$};
            \node (x7) at (0,-1.7) {$x_{7}$};
        \end{scope}
        
    \end{tikzpicture}
    \caption{Zig-zag triangulation of $(x_{0},\ldots,x_{7})$.}
    \label{fig: zig zag}
\end{figure}

\begin{lem}
\label{OuterplanarChordalCompletion}
Every outerplanar graph $G$ has an outerplanar chordal completion $G'$ such that $\deg_{G'}(v)\leq \max\{3\deg_G(v)-2,0\}$ for each vertex $v$. 
\end{lem}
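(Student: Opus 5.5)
We will reduce to the 2-connected case and then triangulate each inner face with a zig-zag triangulation whose endpoints are chosen well.

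\textbf{Reduction.} First reduce to $G$ being connected. Chordal completions of the components combine, and degrees are unaffected. Isolated vertices have degree $0$, which matches the bound $\max\{3\cdot 0-2,0\}=0$.

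Next reduce to the blocks of $G$. A graph is chordal if and only if each of its blocks is chordal, and outerplanarity is likewise determined blockwise. So it suffices to complete each block separately, taking care with cut vertices: the degree of a cut vertex $v$ is the sum of its degrees over the blocks containing it. This is why the bound has the affine form $3\deg_G(v)-2$. The bridges (blocks isomorphic to $K_2$) need no new edges.

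For a $2$-connected outerplanar block $B$ we aim for the stronger bound $\deg_{B'}(v)\le 3\deg_B(v)-3$. Bridges satisfy only $\deg=1\le 3\cdot 1-2$. Summing over the blocks at $v$ then gives
\[
\deg_{G'}(v)\le \textstyle\sum (3\deg_{B}(v)-3)+(\text{bridge terms}) \le 3\deg_G(v)-2,
\]
as long as at least one block at $v$ is a bridge or the sum is slack by $1$. To make this uniform, I would instead prove the slightly sharper blockwise claim: for each $2$-connected block $B$ and each vertex $v$,
\[
\deg_{B'}(v)\le 3\deg_B(v)-3,
\]
except possibly at one designated vertex. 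Alternatively, I would run an induction that handles the root block with the weaker bound $3\deg-2$ and all other blocks with $3\deg-3$ at their attachment vertex. The bookkeeping here is routine but must be set up carefully.

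\textbf{The $2$-connected case.} Now let $B$ be $2$-connected outerplanar. Its outer face is a Hamilton cycle, and its inner faces are bounded by cycles. The weak dual, with one node per inner face and adjacency across chords, is a tree. Each inner face $F$ is a cycle, and we triangulate it by a zig-zag triangulation starting at a chosen vertex $x_0$ and ending near $x_t$. In the zig-zag triangulation, $x_0$ gains no edges, $x_t$ gains at most $1$ edge, and every other vertex gains at most $2$ edges.

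Root the weak dual tree at some face. For each non-root face $F$, let $e=pq$ be the chord shared with the parent face. Choose the zig-zag of $F$ so that $x_0=p$ and $x_t=q$; then $x_0x_t=pq$ is exactly the shared chord.

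Now count the new edges at a vertex $v$ of degree $\deg_B(v)=m$. Vertex $v$ lies on exactly $m-1$ inner faces, since these sit between consecutive edges at $v$ in the planar embedding. Across these faces, $v$ gains at most $2$ edges per face, so at most $2(m-1)$ new edges in total, giving
\[
\deg_{B'}(v)\le m+2(m-1)=3m-2.
\]
To save one more, I would use the endpoint choices. Among the faces at $v$, whenever $v$ is an endpoint of the parent chord of some face containing it, that face adds at most $1$ edge at $v$ (or $0$ if $v=x_0$). I expect that for every vertex at least one of its faces has $v$ as an endpoint of the parent chord or of the root choice, yielding $3m-3$. The exception is a vertex whose faces all have parent chords not incident to $v$; this can happen for at most one vertex, namely one on the root face only, and we can pick the root face's $x_0$ to be such a vertex.

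\textbf{Main obstacle.} The crux is this endpoint-saving argument combined with the cut-vertex bookkeeping. If the saving fails, I would fall back on the plain bound $3m-2$ for blocks. I would then handle cut vertices by rooting the block-cut tree and, in each non-root block, choosing the root face and $x_0$ to be the cut vertex attaching that block to its parent. This saves $2$ at that vertex within the block, which is enough to keep the total at most $3\deg_G(v)-2$.

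Finally, outerplanarity of $B'$ is preserved, since all added edges are drawn inside faces without crossings. Chordality holds because we are gluing triangulated polygons along edges in a tree-like fashion.
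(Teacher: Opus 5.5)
Your fallback argument is correct and is essentially the paper's proof. In a $2$-connected block, a vertex of degree $m$ lies on exactly $m-1$ inner faces, and each zig-zag triangulation adds at most $2$ edges at it, giving $3m-2$. The paper does not even split into blocks. It zig-zag triangulates every internal face of an outerplanar embedding of $G$ and observes that a non-isolated vertex $v$ has at least one angle in the outer face. So $v$ lies on at most $\deg_G(v)-1$ internal faces, and $\deg_{G'}(v)\leq \deg_G(v)+2(\deg_G(v)-1)$. No rooting of the weak dual, no choice of $x_0,x_t$, and no block-cut tree is needed.

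The rest of your proposal should be dropped, for two reasons.

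\emph{The sharpening is unnecessary.} Suppose $v$ lies in blocks $B_1,\dots,B_j$ with $j\geq 1$ and $m_i=\deg_{B_i}(v)$. Then $\sum_i(3m_i-2)=3\deg_G(v)-2j\leq 3\deg_G(v)-2$. The negative constant in the affine bound is exactly what makes the blockwise bound $3m-2$ add up correctly. Your remark that the affine form forces $3m-3$ per block is backwards. Likewise, the extra saving of $2$ at attachment vertices in your fallback is superfluous.

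\emph{The sharpened claim is false.} Consider $\deg_{B'}(v)\leq 3\deg_B(v)-3$ for all but one vertex. A degree-$2$ vertex of $B$ lies on no chord, so your endpoint saving never applies to it on a non-root face. There can be many such vertices, not at most one. Concretely, take $B=C_n$ with $n\geq 8$. Any outerplanar chordal completion of $C_n$ is a triangulated $n$-gon: the added edges are non-crossing chords, and chordality forces every inner face to be a triangle. Its degree sum is therefore $4n-6$. But ``degree at most $3$ except one vertex of degree at most $4$'' allows a degree sum of at most $3n+1<4n-6$. So no choice of triangulation achieves your sharpened bound.
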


\begin{proof}
Given an outerplanar embedding of $G$, add a zig-zag triangulation on each internal face bounded by a cycle. This produces an outerplanar chordal completion $G'$ of $G$. Each non-isolated vertex $v$ is incident with at most $\deg_G(v)-1$ internal faces. So  $\deg_{G'}(v)\leq \deg_G(v)+2(\deg_G(v)-1)=3\deg_G(v)-2$. 
\end{proof}

Now consider general graphs of treewidth at most 2 (sometimes called \defn{series-parallel}). The next result says that the answer to \cref{AntonyQuestion} in the $k=2$ case is `yes' with $f(2,\Delta)=2$ and $g(2,\Delta)=3\Delta-2$.

\begin{thm}
\label{SeriesParallelChordalCompletion}
Every graph $G$ with treewidth at most 2 has a chordal completion $G'$ with treewidth at most 2 such that $\deg_{G'}(v)\leq\max\{3\deg_G(v)-2,0\}$ for each vertex $v$.
\end{thm}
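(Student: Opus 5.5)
Each component and each block can be handled separately, so we reduce to the $2$-connected case. Since $\tw(G)\leq 2$, every block is a $2$-connected graph of treewidth at most $2$ (or a $K_2$, or a single vertex). Suppose every block $X$ of $G$ has a chordal completion $X'$ with $\omega(X')\leq 3$ and $\deg_{X'}(v)\leq 3\deg_X(v)-2$ for every $v\in V(X)$. Gluing these completions along cut-vertices gives a chordal graph, because a clique-sum of chordal graphs along a single vertex is chordal, and the clique number is unchanged. At a cut-vertex $v$ lying in blocks $X_1,\dots,X_m$, we get
\[\deg_{G'}(v)=\sum_i \deg_{X_i'}(v)\leq \sum_i (3\deg_{X_i}(v)-2)\leq 3\deg_G(v)-2.\]
Isolated vertices give the term $0$. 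So it suffices to treat a $2$-connected graph $G$ with $\tw(G)\leq 2$.

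For the $2$-connected case I will use induction on $|V(G)|$ and the ear or series-parallel structure. A $2$-connected graph of treewidth at most $2$ has no $K_4$ minor, so it has a vertex of degree $2$. More usefully, it is either a cycle, or it has two vertices $x,y$ such that $G-\{x,y\}$ has at least two components, or such that $xy\in E(G)$ and $G-\{x,y\}$ is disconnected (a $2$-separation). The cleanest approach is probably via SPQR-like decomposition. Take a $2$-separation $\{x,y\}$ with parts $G_1,\dots,G_m$, $m\geq 2$. Form $G_i^+:=G[V(G_i)\cup\{x,y\}]+xy$, which is $2$-connected of treewidth at most $2$ and smaller. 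By induction, complete each $G_i^+$ and glue along the edge $xy$. A clique-sum along $K_2$ of chordal graphs is chordal, and $\omega\leq 3$ is preserved.

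The main obstacle is the degree count at $x$ and $y$, since the virtual edge $xy$ adds to the degree in every part. The simple $m$-part split is lossy, so I would strengthen the induction hypothesis. For a $2$-connected $G$ and a specified edge (or virtual edge) $e=xy$ on the outer structure, I would find a completion with $\deg_{G'}(v)\leq 3\deg_G(v)-2$ for all $v$, together with the sharper bound $\deg_{G'}(x)\leq 3\deg_G(x)-3$ when $xy\notin E(G)$ is virtual (the virtual edge counts as $1$ in $\deg_{G_i^+}$ but should cost only its own real contribution). Concretely, $\deg_{G_i^+}(x)=\deg_{G_i}(x)+1$, where $\deg_{G_i}(x)$ counts edges into $G_i$, and summing $3(\deg_{G_i}(x)+1)-2-1$ over $i$, with the shared edge $xy$ counted once, gives
\[3\deg_G(x)+m-3m+\dots\]
This needs to come out at most $3\deg_G(x)-2$ when $m\geq 2$, and I would tune the hypothesis until it does. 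If this bookkeeping becomes unwieldy, I would fall back on an alternative. Every $2$-connected series-parallel graph has an embedding-free ``nested ear'' structure, and we can mimic \cref{OuterplanarChordalCompletion}: for each $2$-separation we add the chord $xy$, and inside each remaining cycle-like face (an S-node of the SPQR tree, which is a cycle of real and virtual edges) we add a zig-zag triangulation. Then a vertex $v$ lies in at most $\deg_G(v)-1$ S-node cycles, as in an outerplanar face count, and each contributes at most $2$ extra edges. This yields $\deg_G(v)+2(\deg_G(v)-1)$. The key step to verify is that $v$ is incident to at most $\deg_G(v)-1$ such cycles and that virtual edges are not double counted. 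I would prove this by charging each S-node cycle at $v$ to a pair of consecutive real edge-paths leaving $v$.
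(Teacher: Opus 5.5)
Your reduction to blocks is fine, and your first route does close when the separating pair is an edge of $G$. In that case each $G_i^+$ contains the real edge $xy$, so $\sum_i\deg_{G_i^+}(x)=\deg_G(x)+m-1$ and $\sum_i(3\deg_{G_i^+}(x)-2)-(m-1)=3\deg_G(x)-2$ exactly. This is the paper's Case~1 (a clique separator of size~$2$).

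The genuine gap is a separating pair with $xy\notin E(G)$. That is the entire difficulty, and neither of your routes handles it. In the first route the plain count gives $3\deg_G(x)+1$. Your proposed sharpening $\deg_{G_i'}(x)\leq 3\deg_{G_i^+}(x)-3$ at a virtual edge still only gives $3\deg_G(x)-m+1$, which fails for $m=2$. You also give no argument that any sharpened bound survives the induction; ``tune the hypothesis until it does'' is exactly where the proof is missing. A sharpening symmetric in $x,y$ that works for $m=2$ would need $-4$ at both ends. That is already false for the $4$-cycle $(x,a,b,y)$, whose only chords $xb$ and $ay$ each hit an end. So any working version must be asymmetric.

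The fallback rests on a false count. Let $G$ be three internally disjoint $x$--$y$ paths, each with five internal vertices. Then $\deg_G(x)=3$, yet $x$ lies in three S-node cycles (each path closed by the virtual edge $xy$), not in $\deg_G(x)-1=2$ of them. Moreover, the new edge $xy$ is itself an extra neighbour that $\deg_G(v)+2(\deg_G(v)-1)$ ignores. With an unlucky orientation of the zig-zags, $x$ gets two chords in each cycle, so $\deg_{G'}(x)=3+1+6=10>7$. In general, a pole $v$ of $p_0$ P-nodes without a real edge lies in $\deg_G(v)-1+p_0$ S-nodes. Relative to two chords per S-node, you must therefore save three chords at $v$ for each such P-node.

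The missing idea is to control where the separating pair sits in each zig-zag. In a zig-zag triangulation of $(x_0,\dots,x_t)$, the ends of the cycle edge $x_0x_t$ get no chord and at most one chord respectively. So one puts the pair at $x_0,x_t$ and uses opposite orientations in parallel pieces. The paper builds this into its induction:
\begin{itemize}
\item If $G$ has no separating edge, it finds two induced paths $\widehat{P},\widehat{Q}$ through degree-$2$ vertices with the same ends $x,y$; otherwise contracting all such paths gives a minor of minimum degree~$3$.
\item It replaces them by the edge $xy$, giving a smaller $H$ with $\deg_H(x)=\deg_G(x)-1$.
\item It triangulates $\widehat{P}+xy$ and $\widehat{Q}+xy$ by zig-zags $A,B$ with $\deg_A(x)=2$ and $\deg_B(y)=2$.
\end{itemize}
Then $\deg_A(x)+\deg_B(x)\leq 5$, so $\deg_{G'}(x)\leq 3\deg_H(x)+1=3\deg_G(x)-2$, and symmetrically for $y$.

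Your SPQR route can be repaired the same way: root the tree, put each S-node's parent virtual edge at $x_0x_t$, and alternate which pole is $x_0$ among the children of each P-node. As written, however, the degree bound does not follow.
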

\begin{proof}

Proceed by induction on $|V(G)|$. If $|V(G)|\leq3$, then $G$ is chordal, so taking $G'=G$ shows the base case. Now suppose that $|V(G)|\geq4$ and the theorem holds for graphs on fewer vertices. If $G$ is disconnected, then applying induction to each component of $G$ gives a desired chordal completion of $G$. Hence it may be assumed that $G$ is connected. A clique $X$ in $G$ is a separator if $G-X$ is disconnected.

\textbf{Case 1.} $G$ has a clique separator of size $2$: Then there exists connected subgraphs $G_1, G_2\subseteq G$ with $G=G_1\cup G_2$, $G_1\cap G_2$ is a copy of $K_2$, and $|V(G_{1})|,|V(G_{2})|<|V(G)|$. Then for each $i\in [2]$, $\tw(G_{i})\leq\tw(G)\leq2$. Let $G_i'$ be the chordal completion of $G_i$ obtained by induction. Then $\tw(G_{i}')\leq2$ and $\deg_{G_{i}'}(v)\leq3\deg_{G_{i}}(v)-2$ for each $v\in V(G_{i}')$. Let $G':=G_1'\cup G_2'$. Since $G_1'\cap G_2'$ is a complete graph, $G'$ is chordal and $\tw(G')=\max\{\tw(G_1'), \tw(G_2')\} \leq 2$. Now consider any $u\in V(G')$. If $u\in V(G_{1}')\setminus V(G_{2}')$, then $\deg_{G'}(u) =\deg_{G_{1}'}(u)\leq3\deg_{G_{1}}(u)-2 =3\deg_{G}(u)-2$. The case of $u\in V(G_{2}')\setminus V(G_{1}')$ is symmetric. If $u\in V(G_{1}' \cap G_{2}')$, then 
\begin{align*}
\deg_{G'}(u)=\deg_{G_{1}'}(u)+\deg_{G_{2}'}(u)-1&\leq(3\deg_{G_{1}}(u)-2)+(3\deg_{G_{2}}(u)-2)-1\\
&=3\deg_{G}(u)-2,
\end{align*}
where the last equality holds since $\deg_{G_{1}}(u)+\deg_{G_{2}}(u)=\deg_{G}(u)+1$.

\textbf{Case 2.} 
$G$ has no clique separator of size $2$: Since $|V(G)|\geq4$, we have $\delta(G)\geq2$ and any degree-$2$ vertex along with its neighbours induces a $3$-vertex path. If $\Delta(G)\leq2$, then $G$ is a cycle and a zig-zag triangulation $G'$ of $G$ is a desired chordal completion of $G$. So we further assume that $\Delta(G)\geq3$.

Let $D$ be the set of degree-$2$ vertices in $G$. It is well-known that the minimum degree of any graph
is at most its treewidth (see \citep[Exercise~12.26]{Diestel5}). Therefore, $2\leq\delta(G)\leq\tw(G)\leq2$ and $D\not=\varnothing$. Since $G$ is connected and $\Delta(G)\geq3$, $G[D]$ is a forest in which each component is a path. Let $\mathcal{P}$ be the set of components in $G[D]$. For each $P\in\mathcal{P}$, let $\widehat{P}:=G[V(P)\cup N_{G}(V(P))]$. Since $G$ has no clique separator of size $2$, each $\widehat{P}$ is an induced path of length at least $2$ and $V(P)$ is the set of inner vertices of $\widehat{P}$. If no two paths in $(\widehat{P}:P\in\mathcal{P})$ have the same set of endpoints, then contracting each $\widehat{P}$ to an edge produces a minor in $G$ with minimum degree at least $3$, and hence treewidth at least $3$. Since the treewidth of any minor of $G$ is at most $\tw(G)$, we have $\tw(G)\geq3$; a contradiction. Hence there exists distinct $\widehat{P},\widehat{Q}$ with the same endpoints, say $x$ and $y$. Let $H$ be the graph obtained from $G$ by contracting $\widehat{P}\cup\widehat{Q}$ to the edge $xy$. Then $\tw(H)\leq\tw(G)\leq2$ and $|V(H)|\leq|V(G)|-2$. Let $H'$ be the chordal completion of $H$ obtained by induction. Then, $\tw(H')\leq2$ and $\deg_{H'}(v)\leq3\deg_{H}(v)-2$ for each $v\in V(H')$. Let $A$ be a zig-zag triangulation of the cycle $\widehat{P}+xy$ such that $\deg_{A}(x)=2$ and $\deg_{A}(y)\leq3$, and let $B$ be a zig-zag triangulation of the cycle $\widehat{Q}+xy$ such that $\deg_{B}(y)=2$ and $\deg_{B}(x)\leq3$ (as shown in \cref{fig: AUnionB}). Let $G':=H'\cup A\cup B$. Since $G'$ is obtained from the chordal graphs $H',A,B$ by pasting along the edge $xy$, $G'$ is a chordal completion of $G$. Moreover, $\tw(G')=\max\{\tw(H'),\tw(A'),\tw(B')\}=2$. Now consider any $u\in V(G')$. If $u\in V(H')\setminus\{x,y\}$, then $\deg_{G'}(u)=\deg_{H'}(u)\leq3\deg_{H}(u)-2=3\deg_{G}(u)-2$. If $u\in V(A\cup B)\setminus\{x,y\}$, then $\deg_{G'}(u)\leq4=3\deg_{G}(u)-2$. If $u\in\{x,y\}$, then $\deg_{A}(u)+\deg_{B}(u)\leq5$ and $\deg_{H}(u)=\deg_{G}(u)-1$, so
\[\deg_{G'}(u)=\deg_{H'}(u)+\deg_{A}(u)+\deg_{B}(u)-2\leq3\deg_{H}(u)+1=3\deg_{G}(u)-2\;.\]
Hence, $G'$ is a desired chordal completion of $G$.
\end{proof}


\begin{figure}[ht]
    \centering
    \begin{tikzpicture}[/tikz/xscale=1.2, /tikz/yscale=1.2]

        \begin{scope}[every node/.style={draw, shape = circle, fill = black, minimum size = 0.1cm, inner sep=1.2pt}]
            \node (x) at (0,0.9) {};
            \node (y) at (0,-0.9) {};
        \end{scope}

        \begin{scope}
            \draw[-, draw, thick] (x) to [out=160,in=90] (-4, 0) to [out=-90,in=200] (y);
            \draw[-, draw, thick] (x) to [out=20,in=90] (4, 0) to [out=-90,in=-20] (y);
            \path[-, draw, thick] (x) edge (y);
        \end{scope}

        \begin{scope}[every node/.style={draw, shape = circle, fill = black, minimum size = 0.3pt, inner sep=1.2pt}]
            \node (v1) at (-1.1,1.2) {};
            \node (v2) at (-1.9,-1.28) {};
            \node (v3) at (-2.7,1.24) {};
            \node (v4) at (-3.5,-0.95) {};

            \node (u4) at (-3.91,0.46) {};
        \end{scope}

        \begin{scope}[every edge/.style={draw, very thick, LightGray}]
            \path [-] (y) edge (v1);
            \path [-] (v1) edge (v2);
            \path [-] (v2) edge (v3);
            \path [-] (v3) edge (v4);
        \end{scope}

        \begin{scope}[every node/.style={draw, shape = circle, fill=black, minimum size = 0.3pt, inner sep=1.2pt}]
            \node (v1) at (1.1,-1.2) {};
            \node (v2) at (1.9,1.28) {};
            \node (v3) at (2.7,-1.24) {};
            \node (v4) at (3.5,0.95) {};

            \node (u4) at (3.91,-0.46) {};
        \end{scope}

        \begin{scope}[every edge/.style={draw, very thick, LightGray}]
            \path [-] (x) edge (v1);
            \path [-] (v1) edge (v2);
            \path [-] (v2) edge (v3);
            \path [-] (v3) edge (v4);
        \end{scope}
        
        \begin{scope}
            \node (x) at (0,0.9+0.4) {$x$};
            \node (y) at (0,-0.9-0.4) {$y$};
            \node (A) at (-2.5,-1.8) {$A$};
            \node (B) at (2.5,-1.8) {$B$};
        \end{scope}
        
    \end{tikzpicture}
    \caption{Drawing of $A\cup B$ in \cref{SeriesParallelChordalCompletion}.}
    \label{fig: AUnionB}
\end{figure}

Given that the answer to \cref{AntonyQuestion} in the $k\leq 2$ case is `yes' with $f(k,\Delta)=k$, it would be tempting to guess the answer is `yes' with $f(k,\Delta)=k$ for any value of $k$. However, this would be false. \citet{DO95} stated (without proof) that for every $n$ there is a graph $G$ with treewidth 3 and maximum degree 4 such that in every tree-decomposition of $G$ with width 3 there is a vertex with spread at least $n$. The next result strengthens this conclusion, and generalises for treewidth at least 3
(using the same graph as \citet{DO95} in the $k=3$ case). In particular, \cref{Factor2} shows that 
to obtain a `yes' answer to \cref{AntonyQuestion} it is necessary that $f(k)\geq 2k-2$ (for $k\geq 3$) regardless of $g(k,\Delta)$.

\begin{prop}
\label{Factor2}
For any integers $k\geq 3$ and $n\geq 2k-1$ there is an $n$-vertex graph $G$ with $\tw(G)= k$ and $\Delta(G)= 2k-2$ such that every chordal completion $G'$ of $G$ with treewidth at most $2k-3$ (that is, $\omega(G')\leq 2k-2$) has maximum degree $n-1$. 
\end{prop}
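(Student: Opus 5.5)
I would take $G$ to be the $(k-1)$-th power of a path with its ends joined. Let $P=(v_1,\dots,v_n)$ and let $G$ be obtained from $P^{k-1}$ (with $v_iv_j\in E$ iff $1\le|i-j|\le k-1$) by adding the single edge $v_1v_n$.

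\textbf{Routine parameters.} I would check these first.
\begin{itemize}
\item Inner vertices have degree exactly $2k-2$ (this needs $n\ge 2k-1$). The ends $v_1,v_n$ have degree $k\le 2k-2$. So $\Delta(G)=2k-2$.
\item Adding $v_1$ to every bag of the standard width-$(k-1)$ path-decomposition of $P^{k-1}$, with bags the windows $\{v_i,\dots,v_{i+k-1}\}$, gives $\tw(G)\le k$.
\item For $\tw(G)\ge k$, I would exhibit a bramble or a $K_{k+1}$-minor. Contract the far part of the path onto $v_1$: this makes $v_1$ adjacent to a $k$-clique window, giving $K_{k+1}$.
\end{itemize}

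\textbf{Main claim.} Let $G'$ be any chordal completion with $\omega(G')\le 2k-2$, and fix a clique tree $(B_x:x\in V(T))$ of $G'$ with all bags of size at most $2k-2$. I would show that $v_1$ or $v_n$ lies in every bag. Then that vertex is adjacent in $G'$ to every other vertex, so it has degree $n-1$.

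The key separation fact concerns any edge $xy$ of $T$ and the separator $S=B_x\cap B_y$. Since $S\subsetneq B_x$, we have $|S|\le 2k-3$. Suppose $S$ contains neither $v_1$ nor $v_n$. Consider the cyclic structure of $G$: deleting $S$ from the ``band'' $P^{k-1}$ leaves it connected unless $S$ contains $k-1$ consecutive vertices $v_{i},\dots,v_{i+k-2}$. Cutting $G$ into two nonempty sides then requires cutting the band in two places, which needs $2(k-1)$ vertices, since the edge $v_1v_n$ is intact. This contradicts $|S|\le 2k-3$. So either $G-S$ is connected, or $S\cap\{v_1,v_n\}\neq\emptyset$.

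If $G-S$ is connected, the two sides of the edge $xy$ cannot both contain vertices outside $S$. So one side consists only of vertices of $S$, and such an edge can be contracted away in a reduced clique tree. Hence I may assume every separator of the reduced clique tree meets $\{v_1,v_n\}$.

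\textbf{Main obstacle.} The hard step is upgrading ``every separator meets $\{v_1,v_n\}$'' to ``a single one of $v_1,v_n$ lies in every bag''. I would argue as follows. The set of bags containing $v_1$ is a subtree $T_1$, and likewise $T_n$ for $v_n$. Every edge of $T$ lies in $T_1\cup T_n$, and $T_1\cap T_n\neq\emptyset$ because $v_1v_n\in E$. Suppose neither subtree is all of $T$. Then there are leaves or branches of $T$ covered only by $T_1$, and others only by $T_n$. I would pick bags $B_p$ missing $v_n$ and $B_q$ missing $v_1$. Along the $T$-path from $p$ to $q$ there is a separator $S$ containing both $v_1$ and $v_n$. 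I would then redo the counting on $G-\{v_1,v_n\}$, which is a plain path power. Separating the private vertices of $B_p$ from those of $B_q$ would require about $k-1$ further vertices on each of two sides, possibly exceeding $2k-3$.

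This counting is delicate near the ends of the path, where the windows are short. That may require choosing $n$ comfortably large relative to $k$. The statement only assumes $n\ge 2k-1$, so for small $n$ I would handle the case separately by showing $G'$ must be complete; I expect that to be the fiddly part.
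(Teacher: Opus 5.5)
There is a genuine gap, and it lies in your choice of graph, not in the counting. For your $G$ (the band $P_n^{k-1}$ plus the single edge $v_1v_n$), the step you call the main obstacle is false, and so is the statement itself.

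Here is a counterexample. Let $m=\lfloor n/2\rfloor$ and take the path-decomposition with the following bags, in order:
\begin{itemize}
\item $\{v_1\}\cup\{v_i,\dots,v_{i+k-1}\}$ for $1\le i\le m$;
\item $X=\{v_1,v_n,v_{m+1},\dots,v_{m+k-1}\}$;
\item $\{v_n\}\cup\{v_i,\dots,v_{i+k-1}\}$ for $m+1\le i\le n-k+1$.
\end{itemize}
This is a tree-decomposition of $G$. Each window covers the band edges starting at its first vertex, $X$ covers $v_1v_n$, and each $v_j$ lies in consecutive bags. Every bag has size at most $k+1\le 2k-2$. Filling in the bags gives a chordal completion $G'$ with $\omega(G')\le k+1$. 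In it, $\deg_{G'}(v_1)=m+k-1$ and $\deg_{G'}(v_n)=n-m$, and every other vertex has degree at most $2k$. So for $n\ge 2k+2$ the maximum degree is below $n-1$.

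Here the subtrees $T_1$ and $T_n$ meet only in the node $X$. Hence no separator contains both $v_1$ and $v_n$, contrary to what your final step assumes. Your separator observation is correct: every separator of size at most $2k-3$ meets $\{v_1,v_n\}$. But it only shows that each bag contains $v_1$ or $v_n$. With two hubs joined by a single edge, the hub role can be split between them, and no choice of large $n$ repairs this. A minor point: $v_2,\dots,v_{k-1}$ have degrees $k,\dots,2k-3$, not $2k-2$, although $\Delta(G)=2k-2$ still holds.

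The missing idea is to use a \emph{single} hub. The paper takes $P_{n-1}^{k-1}$ on $w_1,\dots,w_{n-1}$ and adds one vertex $v$ adjacent to $w_1,\dots,w_{k-1}$ and to $w_{n-k+1},\dots,w_{n-1}$. Then the only vertices of degree at most $2k-3$ are $w_1,\dots,w_{k-2},w_{n-k+2},\dots,w_{n-1}$, and all of them are neighbours of $v$.

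The paper's argument runs as follows. Take a clique tree of $G'$ in which no bag is contained in a neighbouring bag. Each leaf bag $B_x$ has a vertex $y$ missing from its neighbouring bag, so $N_{G'}[y]\subseteq B_x$ and $\deg_G(y)\le 2k-3$. Hence $v\in N_{G'}(y)\subseteq B_x$. So $v$ lies in every leaf bag, hence in every bag, and $\deg_{G'}(v)=n-1$.

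Your separator counting also goes through cleanly for this graph. Let $S$ be a set with $v\notin S$ and $|S|\le 2k-3$. Disconnecting $G-S$ would need two disjoint runs of $k-1$ deleted vertices, or one run together with an entire end-window of $v$. Either way that is $2k-2$ vertices. So $v$ lies in every separator of a reduced clique tree, and therefore in every bag, with no delicate end effects.
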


\begin{proof}
Let $G$ be the graph with $V(G)=\{v,w_1,\dots,w_{n-1}\}$ where $w_iw_j\in E(G)$ if and only if $1\leq |i-j|\leq k-1$, and $N_G(v)=\{w_1,\dots,w_{k-1}\}\cup\{w_{n-k+1},\dots,w_{n-1}\}$. Since $\{v\},\{w_{1}\},\ldots,\{w_{k-1}\},\{w_{k},\ldots,w_{n-1}\}$ are the branch sets of a $K_{k+1}$ minor-model in $G$, $\tw(G)\geq k$. On the other hand, a path-decomposition of $G$ with width $k$ is given by $( \{ w_1,\dots,w_{k},v\}, 
\{ w_2,\dots,w_{k+1},v\}, \dots,
\{ w_{n-k},\dots,w_{n-1},v\})$, so $\tw(G)\leq \pw(G)\leq k$. Hence $\tw(G)=k$. Also note that $\Delta(G)=2k-2$, and that $w_1,\dots,w_{k-2},w_{n-k+2},\dots,w_{n-1}$ are the only vertices with degree at most $2k-3$. 

Consider any chordal completion $G'$ of $G$ with $\omega(G')\leq 2k-2$. So $G'$ has a tree-decomposition $(B_x:x\in V(T))$ with width at most $2k-3$, where each bag is a clique, and no bag is a subset of a neighbouring bag (otherwise merge the bags). Let $x$ be a leaf node of $T$. So there is a vertex $y\in B_x$ that is not in the neighbouring bag of $B_x$. Thus $N_{G'}[y]\subseteq B_x$, implying $\deg_{G'}(y)\leq |B_x|-1 \leq 2k-3$. So $y\in\{w_1,\dots,w_{k-2}\}\cup\{w_{n-k+2},\dots,w_{n-1}\}$ and $v\in N_{G'}(y)$, implying $v\in B_x$. That is, $v$ is in every leaf bag. By the vertex-property of tree-decompositions, $v$ is in every bag. Since every bag is a clique, $v$ is adjacent to every vertex in $G'$.
\end{proof}

\cref{MakeChordalCompletion,Factor2} imply the following lower bound on the width of a tree-decomposition with bounded spread, which strengthens a similar lower bound by \citet{BG26}, who showed that if there are constants $c,c'>0$ such that every graph $G$ has a tree-decomposition of width at most $c \tw(G)$ in which each vertex $v$ has spread at most $c' (\deg(v) + 1)$, then $c\geq 2$.

\begin{cor}
If there is a constant $c$ and a function $f$ such that every graph $G$ with treewidth $k$ and maximum degree $\Delta$ has a tree-decomposition of width at most $ck$ in which each vertex has spread at most $f(k,\Delta)$, then $c\geq 2$. 
\end{cor}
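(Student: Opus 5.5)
The plan is to argue by contradiction, combining \cref{MakeChordalCompletion} with \cref{Factor2}. Suppose $c<2$ and that $f$ is a function for which every graph $G$ with treewidth $k$ and maximum degree $\Delta$ has a tree-decomposition of width at most $ck$ in which every vertex has spread at most $f(k,\Delta)$.

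First choose the parameters. Pick an integer $k\geq 3$ large enough that $ck\leq 2k-3$; this is possible because $(2-c)k\geq 3$ for all sufficiently large $k$. Set $\Delta:=2k-2$ and $s:=f(k,\Delta)$. Then pick $n\geq 2k-1$ with
\[ n-1 > (2k-3)s \geq \lfloor ck\rfloor\, s. \]
Apply \cref{Factor2} to get an $n$-vertex graph $G$ with $\tw(G)=k$ and $\Delta(G)=2k-2$. The assumption then provides a tree-decomposition of $G$ of width $w\leq \lfloor ck\rfloor\leq 2k-3$ in which every vertex has spread at most $s$.

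Next, apply the first part of \cref{MakeChordalCompletion}: make every bag a clique. This gives a chordal completion $G'$ of $G$ with $\omega(G')\leq w+1\leq 2k-2$, so $\tw(G')\leq 2k-3$. Its maximum degree is at most $ws\leq (2k-3)s<n-1$. The degree bound holds because a vertex of spread at most $s$ has, in each of its bags, at most $w$ other vertices.

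This contradicts \cref{Factor2}, which says that every chordal completion of $G$ with treewidth at most $2k-3$ has maximum degree exactly $n-1$. Hence $c\geq 2$.

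The only delicate point is the order of the choices. The spread bound $f(k,\Delta)$ must be fixed before $n$ is chosen, which works because in \cref{Factor2} the maximum degree $2k-2$ does not depend on $n$. We also need $k\geq 3$ and $ck\leq 2k-3$ at the same time, and both are achieved by taking $k$ large.
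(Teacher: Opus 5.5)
Your proof is correct and follows the paper's approach: the paper derives the corollary directly from \cref{MakeChordalCompletion,Factor2}. You have also made explicit the parameter choices the paper leaves implicit, namely $k$ with $(2-c)k\geq 3$, then $s=f(k,2k-2)$, then $n$ with $n-1>(2k-3)s$, which turn the chordal completion obtained from the decomposition into a contradiction with \cref{Factor2}.
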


Now consider \cref{AntonyQuestion} for any value of $k$. By \cref{MakeChordalCompletion}. the $O(k\Delta^2)$ bound on the domino treewidth by \citet{Bodlaender-DMTCS99} implies a positive answer to \cref{AntonyQuestion} with  $f(k,\Delta)$ and $g(k,\Delta)$ in $O(k\Delta^2)$. Applying \cref{MakeChordalCompletion} to the tree-decomposition in \cref{SmallSpread}, \citet{Wood25} concluded the following  positive answer to \cref{AntonyQuestion} with $f(k)\in O(k)$ and $g(k,\Delta)\in O(k\Delta)$. 

\begin{thm}[\protect\citep{Wood25}] 
\label{ChordalCompletionMaxDegree}
Every graph $G$ with treewidth $k$ and maximum degree $\Delta$ has a chordal completion with treewidth $14k+13$ and maximum degree  $k(\Delta+1)$. 
\end{thm}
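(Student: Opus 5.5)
The plan is to deduce \cref{ChordalCompletionMaxDegree} directly from \cref{SmallSpread} via the first part of \cref{MakeChordalCompletion}, adjusting the degree count so that it depends on the bag sizes and the spread. First, apply \cref{SmallSpread} to $G$. This gives a tree-decomposition $(B_x:x\in V(T))$ of width at most $14k+13$ in which each vertex $v$ lies in at most $\deg_G(v)+1\leq \Delta+1$ bags. Next, let $G'$ be the graph obtained from $G$ by making every bag a clique. As noted before \cref{MakeChordalCompletion}, $(B_x)$ is then a tree-decomposition of $G'$ in which every bag is a clique. Hence $G'$ is a chordal completion of $G$ with $\tw(G')\leq 14k+13$.

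For the degree bound, each neighbour of $v$ in $G'$ shares a bag with $v$. Summing over the at most $\Delta+1$ bags containing $v$ gives
\[\deg_{G'}(v)\leq \sum_{x:\,v\in B_x}(|B_x|-1).\]
A naive count using $|B_x|-1\leq 14k+13$ only yields $(14k+13)(\Delta+1)$. The claimed bound $k(\Delta+1)$ is sharper, so this crude count is not enough.

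The main obstacle is therefore to obtain the constant $k$ in place of $14k+13$. To do this I would inspect the construction in \citet{Wood25} rather than use \cref{SmallSpread} as a black box. The aim is to show that the bags containing a fixed vertex $v$ contribute at most about $k$ new neighbours each, or that the decomposition can be pruned so that this holds. Concretely, I would try to show that for each bag $B_x$ containing $v$, the new $G'$-neighbours of $v$ lying in $B_x$ but not already counted in an adjacent bag form a set of size at most $k$, coming from a bag of an optimal width-$k$ decomposition. Along the $\deg(v)+1$ bags containing $v$, this would give at most $k(\Delta+1)$ neighbours. If that counting cannot be made to work, I would accept the weaker constant $O(k\Delta)$, which still gives the qualitative answer to \cref{AntonyQuestion} with $f(k)\in O(k)$ and $g(k,\Delta)\in O(k\Delta)$. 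That qualitative statement is what the surrounding discussion and the proof of \cref{LinearWidthBoundedSpread} actually use.
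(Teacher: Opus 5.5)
Your first paragraph is exactly the paper's proof. The paper derives this theorem in one line, by applying the first part of \cref{MakeChordalCompletion} to the decomposition from \cref{SmallSpread}, and gives no further argument. Your diagnosis of the constant is also correct. That derivation gives treewidth at most $14k+13$ and spread at most $\Delta+1$, hence
\[\deg_{G'}(v)\leq (14k+13)(\deg_G(v)+1)\leq (14k+13)(\Delta+1),\]
and nothing sharper. The ``$k(\Delta+1)$'' in the statement appears to be the bound ``$ks$'' of \cref{MakeChordalCompletion}, with its $k$ read as the treewidth. In that observation $k$ denotes the \emph{width}, which here is $14k+13$. Consistently, the paper's own gloss claims only $g(k,\Delta)\in O(k\Delta)$. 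So your count already proves everything the paper's argument actually establishes.

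You should drop the proposed sharpening. The claim that each bag containing $v$ adds at most $k$ new $G'$-neighbours, ``coming from a bag of an optimal width-$k$ decomposition'', has no support. \cref{SmallSpread} controls only bag size (at most $14k+14$) and spread. Nothing in it ties its bags to single bags of a width-$k$ decomposition, so a bag containing $v$ can contribute up to $14k+13$ new neighbours. If you relied on that step to reach the literal constant $k(\Delta+1)$, it would be a genuine gap, and neither you nor the paper supplies a way to close it. Instead, state and prove the bound $(14k+13)(\Delta+1)$, and note its knock-on effect. In \cref{LinearWidthBoundedSpreadPrecise}, \cref{kTree} gives width at most $\Delta(G')$, so that width becomes $(14k+13)(\Delta+1)$ rather than $k(\Delta+1)$. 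This is still $O(k\Delta)$, with the spread bound $2^{14(k+1)}$ unchanged, so it still proves \cref{LinearWidthBoundedSpread}, as you said.
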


This result is the starting point for our proof in \cref{BoundedSpread}. 

We now show that the bound on the maximum degree in \cref{ChordalCompletionMaxDegree} is best possible up to a constant factor. That is, in \cref{AntonyQuestion}, $g(k,\Delta)\in\Omega(k\Delta)$ regardless of $f(k,\Delta)$.

\begin{prop}\label{CompletionLowerBound}
For any integers $k,d\geq1$, there is a graph $G$ with treewidth $k$ and maximum degree $d+k-1$, such that every chordal completion of $G$ has maximum degree at least $\frac{k+1}{2}d$.
\end{prop}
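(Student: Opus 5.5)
The plan is to build a graph in which every chordal completion is forced to contain a large clique-like neighbourhood around some vertex. The starting point is the one-dimensional fact that a chordal completion of a long cycle must contain a triangle through a ``chord'', so some vertex receives an extra neighbour from far away. Blowing up each cycle vertex into a clique, and attaching the blow-ups so that degrees stay small, should turn that single extra neighbour into about $\frac{k+1}{2}d$ extra neighbours. This is only a direction: I have not fixed a construction that meets both the treewidth and degree targets.

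Concretely I would try the following shape. Fix a cycle-like or tree-like skeleton in which each node is replaced by a clique of size about $\frac{k+1}{2}$. Join consecutive cliques sparsely, so that each vertex has exactly $k-1$ neighbours inside its own local structure and $d$ neighbours towards the rest. The intended outcome is treewidth exactly $k$ and maximum degree exactly $d+k-1$. The treewidth upper bound would come from an explicit path-decomposition, as in the proof of \cref{Factor2}. The lower bound $\tw(G)\geq k$ would come from a $K_{k+1}$ minor, also as in \cref{Factor2}. The degree count is then a direct check.

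For the lower bound on the maximum degree of a chordal completion $G'$, I would argue as in \cref{Factor2}. Take a clique tree $(B_x:x\in V(T))$ of $G'$ with no bag contained in a neighbouring bag. Any vertex that lies only in a leaf bag is simplicial, so its whole closed neighbourhood in $G'$ sits inside that one bag.

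The intended dichotomy is as follows. Either some bag of $G'$ contains many of the local cliques, in which case every vertex of that bag has degree at least $\frac{k+1}{2}d$. Or the bags are small, in which case separation properties of the skeleton force a single vertex to lie in bags meeting many different local cliques. Each such clique contributes about $\frac{k+1}{2}$ new neighbours, and there are about $d$ of them, again giving $\frac{k+1}{2}d$. The factor $\frac12$ should come from the case split, since $G'$ either makes $d$ half-sized blocks adjacent to one vertex or merges them into one bag.

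I expect the main obstacle to be choosing the construction so that both extremes of this dichotomy are genuinely forced. To make the small-bags case bite, I would first prove a counting lemma: every minimal separator of the skeleton that splits it into two large parts must meet about $d$ of the local cliques in at least half of each. Such a separator is a clique of $G'$, and I would then use the vertex-property of the clique tree to locate one vertex whose bags see all of it.
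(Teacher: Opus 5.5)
There is a genuine gap. As you say yourself, no graph is actually fixed, so neither the treewidth and degree claims nor the lower bound are established. The lower-bound mechanism you sketch also does not work as stated.

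\begin{itemize}
\item Your parameters are inconsistent: a clique of size about $\frac{k+1}{2}$ cannot give each vertex $k-1$ neighbours inside its local structure.
\item The ``small bags'' branch of your dichotomy, and the separator counting lemma it relies on, are only asserted.
\item More seriously, the cycle intuition points the wrong way. A cycle has a chordal completion of maximum degree $4$ (the zig-zag triangulation). Blowing that up shows that any spanning subgraph of $C_n\boxtimes K_m$ has a chordal completion of maximum degree $O(m)$, independent of $n$. So ``one extra neighbour from far away'' does not amplify into $\Omega(kd)$.
\end{itemize}
The factor $d$ has to come from a single high-degree vertex of the skeleton, so the skeleton should be a star, not a cycle. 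No clique-tree or separator argument is needed.

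The missing idea is a direct count of forced fill edges. Take $G:=K_{1,d}\square K_k$: a hub clique $\{0\}\times[k]$ joined by a perfect matching to each of $d$ leaf cliques $\{\ell\}\times[k]$.
\begin{itemize}
\item \emph{Degree.} Hub vertices have degree $d+k-1$ and leaf vertices have degree $k$, so $\Delta(G)=d+k-1$.
\item \emph{Treewidth at least $k$.} There is a $K_{k+1}$ minor with branch sets $\{(0,1)\},\dots,\{(0,k)\},\{1\}\times[k]$.
\item \emph{Treewidth at most $k$.} For each $\ell$, take a path-decomposition of $G[\{0,\ell\}\times[k]]$ that swaps one leaf vertex for one hub vertex at a time, ending in the bag $\{0\}\times[k]$. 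Gluing these along that bag gives width $k$.
\item \emph{Lower bound.} For each $\ell$ and each $i<j$, the cycle $(0,i),(\ell,i),(\ell,j),(0,j)$ is a chordless $4$-cycle. So any chordal completion $H$ contains one of its diagonals, $(0,i)(\ell,j)$ or $(0,j)(\ell,i)$. These diagonals are distinct for distinct triples $(\ell,i,j)$, and all of them join the hub to the leaves. Together with the $kd$ matching edges, $H$ has at least $\binom{k}{2}d+kd$ hub--leaf edges. Averaging over the $k$ hub vertices gives one of degree at least $\frac{k+1}{2}d$.
\end{itemize}
The factor $\frac12$ comes from this averaging, since $\binom{k}{2}/k=\frac{k-1}{2}$, and not from any case split.
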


\begin{proof}
Write $V(K_{1,d})=\{0\}\cup [d]$ so that each $\ell\in [d]$ is a leaf in $K_{1,d}$, and write $V(K_k)=[k]$. Let $G:=K_{1,d}\square K_k$. Then $\Delta(G)=\Delta(K_{1,d})+\Delta(K_k) = d+k-1$ and $V(G) = (\{0\}\cup [d])\times [k]$.

The following shows that $\tw(G)=k$: For each $\ell\in [d]$, let $\mathcal{T}_\ell$ be the path-decomposition of $G[\{0,\ell\}\times [k]]$ given by
\begin{align*}
\mathcal{T}_\ell=\big(\{(0,1), (\ell,1), \dots, (\ell,k)\}, &\;\{(0,1), (0,2), (\ell, 2), \dots, (\ell,k)\},\dots\\
&\dots, \{(0,1), \dots, (0,k), (\ell, k)\}, \{(0,1), \dots, (0,k)\}\big).
\end{align*}
Since $G=\bigcup_{\ell\in [d]} G[\{0,\ell\}\times [k]]$ and each $\mathcal{T}_\ell$ has $\{0\}\times [k]$ as its last bag, identifying the last bag of each of the $\mathcal{T}_\ell$'s produces a tree-decomposition of $G$ with width $k$, so $\tw(G)\leq k$. On the other hand, since $\{(0,1)\}, \dots, \{(0,k)\}, \{1\}\times [k]$ are the branch sets of a $K_{k+1}$ minor-model in $G$, $\tw(G)\geq k$. Hence $\tw(G)=k$.


Now consider any chordal completion $H$ of $G$. For each $\ell\in [d]$ and $i,j\in [k]$ with $i<j$, let $C_{\ell,i,j}:=G[\{(0,i),(\ell,i),(\ell,j),(0,j)\}]$ and let $\overline{C}_{i,j,\ell}$ be its complement graph. Since $C_{\ell,i,j}$ is a chordless $4$-cycle in $G$, there exists $e_{\ell,i,j}\in E(\overline{C}_{i,j,\ell})\cap E(H)$. We claim that the sets in $(E(\overline{C}_{i,j,\ell}) : \ell\in [d],\, i,j\in [k], i<j)$ are pairwise disjoint, hence the edges in $(e_{\ell,i,j} : \ell\in [d],\, i,j\in [k], i<j)$ are pairwise distinct: If $E(\overline{C}_{\ell',i',j'})\cap E(\overline{C}_{\ell,i,j})\not=\emptyset$, then $C_{\ell',i',j'}$ and $C_{\ell,i,j}$ have a common pair of non-adjacent vertices,
so $\{(0,i),(\ell,j)\}=\{(0,i'), (\ell',j')\}$, or $\{(0,i),(\ell,j)\}=\{(0,j'), (\ell',i')\}$, or $\{(0,j),(\ell,i)\}=\{(0,i'), (\ell',j')\}$, or $\{(0,j),(\ell,i)\}=\{(0,j'), (\ell',i')\}$. 
Since $\ell$ and $\ell'$ are positive, all cases imply $\ell=\ell'$ and $\{i,j\}=\{i',j'\}$. 
Since $i<j$ and $i'<j'$, 
$i=i$ and $j=j'$. Hence $(\ell,i,j)=(\ell',i',j')$ and the desired claim holds. The number of edges in $H$ between $\{0\}\times [k]$ and $[d]\times [k]$ is at least $|\{e_{\ell,i,j} : \ell\in [d], i,j\in [k], i<j\}|+kd=\binom{k}{2}d+kd$. So for some $i\in [k]$, $\Delta(H) \geq \deg_H((0,i))\geq \frac{1}{k}(\binom{k}{2}d+kd) = \frac{k+1}{2}d$.
\end{proof}

\section{\boldmath Bounded Spread and \texorpdfstring{$O(\Delta)$}{O(∆)} Width}
\label{BoundedSpread}

This section constructs a tree-decomposition of a graph $G$ with width $O(\tw(G)\,\Delta(G))$ and spread bounded by a function of $\tw(G)$. First consider outerplanar graphs. The spread bound of 3 in the following result is optimal because of \cref{OuterplanarLowerBound}.

\begin{lem}
\label{EdgeMaximalOuterplanar}
Every chordal outerplanar graph $G$ with maximum degree $\Delta\geq1$ has a tree-decomposition with width at most $2\Delta-1$ and spread at most 3. 
\end{lem}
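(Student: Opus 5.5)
The plan is to work one block at a time and then glue the pieces. Since $G$ is chordal and outerplanar, every $2$-connected block of $G$ with at least $3$ vertices is a triangulated polygon, that is, a maximal outerplanar graph. Its weak dual (the triangles, adjacent when they share an internal edge) is a tree of maximum degree at most $3$. Blocks that are single edges are trivial.

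For a maximal outerplanar block I would build the decomposition as follows.
\begin{enumerate}[(1)]
\item Fix a root vertex $r$ and take BFS layers $L_0=\{r\},L_1,\dots$. By outerplanarity, for each $v\in L_i$ its neighbours in $L_{i+1}$ (its ``children'') are consecutive on the outer cycle and induce a path $P_v$.
\item Each $x\in L_{i+1}$ has at most two neighbours in $L_i$, and if it has two, say $u$ and $w$, then $uwx$ is a triangle and $x$ is the common endpoint of $P_u$ and $P_w$.
\item Take one bag per vertex,
\[X_v:=\{v\}\cup V(P_v)\cup\{\text{the at most two vertices of } L_i \text{ adjacent to } v \text{ on the outer layer-path}\},\]
possibly merged with a neighbouring bag along a shared triangle. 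Index the bags by a tree in which $X_x$ hangs off $X_u$ for one chosen parent $u$ of $x$, and consecutive siblings $u,w$ in $L_i$ with a common child are joined through the bag of that shared triangle.
\end{enumerate}
In this scheme each vertex $x$ lies in its own bag and in the bags of its at most two parents. That gives spread at most $3$, and vertex-connectivity holds because the bags of the two parents are linked through the shared-triangle bag. Each bag is $\{v\}$, at most $\deg(v)-1$ children, and at most two siblings. Where bags must be merged, the worst case is the union of two such neighbourhoods sharing the triangle, which has at most $2\Delta$ vertices. This is where the width bound $2\Delta-1$ comes from.

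Gluing along cut vertices is the next issue, since a cut vertex $c$ lying in several blocks would otherwise pick up spread from each of them. To avoid this, I would process the block tree from the root. Each child block is rooted at its cut vertex $c$, so that $c$ plays the role of $r$ and appears only in its own root bag in that block. That root bag is then merged into the bag containing $c$ in the parent block, or attached to it by a tree edge. The risk is that the root bag contains all of $N(c)$ within the block and so blows up the width. To control this, the root bag of a child block will contain $c$ and its children there, and the bag for $c$ as a whole will be taken as $\{c\}\cup N(c)$, which has at most $\Delta+1\le 2\Delta$ vertices. Its other occurrences are only in the bags of its parents in the parent block.

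I expect the main obstacle to be the bookkeeping in step (3): making the tree precise so that every edge inside a layer (between siblings) and every edge between layers is covered by some bag, while keeping each vertex in at most three bags. The width count $2\Delta$ also needs to be verified in the case where a bag must absorb a vertex's children together with a neighbouring sibling's children. If the BFS-layer construction becomes messy, I would fall back to an induction on the weak dual tree of each block. There I would repeatedly remove an ear (a degree-$2$ vertex on the outer cycle), place it in the bag of its two neighbours, and check the spread and width invariants directly.
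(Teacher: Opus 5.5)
\textbf{There is a genuine gap, in step (3).} Your overall shape (BFS layers, one bag per vertex built from part of the next layer) is close to the paper's. But step (3), which you flag yourself, is where all the content of the lemma lies, and as specified it does not work.

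Take the fan with apex $r$ and link path $u_1\dots u_m$. Glue a triangle $u_ju_{j+1}y_j$ onto each edge $u_ju_{j+1}$, then glue onto each outer edge $u_jy_j$ a fan centred at $u_j$ with $\Delta-5$ new vertices. With $m=\Delta$ this is a triangulated polygon of maximum degree $\Delta$. Run BFS from $r$. Your construction fails in several independent ways on this graph.
\begin{itemize}
\item \emph{Step (1) is false.} The children of $u_j$ are $y_{j-1}$ and the path starting at $y_j$, and these are not adjacent. In general the children of $v$ split according to whether they are dominated by $\{v\}$ or by one of the two layer edges at $v$.
\item \emph{The spread count is wrong.} It ignores that $X_v$ contains the layer-neighbours of $v$, so $u_j$ lies in $X_r,X_{u_{j-1}},X_{u_j},X_{u_{j+1}}$.
\item \emph{The index structure is not a tree.} $X_{u_j}$ and $X_{u_{j+1}}$ both hang off $X_r$, so joining them through the bag of the triangle $u_ju_{j+1}y_j$ closes a cycle. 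It also adds a further bag for $u_j,u_{j+1},y_j$.
\item \emph{Merging does not stay local.} Every consecutive pair $u_j,u_{j+1}$ shares a child, so merges cascade along $u_1\dots u_m$. The result is a single bag containing $L_1\cup L_2$, of size about $\Delta^2$ rather than $2\Delta$.
\item \emph{The ear-removal fallback has the same issue} in its naive form. A new bag $\{u,w,x\}$ per removed ear gives the apex of a fan spread about $\Delta$, and you do not say which invariant prevents this.
\end{itemize}

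\textbf{The missing idea.} Organise each layer by \emph{child components} (a component of $G[V_i]$ together with the clique of $V_{i-1}$ dominating it), not by the children of single vertices. Put each child component as a whole into one bag, and store children in the bag of the \emph{predecessor} on the layer path.

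The paper runs the BFS on all of $G$, so no block decomposition is needed. (Your bag $\{c\}\cup N(c)$ would in any case put the parents of $c$ into an extra bag.) By chordality and the excluded $K_4$ and $K_{2,3}$ minors, each component of $G[V_i]$ is a path, and its set $K$ of neighbours in $V_{i-1}$ is a clique with $|K|\in\{1,2\}$. The construction is then:
\begin{itemize}
\item Root each layer path $P$ at an endpoint $u_P$, and let $Y_w$ consist of $w$ and its successor on $P$.
\item Charge each child component as a whole to a single vertex: to $u_P$ if $K=\{u_P\}$, and to $w$ if $K=\{v\}$ or $K=\{v,w\}$ where $w$ is the predecessor of $v$.
\item Let $B_x$ be $Y_x$ together with everything charged to $x$, indexed by the charging tree.
\end{itemize}
Each vertex $x$ then lies only in $B_x$, in $B_z$ for its predecessor $z$, and in $B_y$ for the vertex $y$ it is charged to. These three bags are connected because $x$ and $z$ are both charged to $y$. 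Moreover $B_w\subseteq N_G[w]\cup N_G[w']$ for the successor $w'$ of $w$, so $|B_w|\leq 2\Delta$.

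Thus each bag lies inside the closed neighbourhoods of two adjacent vertices, and each vertex shares a $Y$-set with only one layer-neighbour. This is exactly what your sibling-joining and merging cannot guarantee.
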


\begin{proof}
We may assume that $G$ is connected. Fix a vertex $r$ of $G$. For each $i\geq0$, let $V_{i}:=\{v\in V(G):\dist_{G}(v,r)=i\}$. So $V_{0}=\{r\}$.

For $i\geq1$, consider a component $H$ of $G[V_{i}]$. It is a well-known that the class of outerplanar graphs is minor-closed, and that $K_4$ and $K_{2,3}$ are not outerplanar, hence $K_4$ and $K_{2,3}$ are not minors of $G$. If $H$ contains a cycle $C$, then contracting $C$ to $K_{3}$ and contracting $G[V_{0}\cup\cdots\cup V_{i-1}]$ to a vertex gives a $K_{4}$-minor in $G$, a contradiction. So $H$ is a tree. If $\Delta(H)\geq3$, then contracting $G[V_{0}\cup\cdots\cup V_{i-1}]$ to a vertex gives a $K_{2,3}$-minor in $G$, a contradiction. Hence $H$ is a path, implying $G[V_{i}]$ is a disjoint union of paths. 

Let $K$ be the set of vertices in $V_{i-1}$ with at least one neighbour in $H$. We say $H$ is a \defn{child component} of $K$. Note that $H$ is dominated by $K$. Since $G$ is connected, $K\not=\varnothing$. We claim that $K$ is a clique with $|K|\in\{1,2\}$. If $i=1$, then $K=\{r\}$, so we may assume that $i\geq2$. Suppose for a contradiction that there are distinct $x,y\in K$ with $xy\not\in E(G)$. Let $P$ be a shortest path in $G[V_{0}\cup\cdots\cup V_{i-2}]$ between $N_{G}(x)\cap V_{i-2}$ and $N_{G}(y)\cap V_{i-2}$, and let $Q$ be a shortest path in $H$ between $N_{G}(x)\cap V(H)$ and $N_{G}(y)\cap V(H)$. Since $xy\not\in E(G)$ and there are no edges in $G$ between $P$ and $Q$, this gives a chordless cycle of length at least 4 in $G$, a contradiction. So $K$ is a clique. Furthermore, $|K|\leq2$, as otherwise contracting $H$ to a vertex gives a $K_{4}$-minor in $G$, contradicting the outerplanarity of $G$.

For each $i\geq0$, since each component $P$ of $G[V_{i}]$ is a path, we can choose one of the endpoints $u_{P}$ of $P$ and view $P$ as a path rooted at $u_{P}$. 
For each $w\in V(P)$, let $Y_{w}$ be the set containing $w$ and (if it exists) the child of $w$. Note that $Y_{w}$ is well-defined since $w$ is in exactly one component of exactly one $G[V_{i}]$. 
Charge each vertex in each child component of $\{u_{P}\}$ to $u_{P}$, and for each edge $vw$ of $P$, where $w$ is the parent of $v$, charge each vertex in each child component of $\{v,w\}$ or $\{v\}$ to $w$. Then each vertex in $V(G)\setminus\{r\}$ is charged to exactly one other vertex of $G$, and $r$ is not charged to any vertex.

It is clear that the graph $T$ with vertex set $V(T)=V(G)$ and edge set $E(T)=\{vw:v\text{ is charged to }w\text{, or }w\text{ is charged to }v\}$ is a tree. For each $x\in V(T)$, let $B_{x}:=\{z:z\text{ is charged to }x\}\cup Y_{x}$. We claim that $\mathcal{T}:=(B_{x}:x\in V(T))$ is a tree-decomposition of $G$ with width at most $2\Delta-1$ and spread at most $3$. Note that $r$ only appears in the bag $B_{r}$. 
Let $x\in V(G-r)$. Then $x$ is charged to exactly one vertex, say $y$. Then $xy\in E(T)$. Let $i\geq1$ such that $x$ is in a component $P$ of $G[V_{i}]$. Then $x$ is only in the bags in $\{B_y\}\cup \{B_w : w\in V(P), x\in Y_w\}$. Suppose $x$ is the root vertex $u_P$ of $P$, then $\{w\in V(P): x\in Y_w\}=\{x\}$, so $x$ is only in the bags $B_x$ and $B_y$. Otherwise, $x$ has a parent in $P$, say $z$. Then $\{w\in V(P) : x\in Y_w\} = \{x,z\}$, so $x$ is only in the bags $B_x, B_y, B_z$. Moreover, since $x$ is charged to $y$, every vertex in $V(P)$ is charged to $y$, so $yz\in E(T)$, and $\{x,y,z\}$ is connected in $T$. This shows the vertex-property and the spread is at most $3$. Let $xy\in E(G)$. Then $x\in V_{i}$ and $y\in V_{j}$ for some $i\geq j\geq0$. Let $P$ be the component of $G[V_{j}]$ containing $y$. If $i=j$, then $xy\in E(P)$; say $y$ is the parent of $x$ in $P$ without loss of generality. Then $x\in Y_{y}$, implying $\{x,y\}\subseteq B_{y}$. Otherwise, $i=j+1$. If $y=u_{P}$, then $x$ is charged to $y$, implying $\{x,y\}\subseteq B_{y}$. If $y\not=u_{P}$, then $y$ has a parent $z$ in $P$, and $x$ is charged to $z$, implying $\{x,y\}\subseteq B_{z}$. This shows the edge-property.
It remains to check the width. Let $x\in V(T)$. Then $Y_{x}$ is a clique in $G$ with $|Y_{x}|\in\{1,2\}$ and $B_{x}\subseteq N_{G}(Y_{x}) \cup  Y_{x}$. If $|Y_{x}|=1$ then $|B_{x}|\leq\Delta+1\leq2\Delta$. If $|Y_{x}|=2$, then $Y_{x}=\{x,y\}$ for some $y\in V(G)$. Since $B_x \subseteq N_G[x]\cup N_G[y]$, $|B_x| \leq 2(\Delta+1) - 2=2\Delta$.
Hence $\mathcal{T}$ is a desired tree-decomposition of $G$.
\end{proof}

\cref{OuterplanarChordalCompletion,EdgeMaximalOuterplanar} imply that every outerplanar graph $G$ with maximum degree $\Delta\geq 1$ has a tree-decomposition with width at most $6\Delta(G)-5$ and spread at most 3, which proves \cref{OuterplanarUpperBound}. 

Now consider general graphs of given treewidth and maximum degree (\cref{LinearWidthBoundedSpread} from \cref{Intro}). As in the outerplanar case, it is helpful to consider chordal completions. An orientation of a graph $G$ is \defn{simplicial} if it is acyclic, and for each $v\in V(G)$ the in-neighbourhood of $v$ is a clique. It is well-known that a graph has a simplicial orientation if and only if it is chordal~\citep{Rose70}.

\begin{lem}
\label{kTree}
Let $G$ be a chordal graph with a fixed simplicial orientation. For each source vertex $s$ in $G$, there is a tree-decomposition of $G$ with width at most $\Delta^+(G)$, such that each vertex $v$ in $G$ has spread at most $2^{\deg^-_G(v)}$, and some bag equals $N_G[s]$.
\end{lem}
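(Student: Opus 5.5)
I would prove \cref{kTree} by induction on $|V(G)|$, removing a sink of the simplicial orientation at each step. Keeping the in-degrees fixed while the structure grows "downwards" from $s$ is what should produce the $2^{\deg^-_G(v)}$ spread bound.

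\textbf{Setup and width.} Fix a topological ordering $v_1=s,v_2,\dots,v_n$ of the acyclic orientation; since $s$ is a source we may put it first. Consider a sink $t=v_n\neq s$ and let $K:=N^-_G(t)$, which is a clique because the orientation is simplicial. The first step is the width observation: if $u$ is the earliest vertex of $K$ in the ordering, then $K\setminus\{u\}\cup\{t\}\subseteq N^+_G(u)$. Hence $|K\cup\{t\}|\leq\Delta^+(G)+1$, so a bag equal to $K\cup\{t\}$ is admissible. Similarly $N_G[s]=\{s\}\cup N^+_G(s)$ has size at most $\Delta^+(G)+1$. The base case ($G=\{s\}$, or more generally $G=G[N_G[s]]$) is the single bag $N_G[s]$.

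\textbf{Inductive step.} Apply induction to $G-t$ with the same source $s$. This gives a tree-decomposition with a bag equal to $N_{G-t}[s]$ in which every vertex has spread at most $2^{\deg^-_{G-t}(v)}=2^{\deg^-_G(v)}$ for $v\neq t$. If $t\in N(s)$, add $t$ to that bag, so that it becomes $N_G[s]$. Since $K$ is a clique, some bag $B_x$ contains $K$. The naive step of attaching a new leaf $K\cup\{t\}$ at $x$ gives $t$ spread $1$, but it increases the spread of every $u\in K$ by one for each out-neighbour sink attached this way. The spread would then be bounded by out-degrees rather than in-degrees, which is not what we want.

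\textbf{Main obstacle: rebalancing to make spread depend on in-degree.} This is the step I expect to be hardest. I would not induct one sink at a time. Instead I would build the decomposition top-down along the ordering, maintaining the invariant that each vertex $v$ occupies a subtree $T_v$ that is ``owned'' by its in-neighbours. Concretely, $v$ is placed in the bag created for its last in-neighbour $w$, since $N^-(v)\setminus\{w\}\subseteq N^-(w)\cup\{w\}$ because $N^-(v)$ is a clique. The vertex $v$ must then also be inserted into every bag on the tree-path joining the bags of its other in-neighbours. I would prove, by induction on $\deg^-(v)$, that these paths force at most a doubling of the number of bags per in-neighbour. Each in-neighbour $u$ of $v$ has $T_u$ of size at most $2^{\deg^-(u)}$ with $\deg^-(u)<\deg^-(v)$ within the clique $N^-(v)$, and the union of the relevant copies then gives $|T_v|\leq 1+\sum_{j<\deg^-(v)}2^{j}=2^{\deg^-(v)}$. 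Simultaneously I must check that inserting $v$ into intermediate bags never pushes a bag beyond the clique $N^+[u]$ of its owner $u$, which keeps the width at most $\Delta^+(G)$. The root bag is $N_G[s]$ by construction.
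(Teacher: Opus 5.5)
Your width observations are correct, and you are right to abandon sink-by-sink induction. The gap is in the last paragraph, which carries the whole lemma. It never specifies the bags or the tree, and its two key claims are asserted rather than proved.

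Read as written, each vertex $u$ owns one bag $B_u\subseteq N^+_G[u]$, and $v$ goes into the bag of its last in-neighbour and into every bag on the tree-paths to the bags of its other in-neighbours. Under that reading the width claim is false. Let $G$ be a $K_4$ on $s,a,b,c$, oriented transitively $s\to a\to b\to c$, together with $m$ new vertices with in-neighbourhood $\{a,b\}$, $m$ with $\{b,c\}$ and $m$ with $\{a,c\}$. This orientation is simplicial, $s$ is a source, and $\Delta^+(G)=2m+2$. In any tree, the three paths between $B_a$, $B_b$, $B_c$ pass through a common (median) node. Your rule therefore puts all $3m$ new vertices into one bag, which exceeds $\Delta^+(G)+1=2m+3$ once $m\geq 4$. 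So some vertices must own several bags, and you need an actual rule that creates them.

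The spread count is unsupported for the same reason. Nothing shows that the bags containing $v$ are controlled by the subtrees $T_u$ of its in-neighbours. Even if they were, $|T_u|\leq 2^{\deg^-_G(u)}$ is the wrong bound, because $\deg^-_G(u)$ can exceed $\deg^-_G(v)$: in-neighbours of $u$ need not be adjacent to $v$. The identity $1+\sum_{j<d}2^j=2^d$ needs the $i$-th in-neighbour of $v$ to contribute at most $2^{i-1}$ bags. That is a count governed by in-degree \emph{inside} $N^-_G(v)$, and your sketch has no mechanism that localises in-degrees in this way.

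The missing idea is the paper's recursion outward from the bag $N_G[s]$. For each component $X$ of $G-N_G[s]$, let $C_X\subseteq N_G(s)$ be the set of vertices with a neighbour in $X$. By chordality (with $G$ connected) $C_X$ is a non-empty clique, and $X$ is assigned to the source $v$ of $C_X$. Let $G_v$ be the subgraph induced by the union of $V(X)\cup C_X$ over all $X$ assigned to $v$. Every edge between $y\in C_X$ and $X$ is oriented into $X$, since $s\in N^-_G(y)$ and $s$ has no neighbour in $X$. Hence $v$ is a source of $G_v$.

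Apply induction to each $G_v$ with source $v$, and make its bag $N_{G_v}[v]$ adjacent to the new bag $N_G[s]$. No path-filling is needed, because every vertex of $C_X$ lies in both of these bags. A vertex may be the source of many recursive subgraphs, and that is exactly where the duplicated bags come from.

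For the spread, take $w\in N_G(s)$ with $N^-_G[w]=\{v_0=s,v_1,\dots,v_d=w\}$ listed in orientation order. Then $w$ lies only in the $G_{v_i}$ with $i\geq 1$, and no $v_j$ with $j<i$ belongs to $G_{v_i}$, so $w$ has in-degree at most $d-i$ there. Thus its spread is at most $1+\sum_{i=1}^{d}2^{d-i}=2^d$. Every vertex outside $N_G[s]$ lies in exactly one $G_v$, with the same in-degree as in $G$.
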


\begin{proof}
We proceed by induction on $|V(G)|$. In the base case, if $V(G)=N_G[s]$, then the tree-decomposition of $G$ with one bag $V(G)$ satisfies the claim since $2^0=1$. 
Now assume that $V(G)\setminus N_G[s]\neq \emptyset$. We may assume that $G$ is connected. 

Consider a component $X$ of $G-N_G[s]$. Let $C_X$ be the set of vertices in $N_G(s)$ adjacent to some vertex in $X$. Since $G$ is chordal and connected, $C_X$ is a non-empty clique. Say $X$ \defn{belongs} to the source vertex of $C_X$. 

For each vertex $v\in N_G(s)$, let $G_v$ be the graph induced by the union, taken over all components $X$ of $G-N_G[s]$ that belong to $v$, of $V(X)\cup C_X$. Let $G_v$ inherit its $k$-simplicial orientation from $G$. Note that if $xy$ is an edge of $G$ with $x\in V(X)$ and $y\in C_X$, then since the in-neighbourhood of $y$ is a clique and $sx\not\in E(G)$, it must be that $xy$ is oriented towards $x$. In particular, $v$ is a source of $G_v$. 

For each $v\in N_G(s)$, apply induction to $G_v$ with source $v$. We obtain a tree-decomposition $\mathcal{T}_v$ of $G_v$ with width at most $\Delta^+(G_v)\leq\Delta^+(G)$, such that each vertex $x$ in $G_v$ has spread at most $2^{\deg^-_{G_v}(x)}$, 
and some bag equals $N_{G_v}[v]$. 

We may assume that the trees indexing $(\mathcal{T}_v:v\in N_G(s))$ are pairwise disjoint. Add a new bag $N_G[s]$ adjacent to the bag $N_{G_v}[v]$ of $G_v$, for each $v\in N_G(s)$. We obtain a tree-decomposition $\mathcal{T}$ of $G$ with width at most $\max\{\Delta^+(G),|N_G[s]|-1\}=\Delta^+(G)$, in which some bag equals $N_G[s]$. 

We now bound the spread. By construction, $s$ has spread $1=2^0=2^{\deg^-(s)}$. 
Each vertex $x\in V(G)\setminus N_G[s]$ is in exactly one subgraph $G_v$ (with $v\in N_G(s)$), so $x$ has spread at most 
$2^{\deg^-_{G_v}(x)}=2^{\deg^-_G(x)}$. 

Consider a vertex $w\in N_G(s)$. Let $N^-_G[w]=\{v_0,v_1,\dots,v_d\}$ be the closed in-neighbourhood of $w$, where $v_iv_j$ is oriented from $v_i$ to $v_j$ whenever $i<j$. So $v_0=s$ and $v_{d}=w$ and $d=\deg_G^-(w)$. 
If $w\in V(G_v)$ for some $v\in N_G(s)$, then $w\in C_X$ for some component $X$ of $G-N_G[s]$ that belongs to $v$, implying that $v\in\{v_1,\dots,v_d\}$ (since $v\neq s$). 
Moreover, $w$ has indegree at most $d-i$ in $G_{v_i}$. 
By induction, $w$ has spread at most 
$2^{d-i}$ in $\mathcal{T}_{v_i}$. 
So $w$ has spread at most 
$1+\sum_{i=1}^{d}2^{d-i}=2^d=2^{\deg^-_G(w)}$ in $\mathcal{T}$, as claimed. 
\end{proof}

We have the following precise version of \cref{LinearWidthBoundedSpread}. 

\begin{thm}
\label{LinearWidthBoundedSpreadPrecise}
Every graph $G$ with treewidth $k$ and maximum degree $\Delta$ has a tree-decomposition with width at most $k(\Delta+1)$, such that each vertex in $G$ has spread at most $2^{14(k+1)}$.
\end{thm}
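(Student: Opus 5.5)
The plan is to combine \cref{ChordalCompletionMaxDegree} with \cref{kTree}. Apply \cref{ChordalCompletionMaxDegree} to $G$ to obtain a chordal completion $G'$ with treewidth at most $14k+13$ (so $\omega(G')\leq 14k+14=14(k+1)$) and maximum degree at most $k(\Delta+1)$. Any tree-decomposition of $G'$ is also one of $G$ (since $G$ is a spanning subgraph), and spreads are unchanged. So it suffices to build a suitable tree-decomposition of $G'$.

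Since $G'$ is chordal, fix a simplicial orientation of $G'$ (Rose). For each vertex $v$, the closed in-neighbourhood $N^-[v]$ is a clique, so $\deg^-_{G'}(v)\leq \omega(G')-1\leq 14k+13 < 14(k+1)$. Also $\Delta^+(G')\leq \Delta(G')\leq k(\Delta+1)$.

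We may treat each component of $G'$ separately and join the resulting tree-decompositions by arbitrary edges between their trees; this keeps both width and spread unchanged. In a connected component, the orientation is acyclic, so a source $s$ exists. Apply \cref{kTree} with this source. This yields a tree-decomposition with width at most $\Delta^+(G')\leq k(\Delta+1)$, in which each vertex $v$ has spread at most
\[2^{\deg^-_{G'}(v)}\leq 2^{14k+13}\leq 2^{14(k+1)}.\]

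The only point needing care is the width bound. In \cref{kTree} the width is $\Delta^+$, and the bag $N_G[s]$ has size $\deg(s)+1=\Delta^+(s)+1$, which is consistent with width $\Delta^+$. We therefore need $\Delta^+(G')\leq k(\Delta+1)$, which follows from the maximum degree bound in \cref{ChordalCompletionMaxDegree}. The other step to check is the in-degree bound via cliques: $N^-[v]$ is a clique of size $\deg^-(v)+1\leq\omega(G')$. No real obstacle is expected beyond verifying that \cref{kTree} applies to each component with its own source.
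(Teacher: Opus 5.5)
Your proposal is correct and follows the paper's proof: apply \cref{ChordalCompletionMaxDegree} to get a chordal completion $G'$ with $\omega(G')\leq 14(k+1)$ and $\Delta(G')\leq k(\Delta+1)$, then apply \cref{kTree}. The extra details you supply are steps the paper leaves implicit: the bound $\deg^-_{G'}(v)\leq\omega(G')-1$ from the in-neighbourhood clique, and treating components separately with a source in each. They also give the marginally sharper spread bound $2^{14k+13}$.
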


\begin{proof}
By \cref{ChordalCompletionMaxDegree}, there is a chordal completion $G'$ of $G$ with $\omega(G')\leq 14(k+1)$ and maximum degree at most $k(\Delta+1)$. \cref{kTree} implies that $G'$ and thus $G$ has a tree-decomposition with width at most $k(\Delta+1)$, such that each vertex has spread at most $2^{\omega(G')}\leq 2^{14(k+1)}$. 
\end{proof}


We finish with an open problem. \cref{LinearWidthBoundedSpread} says that every graph $G$ with treewidth $k$ and maximum degree $\Delta$ has a tree-decomposition with width $O_k(\Delta)$, such that each vertex has spread at most $f(k)$, where $f(k)\in 2^{O(k)}$. What is the least function $f$ in such a result? Our results do not exclude the possibility that $f(k)$ is at most some absolute constant, possibly 3.

{\fontsize{11pt}{12pt}\selectfont
\def\soft#1{\leavevmode\setbox0=\hbox{h}\dimen7=\ht0\advance \dimen7
  by-1ex\relax\if t#1\relax\rlap{\raise.6\dimen7
  \hbox{\kern.3ex\char'47}}#1\relax\else\if T#1\relax
  \rlap{\raise.5\dimen7\hbox{\kern1.3ex\char'47}}#1\relax \else\if
  d#1\relax\rlap{\raise.5\dimen7\hbox{\kern.9ex \char'47}}#1\relax\else\if
  D#1\relax\rlap{\raise.5\dimen7 \hbox{\kern1.4ex\char'47}}#1\relax\else\if
  l#1\relax \rlap{\raise.5\dimen7\hbox{\kern.4ex\char'47}}#1\relax \else\if
  L#1\relax\rlap{\raise.5\dimen7\hbox{\kern.7ex
  \char'47}}#1\relax\else\message{accent \string\soft \space #1 not
  defined!}#1\relax\fi\fi\fi\fi\fi\fi}

}
\end{document}